\numberwithin{equation}{section}
\theoremstyle{plain}
\newtheorem{thm}{Theorem}[section]
\newtheorem{lem}[thm]{Lemma}
\newtheorem{prop}[thm]{Proposition}
\newtheorem{cor}[thm]{Corollary}
\newtheorem{ex}[thm]{Example}
\DeclareMathOperator{\oddsr}{or}
\newcommand{\E}{\mathbb{E}}
\renewcommand{\P}{\mathbb{P}}
\newcommand{\R}{\mathbb{R}}
\newcommand{\cX}{\mathcal{X}}
\newcommand{\cZ}{\mathcal{Z}}
\newcommand{\indic}{{1\!\!1}}
\newcommand{\mtp}{{\rm MTP}_{2}}
\newcommand{\tp}{{\rm TP}_{2}}
\newcommand{\cov}{{\rm cov}}
\newcommand{\cd}{\,|\,}
\newcommand{\cip}{\mbox{\,$\perp\!\!\!\perp$\,}}
\newcommand{\indep}{\cip}
\newcommand{\gse}{\,\mbox{$\perp_{G}$}\,}
\newcommand{\dd}{\mathrm{d}}
\newcommand\notindependent{\!\perp\!\!\!\!\not\perp\!}
\begin{document}

\begin{frontmatter}
\title{Total positivity in Markov structures\thanksref{T1}}
\runtitle{Total positivity in Markov structures}
\thankstext{T1}{SF was supported in part by an NSERC Discovery Research Grant, KS by grant \#FA9550-12-1-0392 from the U.S. Air Force Office of Scientific Research (AFOSR) and the Defense
Advanced Research Projects Agency (DARPA), CU by the Austrian Science Fund (FWF) Y 903-N35, and PZ by the European Union Seventh Framework Programme PIOF-GA-2011-300975.}

\begin{aug}
\author{\fnms{Shaun} \snm{Fallat}\thanksref{m1}\ead[label=e1]{shaun.fallat@uregina.ca}},
\author{\fnms{Steffen} \snm{Lauritzen}\thanksref{m2}\ead[label=e2]{lauritzen@math.ku.dk}}
\author{\fnms{Kayvan} \snm{Sadeghi}\thanksref{m3}
\ead[label=e3]{k.sadeghi@statslab.cam.ac.uk}}
\author{\fnms{Caroline} \snm{Uhler}\thanksref{m4}
\ead[label=e4]{cuhler@mit.edu}}
\author{\fnms{Nanny} \snm{Wermuth}\thanksref{m5}
\ead[label=e5]{wermuth@chalmers.se}}
\and
\author{\fnms{Piotr} \snm{Zwiernik}\thanksref{m6}
\ead[label=e6]{piotr.zwiernik@upf.edu}}

\runauthor{S. Fallat et al.}

\affiliation{University of Regina\thanksmark{m1}, University of Copenhagen\thanksmark{m2}, University of Cambridge\thanksmark{m3}, Massachusetts Institute of Technology\thanksmark{m4}, IST Austria\thanksmark{m4}, Chalmers University of Technology\thanksmark{m5}, Johannes Gutenberg-University\thanksmark{m5}, and Universitat Pompeu Fabra\thanksmark{m6}}

\address{Department of Mathematics and Statistics\\
University of Regina\\
Regina, SK, Canada\\
\printead{e1}}

\address{Department of Mathematical Sciences\\
University of Copenhagen\\
Copenhagen, Denmark\\
\printead{e2}}

\address{Statistical Laboratory\\
University of Cambridge\\
Cambridge, UK\\
\printead{e3}}

\address{Massachusetts Institute of Technology\\
 Cambridge, USA\\
and Institute of Science and Technology Austria\\
Klosterneuburg, Austria\\
\printead{e4}}

\address{Chalmers University of Technology\\
Gothenburg, Sweden\\
and Johannes Gutenberg-University\\
Mainz, Germany\\
\printead{e5}}

\address{Universitat Pompeu Fabra\\
and Barcelona GSE\\
Barcelona, Spain\\
\printead{e6}}
\end{aug}

\begin{abstract}
We discuss properties of distributions that are multivariate totally positive of order two ($\mtp$) related to conditional independence. In particular, we show that any independence model generated by an $\mtp$ distribution is a compositional semigraphoid which is upward-stable and singleton-transitive. In addition, we prove that any $\mtp$ distribution satisfying an appropriate support condition is faithful to its concentration graph. Finally, we analyze factorization properties of $\mtp$ distributions and discuss ways of constructing $\mtp$ distributions; in particular we give conditions on the log-linear parameters of a discrete distribution which ensure $\mtp$ and characterize conditional Gaussian distributions which satisfy $\mtp$.
\end{abstract}

\begin{keyword}[class=MSC]
\kwd[Primary ]{60E15}
\kwd{62H99}
\kwd[; secondary ]{15B48}
\end{keyword}

\begin{keyword}
\kwd{Association}
\kwd{concentration graph}
\kwd{conditional Gaussian distribution}
\kwd{faithfulness}
\kwd{graphical models}
\kwd{log-linear interactions}
\kwd{Markov property}
\kwd{positive dependence}
\end{keyword}

\end{frontmatter}

\section{Introduction}
\label{sec:introduction}

This paper discusses a special form of positive dependence. \emph{Positive dependence} may refer to two random variables that have a positive covariance,  but other definitions of positive dependence have been proposed as well; see \cite{lehmann1966} for an overview. Random variables $X=(X_1,\ldots,X_d)$ are said to be \emph{associated} if ${\rm cov}\{f(X),g(X)\}\geq 0$ for any two non-decreasing functions $f$ and $g$ for which  $\E|f(X)|$,  $\E|g(X)|$, and $\E|f(X)g(X)|$ all exist \citep{esary1967association}. This notion has important applications in probability theory and statistical physics; see, for example,~\cite{newman1983general,Newman1984}.

However, association may be difficult to verify in a specific context. The celebrated FKG theorem, formulated by Fortuin, Kasteleyn, and Ginibre in \cite{fortuin1971correlation}, introduces an alternative notion and establishes that $X$ are  associated if their joint density function is \textit{multivariate totally positive of order~2}: A function $f$ over $\cX=\prod_{v\in V}\cX_{v}$, where each $\cX_v$ is totally ordered, is \emph{multivariate totally positive of order two} ($\mtp$) if
$$
f(x)f(y)\quad\leq \quad f(x\wedge y)f(x\vee y)\qquad\mbox{for all }x,y\in \cX,
$$
where $x\wedge y$ and $x\vee y$ denote the element-wise minimum and maximum.

These inequalities are often easier to check. Furthermore, most other known definitions of positive dependence are implied by the $\mtp$ constraints; see for example~\cite{colangelo2005some} for a recent overview. Note that the conditions are on the probabilities or the density and not on other types of traditional measures of dependence. But as we shall see, the above inequality constraints combined with conditional independence restrictions specify positive associations along edges in undirected graphs, named and studied as dependence graphs or concentration graphs; see for instance~\cite{lau96, W15}.

$\mtp$ distributions have also played an important role in the study of ferromagnetic Ising models, i.e.~distributions of binary variables where all interaction potentials are pairwise and non-negative. It has been noted in \cite{propp1996exact} that the block Gibbs sampler is monotonic if the target distribution is $\mtp$, and hence particularly efficient in this setting. Bartolucci and Besag~\cite[Section 5]{bartolucci2002recursive} showed that much of this work can in fact be extended to arbitrary binary Markov fields. See also \cite{djolonga2015scalable} for an optimization viewpoint. The special case of Gaussian distributions was studied by Karlin and Rinott~\cite{karlinGaussian} and very recently by Slawski and Hein~\cite{slawski2015estimation} from a machine learning perspective.

Consequences of $\mtp$ distributions with respect to marginal and mutual independences
were studied by Lebowitz \cite{lebowitz1972bounds} and Newman \cite{Newman1984}. They showed in particular that independence of two components of a random vector with an $\mtp$ distribution, is equivalent to a block-diagonal structure in the covariance matrix and
that mutual independence of several components can be inferred from a block-diagonal
covariance matrix (see also Theorem~\ref{th:newman} and Theorem~\ref{th:block} below). This is remarkable because covariances and correlations are the weakest types of measures of dependence, see \cite{XMG}; although they identify independence in Gaussian distributions,
this is often not the case for other types of distribution. 

In this paper, we discuss implications of the $\mtp$ constraints for conditional independence and vice versa. There is some related work in the context of copulae \cite{muller2005archimedean}. Our paper can be seen as a continuation of work by Sarkar~\cite{sarkar} and, in particular, by Karlin and Rinott~\cite{KarlinRinott80, karlinGaussian}. They noted that the family of $\mtp$ distributions is stable with respect
to forming marginal and conditional distributions. At least as important is that they give
constraints on different types of measures of dependence needed to verify the $\mtp$ property of a joint distribution for discrete and for Gaussian random variables.

The $\mtp$ property may appear extremely restrictive when higher order interactions
are needed to capture the relevant types of conditional dependence or when distributions
are studied which do not satisfy any conditional independence constraints. However, as
we shall see, the $\mtp$ constraints become less restrictive when imposing an additional Markov structure. For example, all finite dimensional distributions of a Markov chain are $\mtp$ whenever all $2\times 2$ minors of its transition matrix are non-negative~\cite[Proposition 3.10]{KarlinRinott80}. This result holds true also for non-homogeneous Markov chains. Moreover, models with latent, that is hidden or unobserved, variables may be $\mtp$. For example, factor analysis models with a single factor are $\mtp$ when each observed variable has an, albeit unobserved, positive dependence on the single, hidden factor~\cite{wermuth2014star}. Similar statements apply to binary latent class models~\cite{ARSZ, holland1986, wermuth2014star} and to latent tree models, both in the Gaussian and in the binary setting~\cite{Steel20091141, LTbook}. Furthermore, many data sets can be well explained or modelled assuming that the generating distribution is $\mtp$ or nearly $\mtp$; see Section~\ref{sec:examples} for some examples and also the discussion in Section~\ref{sec:discussion}.

The paper is organized as follows: In Section~\ref{sec:notation} we introduce our notation and provide the main definitions. In Section~\ref{sec:MTP} we review basic properties of $\mtp$ distributions and discuss the link to positive dependence and independence structures. In Section~\ref{sec:examples} we concentrate on the $\mtp$ condition in the Gaussian and binary setting and provide several data examples where the $\mtp$ property appears naturally. Section~\ref{sec:CIMTP} analyzes $\mtp$ distributions with respect to conditional independence relations. One of the main results in this paper is Theorem~\ref{prop:intersection}, which shows that any independence model generated by an $\mtp$ distribution is a  singleton-transitive compositional semigraphoid which is also upward-stable;  the latter means that new arbitrary elements can be added to the conditioning set of every existing independence statement without violating independence. Theorem~\ref{th:block} gives a complete characterization of the marginal independence structures of $\mtp$ distributions. In Section~\ref{sec:Faithful}, we study Markov properties of $\mtp$ distributions and show that such distributions are always faithful to their concentration graph. In Section~\ref{sec:factorization}, we analyze factorization properties of $\mtp$ distributions, show how to use these properties to build $\mtp$ distributions from smaller $\mtp$ distributions,  briefly discuss log-linear expansions of discrete $\mtp$ distributions, and give conditions for conditional Gaussian distributions to satisfy the $\mtp$ constraints. We conclude our paper with a short discussion in Section~\ref{sec:discussion}.

\section{Preliminaries and notation}\label{sec:notation}
Let $V$ be a finite set and let $X=(X_v,v\in V)$ be a  random vector. We consider the product space $\cX=\prod_{v\in V}\cX_{v}$, where $\cX_v\subseteq \R$ is the state space of $X_v$, inheriting the order from $\R$.
In this paper, the state spaces are either discrete (finite sets) or open intervals on the real line. Hence, we can partition the set of variables as $V=\Delta\cup\Gamma$, where $\cX_{v}$ is discrete if $v\in \Delta$, and $\cX_{v}$ is an open interval if  $v\in \Gamma$. For $A\subseteq V$ we further write $X_A=(X_v)_{v\in A}$, $\cX_A=\times_{v\in A}\cX_v$ and so on.

All distributions are assumed to have densities with respect to the product measure $\mu=\otimes_{v\in V}\mu_v$,  where $\mu_v$ is the counting measure for $v\in\Delta$, and $\mu_v$ is the Lebesgue measure giving length 1 to the unit interval for $v\in \Gamma$. We shall refer to $\mu$ as the \emph{standard base measure}. Similarly to the above, we write $\mu_A=\otimes_{v\in A}\mu_v$.

Finally, we introduce some definitions related to graphs: An \emph{undirected graph} $G = (V, E)$ (sometimes also called \emph{concentration graph} in the literature of graphical models) consists of a nonempty set of \emph{vertices} or \emph{nodes} $V$ and a set of undirected edges $E$. Our graphs are \emph{simple} meaning that they have no self-loops and no multiple edges. We write $uv$ for an edge between $u$ and $v$ and say that the vertices $u$ and $v$ are \emph{adjacent}.  A \emph{path} in $G$ is a sequence of nodes $(v_0,v_1,\dots, v_k)$ such that $v_iv_{i+1}\in E$ for all $i=0,\dots, k-1$ and no node is repeated, i.e., $v_i\neq v_j$ for all $i,j\in\{0,1,\dots , k\}$ with $i\neq j$. Thus an edge is the shortest type of path. A \emph{cycle} is a path with the modification that $v_0=v_k$. Furthermore, we say that two distinct nodes $u, v\in V$ are \emph{connected} if there is a path between $u$ and $v$; a graph is \emph{connected} if all pairs of distinct nodes are connected. A graph is \emph{complete} if all possible edges are present. In addition, two subsets $A,B\subseteq V$ are \emph{separated} by $S\subset V\setminus(A\cup B)$ if every path between $A$ and $B$ passes through a node in $S$. A subgraph of $G$, \emph{induced} by a set $A\subset V$, consists of the nodes in $A$ and of the edges in $G$ between nodes in $A$. Finally, a maximal complete subgraph is a \emph{clique}.

\section{Basic properties and positive dependence}\label{sec:MTP}

We start this section by formally introducing $\mtp$ distributions and  discuss basic properties of these. We define the coordinate-wise minimum and maximum as
$$x\wedge y=(\min(x_{v},y_{v}),v\in V),\quad x\vee y=(\max(x_{v},y_{v}),v\in V).$$
 A function $f$ on $\cX$ is said to be \emph{multivariate totally positive of order two} ($\mtp$) if
\begin{equation}\label{eq:MTP2}
f(x)f(y)\quad\leq \quad f(x\wedge y)f(x\vee y)\qquad\mbox{for all }x,y\in \cX.
\end{equation}
For $|V|=2$, a function that is $\mtp$ is simply called \emph{totally positive of order two} ($\tp$)~\cite{KarlinRinott80}. Let $X=(X_v,v\in V)$ have density function $f$ with respect to the standard base measure $\mu$. Then we say that $X$ or the distribution of $X$ is $\mtp$ if its density function $f$ is $\mtp$. Note that this concept is well-defined since $\cX_v$ is either discrete or an open interval on the real line.

A basic property of $\mtp$ distributions is that it is preserved under increasing coordinatewise transformations. We begin with a simple result for strictly increasing functions.

\begin{prop}\label{prop:differentiable}
Let $X$ be a random vector taking values in $\cX$. Let $\phi=(\phi_{v},v\in V)$ be such that  $\phi_{v}:\R  \to \R$ are strictly increasing and differentiable for all $v\in V$. If the distribution of $X$ on $\mathcal{X}$ is $\mtp$, then the distribution of $Y=\phi(X)$  is $\mtp$.
\end{prop}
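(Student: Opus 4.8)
The plan is to push the $\mtp$ property through the change of variables defining $Y=\phi(X)$. Since each $\phi_{v}\colon\R\to\R$ is strictly increasing and differentiable, it is a continuous bijection from $\R$ onto an open interval, its inverse $\psi_{v}=\phi_{v}^{-1}$ is continuous and strictly increasing, and, being monotone, $\psi_{v}$ is differentiable $\mu_{v}$-almost everywhere with $\psi_{v}'\ge 0$. Set $\psi=(\psi_{v},v\in V)$ and $\cY=\phi(\cX)=\prod_{v\in V}\phi_{v}(\cX_{v})$, a product of discrete sets (for $v\in\Delta$) and open intervals (for $v\in\Gamma$), ordered by the order on $\R$. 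By the change-of-variables formula, the density $f_{Y}$ of $Y$ with respect to the standard base measure on $\cY$ satisfies, for $\mu$-almost every $y\in\cY$,
\[
  f_{Y}(y)\;=\;f_{X}\bigl(\psi(y)\bigr)\prod_{v\in\Gamma}\psi_{v}'(y_{v}),
\]
the discrete coordinates contributing no Jacobian factor because there $\mu_{v}$ is counting measure and $\phi_{v}$ merely relabels the ordered set $\cX_{v}$. Write $J(y)=\prod_{v\in\Gamma}\psi_{v}'(y_{v})$ for this Jacobian factor.

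It then remains to verify that both factors $f_{X}\circ\psi$ and $J$ are $\mtp$ on $\cY$, since the pointwise product of two nonnegative $\mtp$ functions is again $\mtp$ (multiply the two defining inequalities, all four quantities being nonnegative). First, $\psi$ is coordinatewise strictly increasing, hence commutes with the lattice operations: $\psi(x\wedge y)=\psi(x)\wedge\psi(y)$ and $\psi(x\vee y)=\psi(x)\vee\psi(y)$. Applying the $\mtp$ inequality for $f_{X}$ at the points $\psi(x),\psi(y)\in\cX$ therefore gives
\begin{align*}
  f_{X}(\psi(x))\,f_{X}(\psi(y))
  &\;\le\; f_{X}\bigl(\psi(x)\wedge\psi(y)\bigr)\,f_{X}\bigl(\psi(x)\vee\psi(y)\bigr)\\
  &\;=\; f_{X}\bigl(\psi(x\wedge y)\bigr)\,f_{X}\bigl(\psi(x\vee y)\bigr),
\end{align*}
so $f_{X}\circ\psi$ is $\mtp$. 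Second, $J$ is a product of nonnegative univariate functions, and any such $J$ satisfies $J(x)J(y)=J(x\wedge y)J(x\vee y)$ with \emph{equality}, because for every $v$ one has $\{(x\wedge y)_{v},(x\vee y)_{v}\}=\{x_{v},y_{v}\}$, hence $\{\psi_{v}'((x\wedge y)_{v}),\psi_{v}'((x\vee y)_{v})\}=\{\psi_{v}'(x_{v}),\psi_{v}'(y_{v})\}$. Multiplying the two displays yields $f_{Y}(x)f_{Y}(y)\le f_{Y}(x\wedge y)f_{Y}(x\vee y)$, which is the assertion.

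The algebra above is immediate; the one point requiring care — and, I expect, the main obstacle in a careful write-up — is the analytic justification of the change-of-variables formula: one must check that $\psi_{v}$ is differentiable $\mu_{v}$-almost everywhere and that the displayed formula for $f_{Y}$ holds a.e., which is precisely what strict monotonicity together with differentiability of $\phi_{v}$ is used for (they ensure that the set on which $\phi_{v}'$ vanishes has image of Lebesgue measure zero, confining the singular behaviour of $\psi_{v}$ to a $\mu$-null set). Since densities — and therefore the $\mtp$ property — are determined only up to $\mu$-null sets, this suffices. Strictness of the $\phi_{v}$ is also what makes $\psi$ a genuine coordinatewise bijection commuting with $\wedge$ and $\vee$; the extension to arbitrary coordinatewise non-decreasing transformations would be handled separately.
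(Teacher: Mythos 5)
Your proof is correct and follows essentially the same route as the paper: the paper simply cites Karlin--Rinott's fact that composing an $\mtp$ function with coordinatewise non-decreasing maps and multiplying by positive univariate factors preserves $\mtp$, taking $b_v=\phi_v^{-1}$ and $a_v=1/\phi_v'(\phi_v^{-1}(y_v))$, whereas you prove that fact inline (increasing maps commute with $\wedge$ and $\vee$; a product of univariate factors satisfies the $\mtp$ inequality with equality). Your extra care about almost-everywhere differentiability of $\psi_v$ is a reasonable refinement of a point the paper glosses over.
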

\begin{proof}
We use the following fact from~\cite[Equation (1.13)]{KarlinRinott80}: Let $a_v:\R\to \mathbb{R}$ be positive and let $b_v:\R\to \mathbb{R}$ be non-decreasing. If $f:\R^{V}\to \R$ is $\mtp$, then the function
\begin{equation}\label{eq:prodtrans}
g(y)=f\{b_{v}(y_{v}),v\in V\}\prod_{v\in V}a_{v}(y_{v}),\end{equation}
is $\mtp$. Let $b_v(y_v)=\phi_v^{-1}(y_v)$ and let $a_v(y_v) = 1/\phi'_v(\phi_v^{-1}(y_v))$, where $\phi'_v(y_v)$ denotes the first derivative of $\phi_v$. Then $g(y)$ is the density of $Y=\phi(X)$ and we obtain from (\ref{eq:prodtrans}) that $Y$ is $\mtp$.
\end{proof}

We say that a function $\phi_v(x):\cX_v\to\R$  is \emph{piecewise constant} if $\phi_v(\cX_v)$ is finite and we can then similarly show that the
$\mtp$ property is preserved under transformations which are piecewise constant and non-decreasing.

\begin{prop}\label{prop:monotone}
Let $X$ be a random vector taking values in  $\cX$ as before. For $A\subseteq V$, let $\phi=(\phi_{v},v\in V)$ be such that  $\phi_{v}:\cX_v \to \R$ is piecewise constant and non-decreasing for all $v\in A$ and $\phi_v(x_v)=x_v$ for $v\not \in A$. If the distribution of $X$ is $\mtp$, then the distribution of $Y=\phi(X)$  is $\mtp$.
\end{prop}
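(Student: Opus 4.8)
The plan is to reduce to the case $|A|=1$ and then realise the transformed density as a marginal of an auxiliary $\mtp$ function. Since the maps $\phi_v$, $v\in A$, act on disjoint coordinates they commute, and applying one of them replaces $\cX_v$ by the finite set $\phi_v(\cX_v)$, which is again an admissible (discrete) state space in the sense of Section~\ref{sec:notation}; so it suffices to prove the single-coordinate case and then iterate over the elements of $A$. Assume then $A=\{v\}$, write $W=V\setminus\{v\}$, let $t_1<\dots<t_m$ be the distinct values of $\phi_v$, and set $I_i=\phi_v^{-1}(t_i)\subseteq\cX_v$. Because $\phi_v$ is non-decreasing, each $I_i$ is an interval of the chain $\cX_v$ and these intervals are ordered consistently with their indices: for $i<j$, every element of $I_i$ lies strictly below every element of $I_j$.

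Next I would introduce an auxiliary coordinate $0$ with chain state space $\cX_0=\{1,\dots,m\}$ and define on $\cX_0\times\cX_v\times\cX_W$ the non-negative function $\tilde f(i,x_v,x_W)=f(x_v,x_W)\,\indic[x_v\in I_{i}]$, and verify that $\tilde f$ is $\mtp$. Given $u=(i,a,y_W)$ and $u'=(j,b,z_W)$, the inequality $\tilde f(u)\tilde f(u')\le\tilde f(u\wedge u')\tilde f(u\vee u')$ is trivial unless its left-hand side is positive, i.e.\ $a\in I_i$ and $b\in I_j$; assuming this and, by symmetry, $i\le j$, the ordering of the level sets forces $a\wedge b\in I_{i}=I_{i\wedge j}$ and $a\vee b\in I_{j}=I_{i\vee j}$, so both indicator factors on the right equal $1$ and the inequality becomes $f(a,y_W)f(b,z_W)\le f(a\wedge b,y_W\wedge z_W)f(a\vee b,y_W\vee z_W)$, which holds because $f$ is $\mtp$.

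Finally, recalling that $\mtp$ is preserved under marginalization (a consequence of the FKG/Ahlswede--Daykin inequalities, noted already by Karlin and Rinott~\cite{KarlinRinott80}), the function
\[
\bar f(i,x_W)=\int_{\cX_v}\tilde f(i,x_v,x_W)\,\dd\mu_v(x_v)=\int_{I_i}f(x_v,x_W)\,\dd\mu_v(x_v)
\]
is $\mtp$ on $\cX_0\times\cX_W$; relabelling the auxiliary coordinate through the order isomorphism $i\mapsto t_i$ (which preserves $\mtp$) turns $\bar f$ into the map $(t_i,y_W)\mapsto\int_{I_i}f(x_v,y_W)\,\dd\mu_v(x_v)$, and this is exactly the density of $Y=\phi(X)=(\phi_v(X_v),X_W)$ with respect to the standard base measure on $\phi_v(\cX_v)\times\cX_W$, so $Y$ is $\mtp$. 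I expect the only non-routine point to be the verification that $\tilde f$ is $\mtp$, together with the underlying lattice-theoretic observation that a non-decreasing piecewise constant $\phi_v$ cuts the chain $\cX_v$ into intervals ordered consistently with their indices, so that once two points are confined to an ordered pair of level sets, forming their coordinatewise minimum and maximum cannot move them out of that pair; the rest is bookkeeping about base measures plus the known stability of $\mtp$ under marginalization. (Alternatively one can avoid the auxiliary coordinate: for $i\neq j$, Fubini and the pointwise $\mtp$ inequality directly bound $\int_{I_i}f(\cdot,y_W)\,\dd\mu_v\cdot\int_{I_j}f(\cdot,z_W)\,\dd\mu_v$, while for $i=j$ one restricts $f$ to the sub-box $I_i\times\cX_W$, which preserves $\mtp$, and then marginalizes out $X_v$.)
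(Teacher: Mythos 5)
Your argument is correct. The paper's own proof is essentially a two-line reduction: it writes the density of $Y$ as $g(y)=\int_{\phi_A^{-1}(y_A)}f(x)\,\dd\mu_A(x_A)$, observes exactly the lattice compatibility you isolate --- preimages under a coordinatewise non-decreasing map satisfy $\phi_A^{-1}(y^1_A)\wedge\phi_A^{-1}(y^2_A)=\phi_A^{-1}(y^1_A\wedge y^2_A)$ and likewise for $\vee$ --- and then invokes Corollary~2.1 of Karlin and Rinott~\cite{KarlinRinott80}, which asserts that such set-indexed integrals of an $\mtp$ function are again $\mtp$. You instead prove the needed instance of that corollary from scratch: the indicator-augmented function $\tilde f(i,x_v,x_W)=f(x_v,x_W)\,\indic[x_v\in I_i]$ is $\mtp$ precisely because the level sets of a non-decreasing $\phi_v$ are consistently ordered (and, in the case $i=j$, because any subset of a chain is closed under pairwise $\wedge$ and $\vee$), and marginalizing out $x_v$ --- a stability property the paper imports from \cite{KarlinRinott80} independently of this proposition, so there is no circularity with Proposition~\ref{prop:ClosedMC}(iii) --- yields the density of $Y$. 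Your one-coordinate-at-a-time reduction is also sound, since after each step the transformed coordinate has a finite totally ordered state space of the kind admitted in Section~\ref{sec:notation}, whereas the paper handles all of $A$ at once. In short, both proofs rest on the same lattice observation about preimages; yours trades the citation of Corollary~2.1 for an explicit, self-contained construction (plus the pointwise shortcut for $i\neq j$ in your alternative), at the cost of some extra bookkeeping.
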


\begin{proof}
If $f$ denotes the density function for $X$ and $g$ the density function for $Y$, both w.r.t.\ a standard base measure, we have that
\[g(y)=\int_{\phi_A^{-1}(y_A)}f(x)\,\dd\mu_A(x_A).\]
 Since $\phi$ is non-decreasing, we have
$$\phi_A^{-1}(y^1_A) \wedge \phi_A^{-1}(y^2_A) = \phi_A^{-1}(y^1_A\wedge y_A^2), \quad \phi_A^{-1}(y_A^1) \vee \phi_A^{-1}(y_A^2) = \phi_A^{-1}(y_A^1\vee y_A^2),$$
where for two sets $A, B$,
$$A \wedge B = \{a\wedge b \mid a\in A, b\in B\}, \quad A \vee B = \{a\vee b \mid a\in A, b\in B\}.$$
Hence, we can apply~\cite[Corollary 2.1]{KarlinRinott80} to obtain that $Y$ is $\mtp$.
\end{proof}

\begin{cor}\label{cor:monotone}
Let $X$ be a random vector taking values in  $\cX$ as before and $\phi=(\phi_{v},v\in V)$ be such that  $\phi_{v}:\cX_v \to \R$ is piecewise constant and non-decreasing for all $v\in A$ and $\phi_v(x_v)$ is strictly increasing and differentiable for $v\not \in A$. If the distribution of $X$ is $\mtp$, then the distribution of $Y=\phi(X)$  is $\mtp$.
\end{cor}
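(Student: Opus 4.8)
The plan is to deduce the corollary by composing the two elementary transformations already treated in Propositions~\ref{prop:differentiable} and~\ref{prop:monotone}. Decompose $\phi$ as $\phi=\psi\circ\chi$, where $\chi=(\chi_v,v\in V)$ is given by $\chi_v=\phi_v$ for $v\in A$ and $\chi_v=\mathrm{id}$ for $v\notin A$, and $\psi=(\psi_v,v\in V)$ is given by $\psi_v=\mathrm{id}$ for $v\in A$ and $\psi_v=\phi_v$ for $v\notin A$. Because each $\phi_v$ depends only on the $v$-th coordinate, one sees at once that $\psi\bigl(\chi(x)\bigr)=\phi(x)$ for every $x\in\cX$, so that $Y=\psi\bigl(\chi(X)\bigr)$.

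First I would apply Proposition~\ref{prop:monotone} to $X$ with the transformation $\chi$: by hypothesis $\chi_v$ is piecewise constant and non-decreasing for $v\in A$, and $\chi_v(x_v)=x_v$ for $v\notin A$, so the proposition gives that $X':=\chi(X)$ is $\mtp$. After this step the state space of each coordinate $v\in A$ is the finite set $\phi_v(\cX_v)$, while the remaining coordinates are unchanged, so $X'$ again takes values in a product of totally ordered spaces of the type considered throughout the paper.

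Next I would apply Proposition~\ref{prop:differentiable} to $X'$ with the transformation $\psi$. For $v\notin A$ the map $\psi_v=\phi_v$ is strictly increasing and differentiable by hypothesis, and for $v\in A$ the map $\psi_v$ is the identity, which is trivially strictly increasing and differentiable. Hence Proposition~\ref{prop:differentiable} applies and yields that $Y=\psi(X')=\phi(X)$ is $\mtp$, which is the assertion. (If one insists on the literal formulation of Proposition~\ref{prop:differentiable}, each $\psi_v$ is already a map $\R\to\R$, or may be extended to one by the identity outside the current state space, preserving strict monotonicity and differentiability; this changes nothing.)

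The argument involves no genuine obstacle; the only points worth checking are that the identity map is admissible in the hypotheses of Proposition~\ref{prop:differentiable}, so that the $A$-coordinates may be left untouched in the second step, and that Proposition~\ref{prop:monotone} is already phrased for transformations acting as the identity off $A$, so that the two results dovetail exactly. The composition could equally well be carried out in the opposite order — applying the strictly increasing differentiable maps first, via Proposition~\ref{prop:differentiable}, and the piecewise constant maps afterwards, via Proposition~\ref{prop:monotone} — since the two groups of coordinates are disjoint, and the outcome is the same.
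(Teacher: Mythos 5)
Your argument is correct and is essentially the paper's own proof: the paper likewise obtains the corollary by composing Proposition~\ref{prop:differentiable} and Proposition~\ref{prop:monotone}, merely performing the two steps in the opposite order (first the strictly increasing differentiable maps off $A$, then the piecewise constant maps on $A$), an order you yourself note is interchangeable. Your remark that the identity must be accepted as an admissible map in each proposition applies equally to the paper's version, so nothing is gained or lost either way.
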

\begin{proof}Just combine Proposition~\ref{prop:differentiable} and Proposition~\ref{prop:monotone} by first transforming to $Z_v=X_v$ for $v\in A$ and $Z_v=\phi_v(X_v)$ for $v\not \in A$; then subsequently letting $Y_v=\phi_v(Z_v)$ for $v\in A$ and $Y_v=Z_v$ for $v\not \in A$.
\end{proof}

The following result establishes that the $\mtp$ property is preserved under conditioning, marginalization, and monotone coarsening. A \emph{monotone coarsening} is an operation on a finite discrete state space $\cX_{i}$ that identifies a collection of neighbouring (in the given total order) states. For example, if $\cX_{i} = \{i_1,\dots ,i_p\}$ then $\cX'_{i} = \{\{i_1,\dots ,i_j\},i_{j+1}, \dots, i_k,\{i_{k+1},\dots, i_p\}\}$ is a monotone coarsening. 

\begin{prop}\label{prop:ClosedMC}
The $\mtp$ property is closed under conditioning, marginalization, and monotone coarsening. More precisely,
\begin{itemize}
\item[(i)] If $X$ has an $\mtp$ distribution, then for every $C\subseteq V$, the conditional distribution of $X_C\cd X_{V\setminus C}=x_{V\setminus C}$ is $\mtp$ for almost all $x_{V\setminus C}$;
\item[(ii)] If $X$ has an $\mtp$ distribution, then for every $A\subseteq V$, the marginal distribution $X_A$ of $X$ is $\mtp$;
\item[(iii)] If $X$ is $\mtp$ and discrete, and $Y$ is obtained from $X$ by monotone coarsening, then $Y$ is $\mtp$.
\end{itemize}
\end{prop}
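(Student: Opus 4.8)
The plan is to treat the three parts separately, in increasing order of difficulty.

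For (i), I would argue straight from the definition~\eqref{eq:MTP2}. The marginal density $f_{V\setminus C}$ exists by Tonelli's theorem, and for any $x_{V\setminus C}$ with $f_{V\setminus C}(x_{V\setminus C})>0$ the conditional density of $X_C$ is $x_C\mapsto f(x_C,x_{V\setminus C})/f_{V\setminus C}(x_{V\setminus C})$, that is, the restriction of $f$ to the slice through $x_{V\setminus C}$ up to a positive multiplicative constant. Given $x_C,y_C\in\cX_C$, applying~\eqref{eq:MTP2} to the points $(x_C,x_{V\setminus C})$ and $(y_C,x_{V\setminus C})$ --- which agree on every coordinate in $V\setminus C$, so that the coordinatewise minimum and maximum leave those coordinates unchanged --- yields exactly the $\mtp$ inequality for the conditional density once the common constant cancels.

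For (iii), I would note that a monotone coarsening of a discrete coordinate $i$ is the pushforward of $X$ along the map $\phi=(\phi_v,v\in V)$ with $\phi_v=\mathrm{id}$ for $v\neq i$ and $\phi_i\colon\cX_i\to\R$ a non-decreasing, piecewise constant surjection onto a finite subset of $\R$ order-isomorphic to $\cX'_i$, collapsing each block of identified states to a single value. Since the order on $\cX'_i$ coincides with the one inherited from $\R$, Proposition~\ref{prop:monotone} immediately gives that $Y$ is $\mtp$.

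The only substantive part is (ii), and by induction it suffices to marginalize one variable at a time: I would show that if $f(z,t)$ is $\mtp$ on $\cX_B\times\cX_w$, then $g(z)=\int_{\cX_w}f(z,t)\,\dd\mu_w(t)$ is $\mtp$ on $\cX_B$. Fixing $z,z'\in\cX_B$ and writing $\alpha(t)=f(z,t)$, $\beta(t)=f(z',t)$, $\gamma(t)=f(z\wedge z',t)$, $\delta(t)=f(z\vee z',t)$, the inequality~\eqref{eq:MTP2} applied to $(z,s)$ and $(z',t)$ gives $\alpha(s)\beta(t)\le\gamma(s\wedge t)\,\delta(s\vee t)$ for all $s,t\in\cX_w$. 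The desired conclusion $g(z)\,g(z')=\left(\int\alpha\right)\left(\int\beta\right)\le\left(\int\gamma\right)\left(\int\delta\right)=g(z\wedge z')\,g(z\vee z')$ (all integrals taken over $\cX_w$ against $\mu_w$) is then precisely the integral form of the lattice correlation (``four functions'') inequality on the chain $\cX_w$, which is available in~\cite{KarlinRinott80} and which I would invoke. I expect this to be the main obstacle: parts (i) and (iii) are formal consequences of the definition and of Proposition~\ref{prop:monotone} respectively, whereas (ii) genuinely requires the FKG/Ahlswede--Daykin type integration argument, the one delicate point being the passage from the finite four-functions statement to an arbitrary (possibly continuous) base measure $\mu_w$ by monotone approximation.
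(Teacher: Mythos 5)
Your proposal is correct and follows essentially the same route as the paper, which disposes of (i) by the slice argument from the definition, of (iii) via Proposition~\ref{prop:monotone}, and of (ii) by citing \cite[Proposition 3.2]{KarlinRinott80} --- the one-variable-at-a-time integration via the four-functions inequality on a chain that you sketch is precisely the content of that cited result. The only remark worth adding is that the ``delicate'' passage to a general base measure you anticipate is not actually needed: the pointwise bounds $\alpha(s)\beta(t)\le\gamma(s\wedge t)\delta(s\vee t)$ yield $\int\alpha\int\beta\le\int\gamma\int\delta$ directly for any $\sigma$-finite $\mu_w$ by the standard symmetrization over $\{s\le t\}$ and $\{s\ge t\}$, with no finite approximation step.
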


\begin{proof}

Property (i) follows directly from the definition of $\mtp$. Property (ii) is shown in~\cite[Proposition 3.2]{KarlinRinott80}. Property (iii) is an instance of a non-decreasing and piecewise constant transformation and follows from Proposition~\ref{prop:monotone}.
\end{proof}

As we will see in the following, 
the properties (i) and (ii) are the fundamental building blocks for understanding the implications of $\mtp$ on Markov properties and vice versa. Property (iii) has direct relevance for applications. In the statistical literature it is often warned that dependence relations may get distorted when combining neighboring levels of discrete variables, see for instance~\cite{RAS}. This may still be true for $\mtp$ distributions, see Example~\ref{ex:distortion} below, but the coarsening property (iii) implies that associations cannot become negative by such a process.

Another interesting fact about the $\mtp$ property is that, under suitable support conditions, it is a pairwise property meaning that it can be checked on the level of two variables only, when the remaining variables are fixed. We say that $f$ has \emph{interval support} if for any $x,y\in \cX$ the following holds
\begin{equation}\label{eq:supportCon}
f(x)f(y)\neq 0\qquad\mbox{implies}\qquad f(z)\neq 0\mbox{ for any } \,x\wedge y\leq z\leq x\vee y.
\end{equation}
Note that having \emph{interval support} is equivalent to having full support over a restricted state space that is a product of intervals. In this setting, Karlin and Rinott~\cite[Proposition~2.1]{KarlinRinott80} prove the following result.

\begin{prop}\label{KarlinRinott80}
If $f$ has interval support and  $f:\cX\to \R$ is $\tp$ in every pair of arguments when the remaining arguments are held constant, then $f$ is $\mtp$.
\end{prop}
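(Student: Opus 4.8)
The plan is to reduce the general statement to a two-variable induction by peeling off coordinates one at a time. The key observation is that the $\mtp$ inequality \eqref{eq:MTP2} involves, for a fixed pair $x,y\in\cX$, only the finite set of points obtained from the coordinates of $x$ and $y$ by taking minima and maxima; writing $I_v=\{x_v\wedge y_v,\,x_v\vee y_v\}$ and $\cX'=\prod_{v\in V}I_v$, the inequality to be verified is a statement purely about the restriction of $f$ to the product of two-point sets $\cX'$. The interval support hypothesis \eqref{eq:supportCon} guarantees that either $f$ vanishes at $x$ or at $y$ (in which case \eqref{eq:MTP2} holds trivially since the left side is zero and the right side nonnegative), or $f$ is strictly positive on all of $\cX'$. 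So we may assume $f>0$ on $\cX'$.

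First I would set up induction on $d=|V|$. The base case $d=2$ is exactly the hypothesis that $f$ is $\tp$ in that pair. For the inductive step, fix a distinguished variable, say $v^*$, and for each fixed value $t\in I_{v^*}$ consider the section $f_t(\cdot)=f(\cdot,t)$ as a function on $\prod_{v\neq v^*}I_v$. Each such section inherits the property of being $\tp$ in every remaining pair of arguments (this is immediate from the hypothesis on $f$) and inherits interval support on the reduced product, so by the induction hypothesis each $f_t$ is $\mtp$ on $\prod_{v\neq v^*}I_v$. What remains is to combine the two sections $f_{t_0}$ and $f_{t_1}$ (with $t_0<t_1$ the two elements of $I_{v^*}$) using total positivity in the $v^*$-direction. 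Concretely, given $x,y$ with arbitrary $v^*$-coordinates, one writes out the four relevant values of $f$ and needs to show $f(x)f(y)\le f(x\wedge y)f(x\vee y)$; splitting according to whether the $v^*$-coordinates of $x,y$ are equal or not, the equal case is handled directly by the $\mtp$ property of a single section, and the unequal case requires chaining the pairwise $\tp$ inequality in the $v^*$-slot with the $\mtp$ property of the sections.

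The main obstacle — and the reason the support condition is needed — is this chaining step in the unequal case: one has to interpolate through an intermediate point, and the cancellation of the intermediate factor only works when that factor is nonzero, which is precisely what interval support provides. I would make this precise by the standard trick of taking logarithms: on the strictly positive region, set $h=\log f$, so that $\mtp$ becomes supermodularity, $h(x)+h(y)\le h(x\wedge y)+h(x\vee y)$, and $\tp$ in a pair becomes supermodularity in that pair. Supermodularity is additive in a way that makes the coordinate-by-coordinate argument transparent: one shows that pairwise supermodularity plus "supermodularity in all pairs not involving $v^*$ for each fixed $v^*$-value" yields full supermodularity, essentially because the second difference of $h$ in any two directions can be expressed via a telescoping sum over the other coordinates of second differences that are each nonnegative by hypothesis.

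Alternatively, and perhaps more cleanly for a short writeup, I would simply invoke the inductive reduction explicitly through a one-variable-at-a-time scheme: prove by induction on $|A|$ that if $f$ is supermodular in every pair of coordinates and has interval support, then $f$ restricted to $\prod_{v\in A}I_v\times\{x_{V\setminus A}\wedge y_{V\setminus A}\text{ or }\vee\}$ satisfies the needed inequality; the inductive step adds one coordinate and uses only the two-variable $\tp$ property together with the previous step. Since Karlin and Rinott's proof (\cite[Proposition~2.1]{KarlinRinott80}) establishes exactly this, I would present the argument at the level of the reduction to two points per coordinate and the logarithmic reformulation as supermodularity, and then note that the finite-dimensional supermodularity statement is a routine telescoping argument, flagging the interval support hypothesis as the one place where positivity (hence the ability to subtract logs) is essential.
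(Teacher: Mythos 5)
The paper does not actually prove Proposition~\ref{KarlinRinott80}: it is quoted directly from Karlin and Rinott \cite[Proposition~2.1]{KarlinRinott80}, so there is no in-paper argument to compare against. Your sketch is a correct reconstruction of that result, and the route you ultimately settle on --- restrict attention to the sublattice $\prod_{v}\{x_v\wedge y_v,\,x_v\vee y_v\}$, use interval support to dispose of the case $f(x)f(y)=0$ and to guarantee strict positivity on the whole box $[x\wedge y,\,x\vee y]$ otherwise, pass to $h=\log f$, and deduce supermodularity of $h$ from pairwise supermodularity by telescoping (the local-to-global characterization of supermodularity on a product of chains) --- is sound, precisely because every intermediate point appearing in the telescoping lies in $[x\wedge y,\,x\vee y]$ and hence in the positive region. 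Two cautions. First, your intermediate ``induction on $d$ with two sections'' paragraph is looser than it looks: when $x_{v^*}\neq y_{v^*}$ the remaining coordinates of $x$ and $y$ may still differ in many places, so the inequality cannot be closed by a single application of $\tp$ ``in the $v^*$-slot'' together with the $\mtp$ property of one section; the induction that actually closes is on the number of coordinates in which $x$ and $y$ differ (equivalently, the full telescoping you describe in the supermodular formulation), and only that version should be written up. Second, the triviality of the case $f(x)f(y)=0$ uses $f\ge 0$ so that the right-hand side of \eqref{eq:MTP2} is nonnegative; this is implicit here since $f$ is a density, but it is worth stating, as the proposition is phrased for a general function $f:\cX\to\R$.
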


We conjecture that this result holds also under a weaker support condition, namely that the support is \emph{coordinate-wise connected}~\cite{peters:15}, meaning that the connected components of the support can be joined by axis-parallel lines. We now provide such an instance in the binary $2\times 2\times 2$ setting.

\begin{ex}\rm
\label{ex_coordinatewise}
Consider a binary $2\times 2\times 2$ distribution, where the support only misses the entries $(1,0,0)$ and $(1,0,1)$. In this example there are only two non-trivial pairwise $\tp$ constraints to consider, namely for $x_1=0$ and for $x_2=1$, i.e.\
\begin{equation}
p_{000}p_{011}\geq p_{010}p_{001} \qquad \textrm{and}\qquad p_{010}p_{111}\geq p_{110}p_{011}\label{eq_2inequalities}
\end{equation}
as the remaining six pairwise inequalities reduce to $0\geq 0$.

So assume $p$ satisfies (\ref{eq_2inequalities}).  We need to show that the nine inequalities in (\ref{eq:3binary}) below are satisfied.   Again, six of them are trivial, and two of them are exactly those in (\ref{eq_2inequalities}). The remaining inequality
$$
p_{000}p_{111}\geq p_{001}p_{110},
$$
follows from
multiplying the two inequalities in (\ref{eq_2inequalities}) to get
$$
(p_{000}p_{011})(p_{010}p_{111})\geq (p_{010}p_{001})(p_{110}p_{011}),
$$
and dividing both sides by $p_{010}p_{011}$.
Hence, in this case pairwise $\tp$ constraints imply the $\mtp$ property even though the distribution does not have interval support.
\qed
\end{ex}

As mentioned in Section~\ref{sec:introduction}, if $X$ is $\mtp$, then the variables in $X$ are \emph{associated}, i.e.,
\begin{equation}
\label{eq_increasing}
{\rm cov}\{f(X),g(X)\}\geq 0
\end{equation}
for any coordinatewise non-decreasing functions $f$ and $g$ for which the covariance exists. For discrete distributions this follows by the FKG theorem~\cite{fortuin1971correlation}, or, more generally, by the four functions theorem by Ahlswede and Daykin~\cite{ahlswede1978inequality}; the general case was proved by Sarkar \cite{sarkar}. The following result, first proven by Lebowitz~\cite{lebowitz1972bounds}, shows that the independence structure for associated vectors is encoded in the covariance matrix; see also \cite{joag-dev1983,Newman1984}.

\begin{thm}[Corollary 3, \cite{Newman1984}]\label{th:newman}
If  $X$ are associated and $\E|X_v|^2<\infty$ for all $v\in V$, then $X_A$ is independent of $X_B$ if and only if ${\rm cov}(X_u,X_v)=0$ for all $u\in A$ and $v\in B$.
\end{thm}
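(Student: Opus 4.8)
The ``only if'' direction is immediate, since independence of $X_A$ and $X_B$ forces $\cov(X_u,X_v)=0$ for all $u\in A$ and $v\in B$, these covariances being well defined by the second-moment assumption. For the converse we may take $A$ and $B$ disjoint (variables lying in $A\cap B$ are a.s.\ constant under the hypothesis). Fix real vectors $s=(s_u)_{u\in A}$ and $t=(t_v)_{v\in B}$ and set $U=\sum_{u\in A}s_uX_u$ and $V=\sum_{v\in B}t_vX_v$; since a distribution is determined by its characteristic function, it suffices to show that the joint characteristic function of $(X_A,X_B)$ factorizes, i.e.\ that $\E e^{i(U+V)}=\E e^{iU}\,\E e^{iV}$ for every $s$ and $t$.

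The plan is to estimate $\E e^{i(U+V)}-\E e^{iU}\E e^{iV}=\cov(e^{iU},e^{iV})$ by the coordinate covariances. The engine is a covariance inequality for associated random vectors going back to Newman: if $X$ is associated with finite variances and $\varphi\colon\R^A\to\R$, $\psi\colon\R^B\to\R$ are Lipschitz in each coordinate, then
\[
|\cov(\varphi(X_A),\psi(X_B))|\ \le\ C\sum_{u\in A}\sum_{v\in B}L_u(\varphi)\,L_v(\psi)\,\cov(X_u,X_v)
\]
for a universal constant $C$ (with $C=1$ when $\varphi$ and $\psi$ are coordinatewise nondecreasing), where $L_u(\varphi)$ denotes the Lipschitz constant of $\varphi$ in its $u$th argument. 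Applying this to the real and imaginary parts of $x_A\mapsto e^{i\langle s,x_A\rangle}$ and of $x_B\mapsto e^{i\langle t,x_B\rangle}$, whose coordinatewise Lipschitz constants are $|s_u|$ and $|t_v|$, gives $|\cov(e^{iU},e^{iV})|\le C'\sum_{u\in A}\sum_{v\in B}|s_u||t_v|\cov(X_u,X_v)$. By hypothesis this bound vanishes, so $\E e^{i(U+V)}=\E e^{iU}\E e^{iV}$ for all $s,t$, and $X_A\indep X_B$ follows.

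The step I expect to be the main obstacle is the covariance inequality itself, the source of the difficulty being the possibly \emph{negative} coefficients $s_u,t_v$: associatedness only constrains covariances of coordinatewise \emph{nondecreasing} functions, and negating a coordinate destroys it, so there is no easy reduction to the nonnegative case. When $s,t\ge0$ everything is elementary: then $U$ and $V$ are nondecreasing functions of the associated vector $X$, so $(U,V)$ is associated with $\cov(U,V)=\sum_{u,v}s_ut_v\cov(X_u,X_v)=0$; Hoeffding's identity $\cov(U,V)=\iint\{F_{U,V}(u,v)-F_U(u)F_V(v)\}\,\dd u\,\dd v$ has a nonnegative integrand by the associatedness of the pair, forcing $F_{U,V}=F_UF_V$ and hence $U\indep V$. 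This already gives the factorization of the joint characteristic function, but only for $s$ and $t$ each lying in the positive or negative orthant, not for general real arguments, so one genuinely needs the full inequality, whose proof proceeds by replacing the coordinates of $X$ by independent copies one at a time and bounding each resulting increment; see \cite{Newman1984}.
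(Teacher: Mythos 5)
The paper offers no proof of this statement at all---it is imported verbatim as Corollary~3 of Newman \cite{Newman1984}---so the only comparison available is with that source, and your argument is essentially Newman's own: reduce independence of $X_A$ and $X_B$ to factorization of the joint characteristic function and bound $|\E e^{i(U+V)}-\E e^{iU}\E e^{iV}|$ by a constant times $\sum_{u\in A}\sum_{v\in B}|s_u||t_v|\cov(X_u,X_v)$, which vanishes because association makes each $\cov(X_u,X_v)\ge 0$ and the hypothesis makes them zero. This is correct, and you have correctly identified the Lipschitz covariance inequality as the real content. One remark: the step you flag as the main obstacle is more elementary than you suggest, and does not require swapping in independent copies. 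With $F(x)=\sum_{u}L_u(\varphi)x_u$ and $G(x)=\sum_{v}L_v(\psi)x_v$, the four functions $F\pm\varphi$ and $G\pm\psi$ are coordinatewise nondecreasing, so association gives $\cov(F\pm\varphi,\,G\pm\psi)\ge 0$ for all four sign choices; adding these in pairs yields $|\cov(\varphi,\psi)|\le \cov(F,G)=\sum_{u,v}L_u(\varphi)L_v(\psi)\cov(X_u,X_v)$, i.e.\ the inequality holds with $C=1$ for arbitrary coordinatewise Lipschitz functions, not only nondecreasing ones. Your Hoeffding-identity detour for the orthant case $s,t\ge 0$ is fine but, as you yourself note, superfluous once this inequality is in hand; the only other point worth recording is the one you already made, that the second-moment hypothesis is what makes all the covariances (and the right-hand side of the inequality) well defined.
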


In Section~\ref{sec:CIMTP}, we study conditional independence models for $\mtp$ distributions. Interestingly, we will show in Theorem~\ref{th:block} that for $\mtp$ distributions a stronger result holds, namely that every $\mtp$ random vector can be decomposed into independent components such that within each component all variables are mutually marginally dependent. This means in particular that for $\mtp$ distributions, all marginal independence relations also hold when conditioning on further variables; the general version of this property will be termed upward-stability, see Section~\ref{sec:CIMTP}.

\section{Examples of Gaussian and binary $\mtp$ distributions}
\label{sec:examples}

Many examples of $\mtp$ random variables are discussed in the literature; see, e.g., \cite{KarlinRinott80,karlinGaussian}. In this section we focus on binary and multivariate Gaussian $\mtp$ distributions. Although the $\mtp$ property may appear restrictive, we want to suggest that $\mtp$ distributions are important in practice and in fact appear in real data sets. 

\subsection{Multivariate Gaussian $\mtp$ distributions}\label{sec:gaussian}

Consider a multivariate Gaussian random vector $X$ with mean $\mu$ and covariance matrix $\Sigma$. Denote by $K$ the inverse of $\Sigma$. Then, the distribution of $X$ is $\mtp$ if and only if $K$ is an
M-matrix; see \cite{karlinGaussian}, i.e.,
\begin{itemize}
\item[(i)] $k_{vv}>0$ for all $v\in V$,
\item[(ii)] $k_{uv}\leq 0$ for all $u,v\in V$ with $u\neq v$.
\end{itemize}

Properties and consequences of M-matrices were studied by Ostrowski~\cite{O} who chose the name to honour H.~Minkowski who had considered aspects of such matrices earlier. The connection to multivariate Gaussian distributions was established by B{\o}lviken~\cite{B}.

In the previous section we mentioned that if $X$ is $\mtp$, then its constituent variables are associated. Therefore, for $\mtp$ Gaussian distributions it holds that $\sigma_{uv}\geq 0$ for all $u,v\in V$. More precisely, the covariance matrix has a block diagonal structure and each block has only strictly positive elements; see also Theorem \ref{th:block} below. Note, however, that this condition is necessary but not sufficient for the $\mtp$ property.

We now analyze by simulation how restrictive the $\mtp$ constraint is for Gaussian distributions. We quantify this by studying the ratio of the volume of all correlation matrices that satisfy the $\mtp$ constraint to  the volume of all correlation matrices. Since no closed-form formula for these volumes is known, we use a simple Monte Carlo simulation. We uniformly sample correlation matrices using the method suggested by Joe~\cite{Joe20062177}, which is implemented in the \texttt{R} package \texttt{clusterGeneration}. We performed simulations for $|V|=3,4,5$ and we here report how many correlation matrices out of $100{,}000$ samples satisfy the $\mtp$ constraint.
\begin{table}[htp!]
\begin{tabular}{c|ccc}
$|V|$ & 3 & 4 & 5\\
\hline
$\mtp$ & 5004 &  90 & 0
\end{tabular}
\end{table}

These simulation results show that the relative volume of $\mtp$ Gaussian distributions
drops dramatically with increasing~$|V|$ when no conditional independences are taken into
account. However, the picture changes when imposing conditional independence relations. For example, if $|V|=3$, then by the above simulations about $5\%$ of all Gaussian distributions correspond to $\mtp$ distributions. If $1\indep 2\mid 3$, then by a symmetry argument precisely $25\%$ of such distributions are $\mtp$. If, in addition, we impose $1\indep 3\cd 2$ --- which implies also $1\indep (3,2)$ --- the ratio of $\mtp$ distributions increases to $50\%$. Finally, \textit{all} distributions that are fully independent are $\mtp$.  

We next discuss a prominent data set consisting of the examination marks of $88$ students in five different mathematical subjects. The data were reported in \cite{mardia1979} and analyzed, for example, in \cite{edwards,GMsymms,whittaker1990}. The inverse of the sample covariance matrix, together with the corresponding partial correlations $\rho_{uv\cdot V\setminus \{u,v\}}$, are displayed in Table \ref{tab:concex1}. This matrix is very close to being an M-matrix with only one negative partial correlation equal to $-0.00001$. 
Furthermore, when fitting reasonable graphical models to the data, all fitted distributions are $\mtp$.

\begin{table}[htp!]
\caption{Empirical partial correlations (below the diagonal) and entries of the inverse of the sample covariance matrix ($\times 1000$, on and above the diagonal) for the examination marks in five mathematical subjects.}\label{tab:concex1}
\begin{tabular}{l|rrrrr}
 & Mechanics & Vectors & Algebra & Analysis &Statistics\\
\hline
Mechanics & $5.24$ & $-2.44$ & $-2.74$ & $0.01$ & $-0.14$\\
Vectors & 0.33& $10.43$ & $-4.71$ & $-0.79$ & $-0.17$\\
Algebra &  0.23&  0.28& $26.95$ & $-7.05$ & $-4.70$\\
Analysis &  -0.00&  0.08&  0.43& $9.88$ & $-2.02$\\
Statistics &  0.02&  0.02& 0.36 &  0.25& $6.45$\\
\end{tabular}
\end{table}

\subsection{Binary $\mtp$ distributions}\label{sec:binary}

Suppose that $X$ is a binary random vector with $\cX=\{0,1\}^{|V|}$ and we denote its distribution by $P=[p_{x}]$ for $x\in \cX$. For example, if $|V|=3$, binary $\mtp$ distributions must satisfy the following nine inequalities
\begin{eqnarray}\label{eq:3binary}
\nonumber p_{011}p_{000}\geq  p_{010}p_{001} & p_{101}p_{000}\geq  p_{100}p_{001}  & p_{110}p_{000}\geq  p_{100}p_{010} \\
p_{111}p_{100}\geq p_{110}p_{101} & p_{111}p_{010}\geq p_{110}p_{011}  & p_{111}p_{001}\geq  p_{101}p_{011} \\
\nonumber p_{111}p_{000}\geq  p_{100}p_{011} & p_{111}p_{000}\geq p_{010}p_{101}  & p_{111}p_{000}\geq p_{001}p_{110} .
\end{eqnarray}
The first two rows correspond to the inequalities $p_{x\wedge y}p_{x\vee y}\geq p_{x}p_{y}$ as in (\ref{eq:MTP2}), where $x$ and $y$ differ only in two entries. These inequalities are equivalent to requesting that the six possible conditional log-odds ratios are non-negative. By Proposition \ref{KarlinRinott80}, the inequalities in the last row are implied by the remaining ones in the case when $P>0$, or more generally, if $P$ has interval support. For $P>0$ this can be seen from identities of the form
$$
p_{111}p_{000}-p_{010}p_{101}\;=\;\frac{p_{000}}{p_{001}}(p_{111}p_{001}-p_{101}p_{011})\;+\;\frac{p_{101}}{p_{001}}(p_{011}p_{000}-p_{010}p_{001}).
$$
For positive binary distributions  we can verify $\mtp$ for any pair of variables with the remaining variables fixed. In the binary case this gives a single constraint for any choice of a pair and values for the remaining $|V|-2$ variables, in other words ${|V|\choose 2}\cdot 2^{|V|-2}$ inequalities. For binary $\mtp$ distributions there is a nice description in terms of log-linear parameters in~\cite{forcina2000}, see also Corollary \ref{cor:forcina} below.

The $\mtp$ hypothesis is rather restrictive in the binary setting when no further conditional independence restrictions are assumed. Note, however, that binary models can become more complex than in the Gaussian case, since  log-linear interactions of higher-order than pairwise may be present. In the following, we study the volume of $\mtp$ distributions with respect to the volume of the whole probability simplex. Similarly as in the Gaussian setting, we sample uniformly from the probability simplex. We here report how many samples out of 100,000 satisfy the $\mtp$ constraints for $|V|=3,4$. Note that already for $|V|=4$ we did not find a single instance although the volume of the set of $\mtp$ distributions is always positive:
\begin{table}[htp!]
\begin{tabular}{c|ccc}
$|V|$ & 3 & 4 \\
\hline
$\mtp$ & 2195 &  0
\end{tabular}
\end{table}

Like in the Gaussian case the relative volume of $\mtp$ distributions is higher when imposing additional conditional independence restrictions. By the same symmetry argument as in the Gaussian setting we obtain that for $|V|=3$ precisely $25\%$ of all binary distributions satisfying $1\indep 2 \cd 3$ are $\mtp$. If, in addition, we have $1\indep 3\cd 2$ then half of these distributions are $\mtp$. Finally, all binary full independence distributions are $\mtp$.

This interplay with conditional independence might explain in part why binary $\mtp$ distributions do arise in practice. See \cite[Section 5]{wermuth2014star} for examples of datasets that are $\mtp$ or nearly $\mtp$. In the following, we discuss two such examples.

\begin{ex}\rm We first consider a  dataset on \emph{EPH-gestosis}, collected 40 years ago in a study on ``Pregnancy and Child Development'' by the German Research Foundation and recently analyzed in~\cite[Section 5.1]{wermuth2014star}. EPH-gestosis represents a disease syndrome for pregnant women. The three symptoms are edema (high body water retention), proteinuria (high amounts of urinary proteins) and hypertension (elevated blood pressure). The observed counts $N=(n_{x})$~are
$$
\left[\begin{array}{cccc}
n_{000} & n_{010} & n_{001} & n_{011} \\
n_{100} & n_{110} & n_{101} & n_{111}
\end{array}\right]\quad=\quad \left[\begin{array}{cccc}
3299 & 107 & 1012 & 58 \\
78 & 11 & 65 & 19
\end{array}\right].
$$

If untreated, EPH-gestosis is a  major cause of death of mother and child during  birth~\cite[p.~65]{SZP}. However, treatment of the symptoms prevents negative consequences and the symptoms occur rarely after the first pregnancy. 

The observed counts have odds-ratios larger than one for each pair at the fixed level of the third variable, hence the empirical distribution is $\mtp$. Equivalently, the sample distribution satisfies all the constraints in (\ref{eq:3binary}). The three symptoms do not occur more frequently jointly than in pairs and the observed conditional odds-ratios are nearly equal given the third symptom. Possible  interpretations are that physicians intervened at the latest when two
symptoms occurred and that a single common cause, though unknown and unobserved, may have generated the marginal dependences between the symptom pairs. \qed
\end{ex}

\begin{ex}\rm\label{ex:laryngeal}
Next we discuss an example on five binary random variables.
This is a subset of data from a Polish case-control
study on laryngeal cancer \cite{Zatonski}. Details  on the study design, our selection criteria
for cases and controls, and the analysis will be given elsewhere.

In case-control studies  the observations are implicitly obtained conditionally on the values of at least  one response variable and
 on  relevant explanatory variables. For such designs,
the class of concentration graph models are appropriate for studying dependence structure among the variables.

In this study, we have 185  \emph{laryngeal cancer cases}   in urban residential areas (coded 1;  35.7\%) and 308 controls, coded 0. Four further  $0,1$ variables are defined so that  1 indicates the level known to carry  the higher cancer risk, namely \emph{heavy vodka drinking} (1:= regularly for 2 or more years; 21.3\%), \emph{heavy cigarette smoking} (1:= 30 or more cigarettes per day; 13.8\%, and 0:= 6 to 29 cigarettes per day), \emph{age at study entry} (1:= 54 to 65 years; 51.5\% and 0:= 46 to 53 years), and \emph{level of formal education} (1:= less than 8 years;  57.8 \% and  0:=8 to 11 years).

A well-fitting log-linear model for these data is determined by the sufficient margins $\{\{1,2\}, \{1,3\},\{2,3\},  \{1,4,5\}\}$, in other words by permitting log-linear interaction terms only among variable groups that are subsets of these sets.  This model yields  an overall likelihood-ratio $\chi^2$ of $13.6$ with 19 degrees of freedom and corresponds to a concentration graph with cliques: $\{1,2,3\}$ and $\{1,4,5\}$.
The  corresponding observed and  fitted counts are:
\begin{center}
{\scriptsize
$$
\left[\begin{array}{cccc}
00000 & 10000 & 01000 & 11000 \\
00100 & 10100 & 01100 & 11100 \\
00010 & 10010 & 01010 & 11010 \\
00110 & 10110 & 01110 & 11110 \\
00001 & 10001 & 01001 & 11001 \\
00101 & 10101 & 01101 & 11101 \\
00011 & 10011 & 01011 & 11011 \\
00111 & 10111 & 01111 & 11111 \\
\end{array}\right]: \left[\begin{array}{rrrr}
85 & 11  & 5 & 6 \\
10 & 1  & 1  & 2 \\
46 & 15 & 3 & 7 \\
7 & 2 & 2 & 5 \\
51&27&7&18\\
4&6 & 1& 4 \\
73 &36 & 5& 30 \\
5 & 9 & 3 & 6 \\
\end{array}\right],  \left[\begin{array}{rrrr}
85.88 & 9.87  & 7.30 & 6.35 \\
9.27 & 1.70  & 1.55  & 2.08 \\
47.59 & 14.31 & 4.19 & 9.21 \\
5.32 & 2.47 & 0.89 & 3.02 \\
51.70&27.13 &4.55&17.46\\
5.78 & 4.68 & 0.97& 5.73 \\
70.57 & 39.96 & 6.22 & 25.72 \\
7.89 & 6.89& 1.32 & 8.43 \\
\end{array}\right]\!\!.
$$}
\end{center}
For  pairs within the cliques,  the fitted two-way margins must coincide with the observed  bivariate tables of counts; here we report
marginal observed and fitted odds-ratios, $\oddsr(I,J)$, and  fitted conditional odds-ratios given the remaining variables, $\oddsr(I,J\cd R)$:
\begin{center}
{\small
\begin{tabular}{lcccccccccccc} \hline\\[-3mm]
variable pair:&1,2&1,3& 1,4& 1,5&2,3& 2,4 &2,5& 3,4& 3,5&4,5\\ \hline \\[-2mm]
observed $\oddsr(I,J)$: & 7.6 & 1.9 & 1.7  & 3.0 & 2.3 & 1.4& 2.0& 1.3& 0.9& 2.0\\[1mm]
fitted $\oddsr(I,J)$:&  & & & & &1.3& 1.6& 1.1& 1.2&  \\[1mm]
fitted $\oddsr(I,J\cd R)$:& 7.3 & 1.5& 2.5 & 4.4& 1.9&1&1 &1 &1 & ${}^{*)}$\\
\hline\\[-5mm]
\end{tabular}}\end{center}
{\quad \small ${}^{*)}$  $2.4$ for controls and $1.02$ for cases.\\[-4mm]}

Because the  observed $(3,5)$ odds-ratio is smaller than $1$ and hence the log-odds-ratio is negative,
the observed distribution is not $\mtp$.  In addition, $21$ of the  observed $80$ conditional log-odds ratios, $\oddsr(I,J\cd R)$, are less than $1$. However in the well-fitting model described above we have $\oddsr(I,J\cd R)\geq 1$ for  all $80$  odds-ratios,   so that the fitted distribution is $\mtp$.  This implies that each possible marginal table --- here of two, three, or four variables --- shows positive or vanishing pairwise dependence for all variable pairs.

From the concentration graph it follows that prediction of  drinking  and smoking habits cannot be
improved by using information about age or level of formal education for the studied cases or controls and that there is no  log-linear interaction involving more than three factors. The set of minimal sufficient tables tells that the only three-factor interaction is  in the $\{1,4,5\}$-table.  From the above change in the conditional odds-ratio for pair $(4,5)$ from $2.4$
to $1.02$, it follows that the expected improvement in education for younger -- compared to older participants -- only shows for controls but not for the cases. In combination with the fact that
$\oddsr(1,5\cd R)=4.4$, this implies that level of formal education should be explicitly included  in comparisons of  results across countries
 and  in  future  studies  on laryngeal cancer.
\end{ex}

\section{Conditional independence models and total positivity}
\label{sec:CIMTP}

An \emph{independence model} $\mathcal{J}$ over a finite set $V$ is a set of triples $\langle A,B\cd C\rangle$ (called \emph{independence statements}), where $A$, $B$, and $C$ are disjoint subsets of $V$; $C$
may be empty, and $\langle \varnothing,B\cd C\rangle$ and $\langle A,\varnothing\cd C\rangle$ are always included in $\mathcal{J}$. The independence statement $\langle A,B\cd C\rangle$ is read as ``$A$ is independent of $B$ given $C$''. Independence models do not necessarily have a  probabilistic interpretation; for a discussion on general independence models, see \cite{stu05}.

An independence model $\mathcal{J}$ over a set $V$ is a \emph{semi-graphoid} if it satisfies the following four properties for disjoint subsets $A$, $B$, $C$, and $D$ of $V$:
 \begin{enumerate}[({S}1)]
    \item $\langle A,B\cd C\rangle\in \mathcal{J}$ if and only if $\langle B,A\cd C\rangle\in \mathcal{J}$ (\emph{symmetry});
    \item if $\langle A,B\cup D\cd C\rangle\in \mathcal{J}$, then $\langle A,B\cd C\rangle\in \mathcal{J}$ and $\langle A,D\cd C\rangle\in \mathcal{J}$ (\emph{decomposition});
    \item if $\langle A,B\cup D\cd C\rangle\in \mathcal{J}$, then $\langle A,B\cd C\cup D\rangle\in \mathcal{J}$ and $\langle A,D\cd C\cup B\rangle\in \mathcal{J}$ (\emph{weak union});
    \item $\langle A,B\cd C\cup D\rangle\in \mathcal{J}$ and $\langle A,D\cd C\rangle\in \mathcal{J}$
    if and only if $\langle A,B\cup D\cd C\rangle\in \mathcal{J}$ (\emph{contraction}).
 \end{enumerate}
A semi-graphoid for which the reverse implication of the weak union property holds is said to be a \emph{graphoid} that is, it also satisfies
\begin{enumerate}[({S}1)]
\addtocounter{enumi}{4}
	\item if $\langle A,B\cd C\cup D\rangle\in \mathcal{J}$ and $\langle A,D\cd C\cup B\rangle\in \mathcal{J}$ then $\langle A,B\cup D\cd C\rangle\in \mathcal{J}$ (\emph{intersection}).
\end{enumerate}
Furthermore, a graphoid or semi-graphoid for which the reverse implication of the decomposition property holds is said to be \emph{compositional}, that is it also satisfies
\begin{enumerate}[({S}1)]
\addtocounter{enumi}{5}
	\item if $\langle A,B\cd C\rangle\in \mathcal{J}$ and $\langle A,D\cd C\rangle\in \mathcal{J}$ then $\langle A,B\cup D\cd C\rangle\in \mathcal{J}$ (\emph{composition}).
\end{enumerate}
Some independence models have additional properties;  below we write singleton sets $\{u\},\{v\}$ compactly as $u,v$, etc.
\begin{enumerate}[({S}1)]
\addtocounter{enumi}{6}
	\item if $\langle u,v\cd C\rangle\in \mathcal{J}$ and $\langle u,v\cd C \cup w\rangle\in \mathcal{J}$, then $\langle u,w\cd C\rangle\in \mathcal{J}$ or $\langle v,w\cd C\rangle\in \mathcal{J}$ (\emph{singleton-transitivity});

	\item if $\langle A,B\cd C\rangle\in \mathcal{J}$ and $D\subseteq V\setminus(A\cup B)$, then $\langle A,B\cd C\cup D\rangle\in \mathcal{J}$ (\emph{upward-stability}).
\end{enumerate}
The properties above are not independent and upward-stability is a very strong property. For example, we have the following simple lemma:
\begin{lem}\label{lem:axioms} Any upward stable semi-graphoid satisfies (S6) composition.
\end{lem}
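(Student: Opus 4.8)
The plan is to derive composition (S6) directly from upward-stability (S8) together with the semi-graphoid axioms, so the only real ingredient beyond the axioms is (S8) itself. Suppose $\langle A,B\cd C\rangle\in\mathcal{J}$ and $\langle A,D\cd C\rangle\in\mathcal{J}$; we want $\langle A,B\cup D\cd C\rangle\in\mathcal{J}$. Without loss of generality we may assume $A,B,C,D$ are pairwise disjoint — this is the standing convention in the definition of the axioms (S1)--(S8), so there is nothing to check there.

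First I would apply upward-stability (S8) to the statement $\langle A,D\cd C\rangle$, adjoining the set $B$ (which is disjoint from $A\cup D$) to the conditioning set: this yields $\langle A,D\cd C\cup B\rangle\in\mathcal{J}$. Now I have $\langle A,B\cd C\rangle\in\mathcal{J}$ and $\langle A,D\cd C\cup B\rangle\in\mathcal{J}$, which is exactly the pair of hypotheses needed for the contraction axiom (S4) (in the direction ``if $\langle A,B\cd C\rangle$ and $\langle A,D\cd C\cup B\rangle$ then $\langle A,B\cup D\cd C\rangle$''): matching the notation, take the role of ``$B$'' in (S4) to be $D$ and the role of ``$D$'' in (S4) to be $B$. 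This gives $\langle A,B\cup D\cd C\rangle\in\mathcal{J}$, which is (S6).

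There is essentially no obstacle here; the proof is a two-line deduction. The only point requiring a moment's care is the bookkeeping of which set plays which role in the contraction axiom and checking the disjointness side-conditions (that $B$ is disjoint from $A\cup D$ so that (S8) applies, and that the sets entering (S4) are appropriately disjoint, which holds by the standing assumption). One should also note that (S8) as stated quantifies over $D\subseteq V\setminus(A\cup B)$ and produces $\langle A,B\cd C\cup D\rangle$; when we invoke it we are implicitly using that $\langle A,B\cup D\cd C\rangle$ automatically has the form with disjoint blocks, and if $B$ and $D$ overlapped one would first intersect appropriately — but under the disjointness convention this does not arise. The statement is deliberately a warm-up illustrating that upward-stability is ``very strong'', so a short clean argument is exactly what is wanted.
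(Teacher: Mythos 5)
Your proof is correct and is essentially the paper's own argument: one application of upward-stability followed by contraction. The only cosmetic difference is that you apply (S8) to $\langle A,D\cd C\rangle$ (adjoining $B$) where the paper applies it to $\langle A,B\cd C\rangle$ (adjoining $D$), which is an immaterial relabelling.
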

\begin{proof}
If $\langle A,B\cd C\rangle\in \mathcal{J}$,  (S8) yields $\langle A,B\cd C\cup D\rangle\in \mathcal{J}$; hence from (S4) we get that $\langle A,D\cd C\rangle\in \mathcal{J}$ implies $\langle A,B\cup D\cd C\rangle\in \mathcal{J}$, which is (S6).
\end{proof}

A fundamental example of an independence model is induced by separation in an undirected graph $G=(V,E)$, denoted by $\mathcal{J}(G)$:
$$\langle A,B\cd S\rangle\in \mathcal{J(G)} \iff \mbox{ $S$ separates $A$ from $B$}$$ in the sense that all paths between $A$ and $B$ intersect $S$. The independence model  $\mathcal {J(G)}$  satisfies all of the above properties (S1)--(S8).

Consider a set $V$ and associated random variables
$X=(X_v)_{v\in V}$. For disjoint subsets $A$, $B$, and $C$ of $V$
we use the short notation $A\cip B\cd C$ to denote that $X_A$ is \emph{conditionally independent of $X_B$ given $X_C$} \cite{daw79,lau96}, i.e.\ that for any measurable $\Omega\subseteq \mathcal{X}_A$ and $P$-almost all $x_B$ and $x_C$,
$$P(X_A \in \Omega\cd X_B=x_B, X_C=x_C)=P(X_A \in \Omega\cd X_C=x_C).$$
We can now induce an independence model $\mathcal{J}(P)$ by letting
\begin{displaymath}
\langle A,B\cd C\rangle\in \mathcal{J}(P) \text{ if and only if } A\cip B\cd C \text{ w.r.t.\ $P$}.
\end{displaymath}

Probabilistic independence models are always semi-graphoids~\cite{Pearl:87}. If, for example, $P$ has a strictly positive density $f$, the induced independence model is always a graphoid; see e.g.\ Proposition~3.1 in \cite{lau96}. More generally, if $f$ is continuous, Peters~\cite{peters:15} showed that the induced independence model is a graphoid if and only if the support is \emph{coordinate-wise connected}, i.e., all connected components of the support of the density can be connected by axis-parallel lines. In particular, this applies to the discrete case since any function over a discrete space is continuous. See also \cite{dawid:80} for  general discussions  and \cite{sanmartin2005} for necessary and sufficient conditions under which the intersection property  holds in joint Gaussian or binary distributions.

Examples of discrete distributions
violating one of (S5), (S6), or (S7) have been given in~\cite{W12}. We will prove in this section that, under weak assumptions,  independence models generated by $\mtp$ distributions satisfy all of the properties (S1)-(S8).

First, note that by Proposition~\ref{prop:ClosedMC} the $\mtp$ property is closed under marginalization and conditioning. Applying this to the conditional distribution of $(X_u,X_v)$ given $X_C$ for all $u,v\in V$ with $u\neq v$, $C\subseteq V\setminus\{u,v\}$, implies that the following conditional covariances must be nonnegative
\begin{equation}
\label{eq_conditioning}
{\rm cov}\{\phi(X_{u}),\psi(X_{v})\cd X_{C}\}\geq 0 \qquad \textrm{a.s.}
\end{equation}
for non-decreasing functions $\phi,\psi:\R\to \R$ for which the covariance exists. Recall that a function of several variables is non-decreasing if it is non-decreasing in each coordinate. The following related result was first proved in \cite[Section 3.1]{sarkar} (see also \cite[Theorem~4.1]{KarlinRinott80}).

\begin{prop}
\label{thm_KarlinRinot}
Let $X$ be $\mtp$. Then for any subset $A\subseteq V$ and any non-decreasing function $\varphi:\cX_A\to \R$ for which $\E|\varphi(X_A)|<\infty$, the conditional expectation
$$
\E\left\{\varphi(X_A)\cd X_{V\setminus A}=x_{V\setminus A}\right\}
$$
is non-decreasing in $x_{V\setminus A}$.
\end{prop}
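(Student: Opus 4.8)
The plan is to reduce the statement to the bivariate ($\tp$) case by a conditioning argument, using the closure of $\mtp$ under marginalization and conditioning (Proposition~\ref{prop:ClosedMC}). Write $B=V\setminus A$ and split $B=\{w\}\cup B'$ for a chosen $w\in B$. First I would show the monotonicity claim ``one coordinate at a time'': it suffices to prove that for fixed $x_{B'}$, the map $x_w\mapsto \E\{\varphi(X_A)\cd X_w=x_w, X_{B'}=x_{B'}\}$ is non-decreasing, since iterating over the coordinates of $B$ (each time re-conditioning on a slice that is again $\mtp$ by Proposition~\ref{prop:ClosedMC}(i)) yields full monotonicity in $x_B$. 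So the core is a statement about the $(A,w)$-marginal conditioned on $X_{B'}=x_{B'}$, which is $\mtp$; replacing $X$ by this conditional distribution, I may assume $B=\{w\}$ from now on.

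Next I would reduce to the case where $\varphi$ is an indicator of an up-set. For a non-decreasing $\varphi$ with finite mean, the level sets $U_t=\{x_A:\varphi(x_A)>t\}$ are up-sets in $\cX_A$, and by the layer-cake representation $\varphi(x_A)=\int_0^\infty \indic\{x_A\in U_t\}\,\dd t - \int_{-\infty}^0 \indic\{x_A\notin U_t\}\,\dd t$ together with Fubini (justified by $\E|\varphi(X_A)|<\infty$), it is enough to prove that $x_w\mapsto P(X_A\in U\cd X_w=x_w)$ is non-decreasing for every up-set $U\subseteq\cX_A$. Writing $h(x_A)=\indic\{x_A\in U\}$, this non-decreasing indicator is itself $\mtp$ (indicators of up-sets satisfy \eqref{eq:MTP2}), so $f(x_A,x_w)h(x_A)$ is a product of two $\mtp$ functions on the same ordered space and hence $\mtp$ — here I would invoke the standard fact that a product of $\mtp$ functions is $\mtp$ (\cite[Equation (1.13)]{KarlinRinott80} with $b_v=\mathrm{id}$, or equivalently the lattice condition is closed under products). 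Let $g(x_A,x_w)=f(x_A,x_w)h(x_A)$.

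Finally I would prove the scalar monotonicity: for $x_w\le x_w'$,
\[
P(X_A\in U\cd X_w=x_w)=\frac{\int g(x_A,x_w)\,\dd\mu_A(x_A)}{\int f(x_A,x_w)\,\dd\mu_A(x_A)}\ \le\ \frac{\int g(x_A,x_w')\,\dd\mu_A(x_A)}{\int f(x_A,x_w')\,\dd\mu_A(x_A)}.
\]
Cross-multiplying, this is exactly the inequality
\[
\Bigl(\int g(x_A,x_w)\,\dd\mu_A\Bigr)\Bigl(\int f(y_A,x_w')\,\dd\mu_A\Bigr)\ \le\ \Bigl(\int g(x_A,x_w')\,\dd\mu_A\Bigr)\Bigl(\int f(y_A,x_w)\,\dd\mu_A\Bigr),
\]
which follows by integrating over $(x_A,y_A)$ the pointwise bound obtained from the $\tp$-type comparison of $g$ and $f$ in the variable $x_w$; concretely, since both $g$ and $f$ are $\mtp$ on $\cX_A\times\{x_w,x_w'\}$ and $g\le f$ coordinatewise in the sense $g/f=h\in\{0,1\}$ is non-increasing in $x_w$ for fixed $x_A$ (it does not depend on $x_w$) and non-decreasing in $x_A$, the desired inequality is precisely the content of the basic composition/integration lemma for totally positive kernels, i.e.\ \cite[Corollary 2.1]{KarlinRinott80} or the preservation of $\mtp$ under integrating out a block (Proposition~\ref{prop:ClosedMC}(ii) applied to the unnormalized density $g$ on $\cX_A\times\{x_w,x_w'\}$). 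The main obstacle is making this last step clean without circularity: one must phrase the final inequality as a genuine $2\times 2$ total-positivity statement for the induced kernel on $\{x_w,x_w'\}$ after integrating out $x_A$, and invoke the Karlin–Rinott composition formula in the correct form rather than re-deriving the FKG-type inequality by hand. Once that is set up, the conditioning reductions and the layer-cake step are routine.
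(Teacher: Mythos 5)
There is a genuine gap at the key step. You assert that the indicator $h=\indic_U$ of an up-set $U\subseteq\cX_A$ is itself $\mtp$, and hence that $g=fh$ is $\mtp$ so that the final inequality follows from preservation of $\mtp$ under integrating out a block. This is false: up-sets are closed under $\vee$ but not under $\wedge$. Take $\cX_A=\{0,1\}^2$ and $U=\{(1,0),(0,1),(1,1)\}$, with $x=(1,0)$, $y=(0,1)$; then $h(x)h(y)=1$ while $h(x\wedge y)h(x\vee y)=h(0,0)h(1,1)=0$, so (\ref{eq:MTP2}) fails for $h$, and consequently $g=fh$ fails it too whenever $f>0$ at these points. (Only ``box'' up-sets $\{x_A:x_v>a_v\ \forall v\}$ have $\mtp$ indicators, and a general non-decreasing $\varphi$ does not decompose into those with positive weights.) Because of this, neither \cite[Corollary 2.1]{KarlinRinott80} nor Proposition~\ref{prop:ClosedMC}(ii) applies to $g$, and the cross-multiplied inequality you need is \emph{not} a marginalization-of-$\mtp$ statement. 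It is precisely Holley's inequality: the $\mtp$ property of $f$ shows that for $x_w\le x_w'$ the conditional densities $\mu_1=f(\cdot\cd x_w)$ and $\mu_2=f(\cdot\cd x_w')$ satisfy $\mu_2(x_A)\mu_1(y_A)\le\mu_1(x_A\wedge y_A)\mu_2(x_A\vee y_A)$, and one must then deduce stochastic dominance of $\mu_2$ over $\mu_1$. That deduction is a strictly stronger tool than the composition formula; it requires the four-functions theorem, a coupling argument, or equivalently the association property (\ref{eq_increasing}) already available in the paper: writing $r(x_A)=f(x_A\cd x_w')/f(x_A\cd x_w)$ (non-decreasing by $\mtp$ when $f>0$), one has $\E\{\varphi\cd x_w'\}=\E_{\mu_1}(\varphi r)/\E_{\mu_1}(r)\ge\E_{\mu_1}(\varphi)$ by FKG applied to the ($\mtp$) conditional law $\mu_1$. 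Your own closing remark about ``circularity'' correctly flags that something is off here, but the proposed fix (invoking the composition formula) cannot work.

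Your outer reductions are fine and match the standard treatment: the coordinate-at-a-time reduction via Proposition~\ref{prop:ClosedMC}, and the layer-cake reduction to up-set probabilities, are both correct (modulo routine care with ``almost all'' conditioning values). Note that the paper itself gives no proof of this proposition --- it cites \cite[Section 3.1]{sarkar} and \cite[Theorem 4.1]{KarlinRinott80}, whose argument is exactly the likelihood-ratio-plus-FKG one sketched above --- so to make your proof complete you should replace the final step by an explicit appeal to (\ref{eq_increasing}) for the conditional distribution, or to Holley's inequality, rather than to closure of $\mtp$ under marginalization.
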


We first show that any induced independence model of an $\mtp$ distribution is an upward-stable  and singleton-transitive compositional semigraphoid, i.e.\ (S1)--(S4) and (S6)--(S8) all hold.
\begin{thm}
\label{prop:intersection}
Any independence model $\mathcal{J}(P)$ induced by an $\mtp$ distribution $P$ is an upward-stable and singleton-transitive compositional semigraphoid.\end{thm}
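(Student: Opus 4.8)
The plan is as follows. Properties (S1)--(S4) hold for \emph{every} probabilistic independence model, so nothing is needed there, and once (S8) upward-stability is proved, (S6) composition is free from Lemma~\ref{lem:axioms}. Thus the real content is (S8) and (S7). For both, the strategy is to convert conditional independence into vanishing of conditional covariances. First reparametrise each continuous coordinate by a bounded strictly increasing differentiable map; by Proposition~\ref{prop:differentiable} this preserves the $\mtp$ property, it obviously does not change $\mathcal{J}(P)$, and afterwards all second moments are finite. By Proposition~\ref{prop:ClosedMC}(i), $P(\cdot\cd X_C=x_C)$ is $\mtp$ for almost all $x_C$, hence has associated coordinates; combining this with Theorem~\ref{th:newman} applied in each such conditional law yields the reformulation
\[
\langle A,B\cd C\rangle\in\mathcal{J}(P)\quad\Longleftrightarrow\quad \cov(X_u,X_v\cd X_C)=0\ \text{a.s. for all }u\in A,\ v\in B .
\]
Everything below uses this together with the monotonicity of conditional expectations from Proposition~\ref{thm_KarlinRinot}.

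For (S8) it suffices, by (S1)--(S4) and induction on the size of the set added, to show that $\langle A,B\cd C\rangle\in\mathcal{J}(P)$ implies $\langle A,B\cd C\cup\{w\}\rangle\in\mathcal{J}(P)$ for a single $w\notin A\cup B\cup C$. Fix $u\in A$, $v\in B$ and apply the law of total covariance, conditionally on $X_C$, splitting on $\sigma(X_w)$:
\[
\cov(X_u,X_v\cd X_C)=\E\!\left[\cov(X_u,X_v\cd X_{C\cup\{w\}})\cd X_C\right]+\cov\!\left(\E[X_u\cd X_{C\cup\{w\}}],\,\E[X_v\cd X_{C\cup\{w\}}]\cd X_C\right).
\]
The left-hand side is $0$ by hypothesis. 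The first term on the right is a conditional expectation of $\cov(X_u,X_v\cd X_{C\cup\{w\}})$, which is nonnegative a.s.\ because $P(\cdot\cd X_{C\cup\{w\}}=x_{C\cup\{w\}})$ is $\mtp$ and hence $X_u,X_v$ are associated there. For the second term, $\E[X_u\cd X_{C\cup\{w\}}]$ and $\E[X_v\cd X_{C\cup\{w\}}]$ are, given $X_C=x_C$, non-decreasing functions of $X_w$ by Proposition~\ref{thm_KarlinRinot} (take $\varphi(X_{A'})=X_u$ with $A'=V\setminus(C\cup\{w\})$, and likewise for $v$), and two non-decreasing functions of a single real variable have nonnegative covariance, so this term is also nonnegative a.s. Two nonnegative quantities summing to $0$ both vanish; vanishing of the first forces $\cov(X_u,X_v\cd X_{C\cup\{w\}})=0$ a.s., and as $u\in A,v\in B$ were arbitrary, $\langle A,B\cd C\cup\{w\}\rangle\in\mathcal{J}(P)$.

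For (S7), observe first that by (S8) the second hypothesis $\langle u,v\cd C\cup\{w\}\rangle\in\mathcal{J}(P)$ follows from the first, so it is enough to show that $\langle u,v\cd C\rangle\in\mathcal{J}(P)$ implies $\langle u,w\cd C\rangle\in\mathcal{J}(P)$ or $\langle v,w\cd C\rangle\in\mathcal{J}(P)$. Conditioning on $X_C=x_C$ and using Proposition~\ref{prop:ClosedMC}(i) reduces matters to the case $C=\varnothing$ in the trivariate $\mtp$ law of $(X_u,X_v,X_w)$, \emph{provided} one can rule out that the disjunct which holds varies with $x_C$. In the case $C=\varnothing$, running the same total-covariance split, vanishing of the second term reads $\cov\!\big(\E[X_u\cd X_w],\E[X_v\cd X_w]\big)=0$, where $\alpha(X_w):=\E[X_u\cd X_w]$ and $\beta(X_w):=\E[X_v\cd X_w]$ are non-decreasing. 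Now invoke the elementary fact that if $Z$ is a real random variable and $\alpha,\beta$ are non-decreasing with $\cov(\alpha(Z),\beta(Z))=0$, then $\alpha(Z)$ or $\beta(Z)$ is a.s.\ constant: by the Hoeffding identity $2\cov(\alpha(Z),\beta(Z))=\E[(\alpha(Z)-\alpha(Z'))(\beta(Z)-\beta(Z'))]$ with $Z'$ an independent copy, the integrand is everywhere nonnegative, hence a.s.\ zero, and if neither $\alpha(Z)$ nor $\beta(Z)$ were constant one would find a positive-probability rectangle of $(Z,Z')$ on which it is strictly positive. This gives $\cov(X_u,X_w)=0$ or $\cov(X_v,X_w)=0$, i.e.\ $u\cip w$ or $v\cip w$, as required.

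The hard part is the last step of (S7): promoting the per-slice dichotomy ``for a.a.\ $x_C$, $u\cip w$ or $v\cip w$ holds in $P(\cdot\cd X_C=x_C)$'' to the uniform statement needed for $\langle u,w\cd C\rangle\in\mathcal{J}(P)$ or $\langle v,w\cd C\rangle\in\mathcal{J}(P)$. Mere association of the conditional laws is not enough here; one must use the global $\mtp$ inequality, and the extra leverage I expect to use is that, by Proposition~\ref{thm_KarlinRinot}, $\E[X_u\cd X_w,X_C]$ and $\E[X_v\cd X_w,X_C]$ are monotone \emph{jointly} in $(X_w,X_C)$, which constrains how the set $\{x_C:\cov(X_u,X_w\cd X_C=x_C)>0\}$ can sit inside $\mathcal{X}_C$. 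A secondary, routine difficulty is bookkeeping the ``almost sure'' qualifiers consistently when passing between $P$ and the conditional laws $P(\cdot\cd X_C=x_C)$, and justifying the reduction to finite second moments via the bounded increasing reparametrisation.
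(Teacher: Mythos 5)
Your treatment of the semigraphoid part, of composition via Lemma~\ref{lem:axioms}, and of upward-stability (S8) is correct and is essentially the paper's argument: the same conditional total-covariance decomposition, with nonnegativity of the first term coming from association of the conditional $\mtp$ law and of the second term from monotonicity of conditional expectations (Proposition~\ref{thm_KarlinRinot}) together with univariate association. The only difference is the device used to make second moments finite and to translate independence into vanishing covariances: you apply a bounded strictly increasing reparametrisation and then invoke Theorem~\ref{th:newman} slice-by-slice in the conditional laws, whereas the paper replaces the variables by the indicators $Y_i=\indic(X_i>a_i)$ and varies the thresholds; both are legitimate, and your version also lets you prove the trivariate form of (S7) more cleanly (Hoeffding's identity applied directly to the two monotone conditional means, rather than the paper's bookkeeping over the threshold sets $U,V$ and their projections).

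The genuine gap is exactly the step you flag and leave open: promoting the slice-wise dichotomy ``for a.a.\ $x_C$, either $u\cip w$ or $v\cip w$ holds under $P(\cdot\cd X_C=x_C)$'' to the global statement $\langle u,w\cd C\rangle\in\mathcal{J}(P)$ or $\langle v,w\cd C\rangle\in\mathcal{J}(P)$; since the conclusion of (S7) is a disjunction, the which-disjunct-holds may a priori depend on $x_C$, and your appeal to joint monotonicity of $\E[X_u\cd X_w,X_C]$ in $(x_w,x_C)$ is only a declaration of intent, not an argument. You should know that the paper does not supply the extra leverage you are hoping for: its proof of singleton-transitivity consists of the single sentence that, using upward-stability (to discard the second hypothesis) together with closure of $\mtp$ under conditioning and marginalization (Proposition~\ref{prop:ClosedMC}), the property ``can be rephrased'' as the unconditional trivariate statement $1\cip 2\Rightarrow 1\cip 3$ or $2\cip 3$, which it then proves; there is no further discussion of uniformity of the disjunct across the slices $x_C$. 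So either you adopt the paper's reduction at face value, in which case your write-up is complete (and your trivariate argument is a genuine simplification), or you maintain that the slice-uniformity needs proof, in which case you must supply an additional argument that the paper itself does not contain --- as it stands, your proposal proves (S7) only for $C=\varnothing$.
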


\begin{proof}
We first note that any probabilistic independence model is a semi-graphoid~\cite{pea88}. Next,  we establish upward-stability. For this, it suffices to prove that ${u}\cip {v}\cd {C}$ implies ${u}\cip {v}\cd {C\cup \{w\}}$ for all $w\in V\setminus (C\cup \{u,v\})$. Since the $\mtp$ property is closed under marginalization, it follows that the marginal distribution of $X_{C\cup \{u,v,w\}}$ is $\mtp$. Further, because the $\mtp$ property is closed under conditioning, after conditioning on $C$, it suffices to consider only 3 variables and prove the following statement: If the distribution of $X=(X_{1},X_{2},X_{3})$ is $\mtp$, then  ${1}\cip {2}$ implies ${1}\cip {2}\cd {3}$.

Recall that $1\cip 2$ if and only if $\P(X_1> a_1,X_2> a_2)=\P(X_1> a_1)\P(X_2> a_2)$ for all $a_1,a_2\in \R$. Equivalently, defining $Y_i=\indic(X_i> a_i)$ this translates into $Y_1\cip Y_2$, which now must be satisfied for any choice of $a_1,a_2\in \R$. A similar condition holds for the conditional independence $1\cip 2 \cd 3$, in which case we require that $Y_1\cip Y_2\cd X_3$ for all $a_1,a_2\in \R$.

The advantage of working with the indicator functions is that they define bounded random variables, which implies the existence of moments. By Proposition \ref{prop:monotone} the vector $(Y_1,Y_2,X_3)$ is $\mtp$. Independence of $X_1$ and $X_2$, and the law of total covariance,  implies that for every choice of $a_1,a_2\in \R$
\begin{eqnarray*}
0 &=& \cov(Y_1, Y_2)\\
&=& \cov(\mathbb{E}(Y_1\cd X_3), \mathbb{E}(Y_2\cd X_3)) + \mathbb{E}(\cov(Y_1,Y_2\cd X_3)).
\end{eqnarray*}
By Proposition~\ref{thm_KarlinRinot}, $\mathbb{E}(Y_1\cd X_3=x_3)$ and $\mathbb{E}(Y_2\cd X_3=x_3)$ are almost everywhere non-decreasing and bounded functions of $x_3$ and hence their covariance exists and is non-negative by (\ref{eq_increasing}) and the fact that univariate random variables are always associated. Moreover, it follows from (\ref{eq_conditioning}) that $\cov(Y_1,Y_2\cd X_3=x_3)\geq 0$ for almost all $x_3$, and thus its expectation is non-negative. This means that we expressed zero as a sum of two non-negative terms and thus both terms must be zero. This implies that $\cov(Y_1,Y_2\cd X_3=x_3)= 0$ for almost all $x_3$. Hence, by Theorem~\ref{th:newman}, we obtain that $Y_1\cip Y_2\cd X_3$. Now by varying $a_1,a_2$ we conclude that $X_1\indep X_2\cd X_3$.

Having established upward-compatibility, composition now follows from Lemma~\ref{lem:axioms}. 

We finally prove singleton-transitivity. Using upward-stability this property can  be rephrased in a simpler form as
$$1\cip 2 \quad\Longrightarrow\quad 1\cip 3\quad\textrm{or}\quad 2\cip 3.$$
So we assume that $1\cip 2$. By the same argument as in the previous paragraph, ${\rm cov}(Y_1,Y_2)=0$ implies that  $\cov(\mathbb{E}(Y_1\cd X_{3}), \mathbb{E}(Y_2\cd X_{3}))=0$. By Theorem~\ref{thm_KarlinRinot}, both $f(X_3):=\mathbb{E}(Y_1\cd X_{3})$ and $g(X_3):=\mathbb{E}(Y_2\cd X_{3})$ are   non-decreasing functions. Their covariance is zero and can be rewritten as
$$
{\rm cov}(f(X_3),g(X_3))\;=\;\int_{\{x'>x\}}\big(f(x')-f(x)\big)\big(g(x')-g(x)\big)\,d(\mu_3(x)\otimes \mu_3(x')).
$$
Note that $\{x'>x\}\supseteq \{f(x')>f(x)\}\cap \{g(x')>g(x)\}$ and so the integral on the right-hand side, which is equal to zero, is bounded from below by $$
\int_{\{f(x')>f(x)\}\cap \{g(x')>g(x)\}}\big(f(x')-f(x)\big)\big(g(x')-g(x)\big)\,d(\mu_3(x)\otimes \mu_3(x')),
$$
which is strictly positive unless the set $\{f(x')>f(x)\}\cap \{g(x')>g(x)\}$ has measure zero. This set is the set of all pairs $(x,x')$ such that $x'>x$ and $f(x')> f(x)$, $g(x')> g(x)$, and its measure is half the measure of the set of all $(x,x')$ such that $f(x')\neq f(x)$ and $g(x')\neq g(x)$. This measure is zero only if either $f$ or $g$ is constant almost everywhere.
 
It follows that for every $a_1,a_2\in \R$ either $\,\mathbb{E}(Y_1\cd X_3)=\P(X_1> a_1)\,$ or $\,\mathbb{E}(Y_2\cd X_3)=\P(X_2> a_2)\,$. 
Let $U\subseteq \R^2$ be the set of all $(a_1,a_2)$ such that the former equality holds and $V$ be the set such that the latter holds.  Let $\pi_i$ denote the projection on the $i$-th coordinate in $\R^2$. We have $U\cup V=\R^2$ and so if $\pi_1(U)\neq \R$ then $\pi_2(V)=\R$. This implies that $\pi_1(U)=\R$ or $\pi_2(V)=\R$. For simplicity assume that the latter holds but the general argument is the same. If $\pi_2(V)=\R$ then for every $a_2\in \R$ there exists $a_1$ such that $\,\mathbb{E}(Y_2\cd X_3)=\P(X_2> a_2)\,$, or equivalently $\P(X_2>a_2|X_3)=\P(X_2>a_2)$. We conclude that $2\indep 3$. This, up to symmetry, implies  that $1\cip 3$ or $2\cip 3$.
\end{proof}

We now analyze the intersection property.  It is important to note that an $\mtp$ independence model is not necessarily a graphoid, as the following simple example shows.

\begin{ex}\rm\label{ex:intcounex}
Consider the binary $\mtp$ distribution with
$$p_{000}=p_{111}=\frac{1}{2}.$$
Then $1\cip 2\cd 3$ and $1\cip 3\cd 2$, but $1\notindependent (2,3)$, and therefore, the intersection property does not hold. \qed
\end{ex}

As a consequence of the earlier mentioned result by Peters \cite{peters:15}, any $\mtp$ distribution with continuous density and coordinate-wise connected support is an upward-stable and singleton-transitive compositional graphoid.

We conclude this section with the following property of $\mtp$ distributions.

\begin{thm}\label{th:block}
Let the distribution of $X$ be $\mtp$ with  none of $X_v$ having a degenerate distribution. Then $X$ can be decomposed into independent components such that within each component all variables are mutually marginally dependent.
\end{thm}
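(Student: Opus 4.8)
The plan is to build the decomposition directly from the marginal independence structure of $X$, using Theorem~\ref{th:newman} and upward-stability. Consider the simple undirected graph $H$ on the vertex set $V$ in which $u$ and $v$ are joined by an edge precisely when $X_u \notindependent X_v$, i.e.\ when the two variables are marginally dependent. Since each $X_v$ is non-degenerate, and since $X$ is $\mtp$ hence associated by~(\ref{eq_increasing}), Theorem~\ref{th:newman} tells us that $X_u \notindependent X_v$ is equivalent to ${\rm cov}(X_u,X_v)\neq 0$, provided second moments exist; to avoid moment assumptions one argues instead with the bounded indicators $Y_u^{a}=\indic(X_u>a)$, exactly as in the proof of Theorem~\ref{prop:intersection}, so that marginal independence is detected through covariances of bounded variables. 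Let $A_1,\dots,A_k$ be the connected components of $H$. The claim is that $X_{A_1},\dots,X_{A_k}$ are mutually independent and that within each $A_i$ all variables are pairwise (hence the stated ``mutually marginally'') dependent; the latter is immediate because within a connected component any two vertices are joined by a path, but we actually need the stronger conclusion that they are \emph{pairwise} dependent, which is precisely how the edges of $H$ were defined, so this part requires that each $A_i$ induces a \emph{complete} subgraph of $H$.

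The key step, and the one I expect to be the main obstacle, is therefore showing that each connected component of $H$ is in fact a clique: if $u,v,w$ lie in a component with $X_u\notindependent X_w$ and $X_w\notindependent X_v$, then $X_u\notindependent X_v$. Equivalently, in contrapositive form, $X_u\indep X_v$ should force $X_u\indep X_w$ or $X_v\indep X_w$ — but this is exactly singleton-transitivity in the form derived in the proof of Theorem~\ref{prop:intersection} (applied with empty conditioning set): $1\cip 2 \Rightarrow 1\cip 3$ or $2\cip 3$. Thus transitivity of marginal dependence is not obvious but follows from the already-established properties of $\mathcal{J}(P)$. Iterating the pairwise statement along a path shows that ``marginally dependent'' restricted to a connected component of $H$ is an equivalence-type relation making the component complete, so within each $A_i$ every pair of variables is marginally dependent, as required.

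It remains to prove mutual independence of the blocks $X_{A_1},\dots,X_{A_k}$. Fix $i\neq j$. For any $u\in A_i$ and $v\in A_j$ there is no edge of $H$ between them, so $X_u\indep X_v$. To pass from pairwise independence across blocks to joint independence $X_{A_1}\indep\cdots\indep X_{A_k}$, I would first note that by Proposition~\ref{prop:ClosedMC}(ii) the marginal $X_{A_i\cup A_j}$ is $\mtp$ and hence associated, so Theorem~\ref{th:newman} (again via bounded indicators to sidestep moments, or directly when $\E|X_v|^2<\infty$) upgrades ``${\rm cov}(X_u,X_v)=0$ for all $u\in A_i,v\in A_j$'' to $X_{A_i}\indep X_{A_j}$. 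Then one assembles the full mutual independence by induction on $k$: having shown $X_{A_1},\dots,X_{A_{k-1}}$ are mutually independent, it suffices to show $(X_{A_1},\dots,X_{A_{k-1}})\indep X_{A_k}$, which again follows from Theorem~\ref{th:newman} applied to the associated vector $X_{A_1\cup\dots\cup A_k}=X$ itself, since ${\rm cov}(X_u,X_v)=0$ for every $u\in A_1\cup\dots\cup A_{k-1}$ and $v\in A_k$. Finally, mutual independence together with the semi-graphoid property (decomposition) and the definition of the $A_i$ gives the stated decomposition into independent components each of whose variables are mutually marginally dependent. The only genuinely new ingredient beyond Theorems~\ref{th:newman} and~\ref{prop:intersection} is the clean organization of these facts; the delicate point throughout is the routine but necessary replacement of $X_v$ by bounded functions $\indic(X_v>a)$ whenever Theorem~\ref{th:newman} is invoked without a second-moment hypothesis.
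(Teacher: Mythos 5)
Your proposal is correct and follows essentially the same route as the paper: the paper defines $u\sim v$ via nonzero covariance of the indicators $Y_u,Y_v$, proves transitivity of $\sim$ by combining upward-stability with singleton-transitivity from Theorem~\ref{prop:intersection} (exactly your clique argument in contrapositive form), and invokes Theorem~\ref{th:newman} to convert block-diagonal covariance into independence of the blocks. Your explicit induction assembling mutual independence of the $k$ blocks merely spells out what the paper leaves implicit.
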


\begin{proof} As in the previous proof we define
$Y_i=\indic(X_i> a_i)$
and by Theorem~\ref{th:newman} it suffices to prove that the covariance matrix of $Y$ is block diagonal with strictly positive entries in each block.
We write $u\sim v$ if the covariance between $Y_{u}$ and $Y_{v}$ is non-zero and we show that $u\sim v$ is an equivalence relation and thus induces a partition of $V$ into independent blocks. It is clear that $u\sim u$ and $u\sim v$ whenever $v\sim u$. It remains to show that $u\sim v$ and $v\sim w$ imply $u\sim w$. But if $u\not\sim w$, we have $\sigma_{uw}=0$ and thus $u\cip w$. Using upward-stability from Theorem~\ref{prop:intersection} yields $u\cip w\cd v$ and singleton-transitivity yields $u\cip v$ or $v\cip  w$,
which contradicts that $u\sim v$ and $v\sim w$.
\end{proof}

\section{Faithfulness and total positivity}
\label{sec:Faithful}

In the following we shall write $A\gse B\cd C$ for the graph separation $\langle A,B\cd C\rangle\in \mathcal{J(G)}$ and $A\cip B\cd C$ for the relation $\langle A,B\cd C\rangle\in \mathcal{J(P)}$ in the independence model generated by~$P$. For a graph $G=(V,E)$, an independence model $\mathcal{J}$ defined over $V$ satisfies the \emph{global Markov property} w.r.t.\ a graph $G$, if for  disjoint subsets $A$, $B$, and $C$ of $V$ the following holds  $$A\gse B\cd C \implies \langle A,B\cd C\rangle\in \mathcal{J}.$$
If $\mathcal{J}(P)$ satisfies the global Markov property w.r.t.\ a graph $G$, we also say that \emph{$P$ is Markov} w.r.t.\ $G$.

We say that an independence model $\mathcal{J}$ is \emph{probabilistic} if there is a distribution $P$ such that  $\mathcal{J}= \mathcal{J}(P)$. We then also say that $P$ is \emph{faithful} to $\mathcal{J}$. If $P$ is faithful to $\mathcal{J}(G)$ for a graph $G$, then we also say that $P$ is \emph{faithful to  $G$}. Thus, if $P$ is faithful to $G$ it is also Markov w.r.t.\ $G$.

In this section we examine the faithfulness property for $\mtp$ distributions. Let $P$ denote a distribution on $\cX$.  The \emph{pairwise independence graph}  of $P$ is the undirected graph $G(P)=(V, E(P))$ with
$$uv\notin E(P) \quad\Longleftrightarrow \quad u\cip v\cd {V\setminus\{u,v\}}.$$
A distribution $P$ is said to satisfy the \emph{pairwise Markov property} w.r.t.~an undirected graph $G=(V,E)$ if
$$uv\notin E\quad \Longrightarrow \quad u\cip v\cd {V\setminus\{u,v\}}.$$
Thus any distribution $P$ satisfies the pairwise Markov property w.r.t.~its pairwise independence graph $G(P)$; indeed, $G(P)$ is the smallest graph that makes $P$  pairwise Markov.

Generally, a distribution may be pairwise Markov w.r.t.\ a graph without being globally Markov.
However, if an $\mtp$ distribution $P$ satisfies the coordinate-wise connected support condition, in particular if it is strictly positive, and since these are sufficient conditions for the intersection property to hold, then pairwise and global Markov properties are equivalent; see \cite{sadl14}. We prove in the following result that if the pairwise-global equivalence is already established, then $P$ is in fact faithful to $G(P)$.

\begin{thm}
\label{th:Faith}
If $P$ is $\mtp$ and its independence model is a graphoid, then $P$ is faithful to its pairwise independence graph $G(P)$.
\end{thm}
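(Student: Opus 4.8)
The plan is to show that every graph-separation statement holding in $G(P)$ is matched by a conditional independence in $\mathcal{J}(P)$ (the Markov direction) and, conversely, that every conditional independence in $\mathcal{J}(P)$ is reflected by a separation in $G(P)$ (the completeness/faithfulness direction). The Markov direction is standard: a distribution whose independence model is a graphoid and which is pairwise Markov with respect to $G$ is automatically globally Markov with respect to $G$ (this is the classical pairwise-implies-global argument of Pearl--Paz, which only uses the graphoid axioms (S1)--(S5)). Since $G(P)$ is by construction the minimal graph for which $P$ is pairwise Markov, this already gives $A\gse B\cd C\implies A\cip B\cd C$. So the real content is the converse.

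For the converse, I would argue by contraposition: suppose $A$ is \emph{not} separated from $B$ by $C$ in $G(P)$; I must produce $a\in A$, $b\in B$ with $a\notindependent b\cd C$, which by the semigraphoid decomposition property (S2) suffices to conclude $A\notindependent B\cd C$. Non-separation means there is a path $\pi=(a=v_0,v_1,\dots,v_k=b)$ in $G(P)$ from some $a\in A$ to some $b\in B$ avoiding $C$. The key step is to reduce to a single edge: by singleton-transitivity (S7), established for $\mtp$ distributions in Theorem~\ref{prop:intersection}, together with upward-stability (S8), one can ``contract'' the path. Concretely, I expect the crucial lemma to be: if $u\notindependent w$ and $w\notindependent x$ in the relevant conditional model, then $u\notindependent x$ \emph{or} the pair is still connected — more precisely, upward-stability lets me condition everything down to $C$, and then for a length-2 path $u-w-x$, singleton-transitivity in its rephrased form ($1\cip 2\Rightarrow 1\cip 3$ or $2\cip 3$) is applied in contrapositive: $u\notindependent w\cd C$ and $w\notindependent x\cd C$ together with $u\cip x\cd C$ would force (after adding $w$ to the conditioning set via upward-stability, which does nothing to change $u\cip x$) a contradiction with the structure of the path. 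Iterating shrinks the path until $a$ and $b$ are adjacent in $G(P)$.

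Once $a$ and $b$ are adjacent in $G(P)$, i.e.\ $ab\in E(P)$, I need $a\notindependent b\cd C$ for the particular conditioning set $C$ at hand (which avoids $\{a,b\}$). Here is where $\mtp$ does real work beyond the axioms: $ab\in E(P)$ means $a\notindependent b\cd V\setminus\{a,b\}$, and I must upgrade this to non-independence given the possibly much smaller set $C$. This is exactly the failure of a generic distribution, but for $\mtp$ distributions upward-stability (Theorem~\ref{prop:intersection}) runs the other way: if we had $a\cip b\cd C$, then upward-stability would give $a\cip b\cd C\cup\big(V\setminus(C\cup\{a,b\})\big)=a\cip b\cd V\setminus\{a,b\}$, contradicting $ab\in E(P)$. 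So $a\notindependent b\cd C$, as needed. Combining: non-separation in $G(P)$ implies dependence in $\mathcal{J}(P)$, which is the faithfulness statement.

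The main obstacle I anticipate is making the path-contraction argument fully rigorous when $A$, $B$, $C$ are sets rather than singletons and when intermediate nodes of the path may themselves lie in $A\cup B$; one has to be careful to choose $\pi$ to be a \emph{shortest} such path so that its interior vertices avoid $A\cup B\cup C$, and then to track which conditioning set the inductive non-independence statement is carried with at each contraction step — upward-stability guarantees we never lose a dependence when we enlarge the conditioning set, and singleton-transitivity supplies the branching alternative, but the bookkeeping of ``which endpoint of the shortened path we end up with'' needs the alternatives from (S7) to be handled symmetrically. The rest — invoking the graphoid hypothesis for the Markov direction and the decomposition axiom (S2) to pass from singleton dependence back to set dependence — is routine.
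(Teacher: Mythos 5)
Your proposal is correct and follows essentially the same route as the paper: the Markov direction via the pairwise-to-global graphoid argument, then for faithfulness taking a path avoiding $C$, using the contrapositive of upward-stability to turn each edge $v_kv_{k+1}\in E(P)$ into $v_k\notindependent v_{k+1}\cd C$, and chaining these along the path with singleton-transitivity (plus upward-stability) to reach $u\notindependent v\cd C$ and hence $A\notindependent B\cd C$. The shortest-path bookkeeping you worry about is not actually needed — the chaining argument only requires the path to avoid $C$, not $A\cup B$.
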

\begin{proof}
By Theorem~3.7 in~\cite{lau96} it follows that $P$ is globally Markov w.r.t.\ $G(P)$.

To establish faithfulness, we consider disjoint subsets $A$, $B$, and $C$ so that $C$ does not separate $A$ from $B$ in $G(P)$. We need to show that $A\notindependent B\cd C$.

First, let $uv\in E$. Then $u\notindependent v\cd {V\setminus\{u,v\}}$ and hence by upward-stability as shown in Theorem~\ref{prop:intersection}, $u\notindependent v\cd C$ for any $C\subset V\setminus\{u,v\}$.

Since $C$ does not separate $A$ from $B$, there exists $u\in A$ and $v\in B$ and a path $u=v_1, v_2,\dots , v_r = v$ such that $v_k\notin C$ for all $k=1,\dots, r$, and $v_k v_{k+1}\in E$ for all $k=1,\dots, r-1$. By the previous argument we obtain that ${v_k}\notindependent {v_{k+1}}\cd C$ for all $k=1,\dots, r-1$. By singleton-transitivity, ${v_1}\notindependent {v_{2}}\cd C$ and ${v_2}\notindependent {v_{3}}\cd C$ imply that ${v_1}\notindependent {v_3}\cd C$. Repeating this argument yields $u\notindependent v\cd C$ and hence $A\notindependent B\cd C$.
\end{proof}
Note that this was also shown by Slawski and Hein~\cite{slawski2015estimation} in the case of a Gaussian $\mtp$ distribution. In fact in the Gaussian case it follows readily since conditional covariances between any pair of variables can be obtained through adding (non-negative) partial correlations along paths in $G(P)$ \citep{jones:west:92}, see also \cite{WerCox98}.

Notice that if $X$ has coordinate-wise connected support, then Theorem~\ref{th:block} is a direct corollary of Theorem~\ref{th:Faith}. This is because if a distribution is faithful to an undirected graph, then the statement of Theorem \ref{th:block} obviously holds.  However, the latter theorem is still interesting as it covers cases that Theorem \ref{th:Faith} does not; such as that of Example~\ref{ex:intcounex}.

In Section~\ref{sec:MTP} we postponed to show that the stability of the $\mtp$ property under coarsening as established in (iii) of Proposition~\ref{prop:ClosedMC} does not imply that coarsening preserves conditional independence relations for $\mtp$ distributions. We demonstrate this in the following example.

\begin{ex}\rm\label{ex:distortion} Consider the trivariate discrete distributions of $(I,J,K)$ where $I$ and $K$ are binary taking values in $\{0,1\}$ whereas $J$ is ternary with state space $\{0,1,2\}$ given as follows
\[p_{ijk}=\theta_{i|j}\phi_{k|j}\psi_j,\]
where $\psi_j=1/3$ for all $j$, $\theta_{1|0}= \phi_{1|0}=1-\theta_{0|0}=1-\phi_{0|0}=1/4$, $\theta_{1|1}= \phi_{1|1}=1-\theta_{0|1}=1-\phi_{0|1}=1/3$, and $\theta_{1|2}= \phi_{1|2}=1-\theta_{0|2}=1-\phi_{0|2}=1/2$. This distribution is easily seen to be $\mtp$ which also follows from Proposition~\ref{prop:alongsingleton} below. By construction, it also satisfies $I\indep K\cd J$ so that its concentration graph has edges $IJ$ and $JK$.

Now define the binary variable $L$ by monotone coarsening of $J$ so that $L=0$ if $J=0$ and $L=1$ if $J\in \{1,2\}$. Letting $q_{ilk}$ denote the joint distribution of $(I,L,K)$ we get for example
\[q_{010}=p_{010}+p_{020}=(\theta_{0|1}\phi_{0|1}+\theta_{0|2}\phi_{0|2})/3=(4/9+1/4)/3=25/108,\]
and similarly
\[q_{011}=q_{110}=17/108,\quad q_{111}=13/108\] so that the odds-ratio between $I$ and $J$ conditional on $\{L=1\}$ becomes
\[\theta=\frac{q_{010}q_{111}}{q_{110}q_{011}}=\frac{13\times 25}{17^2}=\frac{325}{289}>1.\]
Hence, after coarsening, the conditional association between $I$ and $J$ given the third variable changes from absent to positive. Note that the $\mtp$ property ensures non-negativity of the distorted association.

Clearly, the distribution after coarsening remains faithful to its concentration graph, but coarsening changes the latter to become the complete graph on $V=\{I,L,K\}$. \qed
\end{ex}

For completeness of Theorem \ref{th:Faith} it is important to show that any concentration graph is realizable by an $\mtp$ distribution. We prove this fact in the special case of Gaussian distributions.
\begin{prop}
Any undirected graph $G$ is realizable as the concentration graph $G(P)$ of some $\mtp$ Gaussian distribution.
\end{prop}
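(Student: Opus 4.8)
The plan is to construct, for a given undirected graph $G=(V,E)$, a positive definite matrix $K$ that is simultaneously an M-matrix and has exactly the edge set $E$ as its pattern of nonzero off-diagonal entries; by the characterization of Gaussian $\mtp$ distributions recalled in Section~\ref{sec:gaussian}, the Gaussian distribution with concentration matrix $K$ is then $\mtp$, and its pairwise independence graph $G(P)$ is precisely $G$ since $uv\notin E(P)$ if and only if $k_{uv}=0$.

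First I would set $k_{uv}=-\varepsilon$ for every edge $uv\in E$, $k_{uv}=0$ for every non-edge, and $k_{vv}=1$ for every $v\in V$, where $\varepsilon>0$ is a small constant to be chosen. Conditions (i) and (ii) for an M-matrix in Section~\ref{sec:gaussian} are then immediate: the diagonal entries are positive and the off-diagonal entries are non-positive. Moreover the off-diagonal support of $K$ is exactly $E$ by construction. It remains to ensure that $K$ is positive definite, which is needed for $K$ to be the inverse of a genuine covariance matrix. Writing $K=I-\varepsilon A$, where $A$ is the adjacency matrix of $G$, positive definiteness holds as soon as $\varepsilon\|A\|<1$, i.e.\ as soon as $\varepsilon$ is smaller than the reciprocal of the largest eigenvalue of $A$ (for instance $\varepsilon<1/|V|$ suffices, since the spectral radius of $A$ is at most the maximum degree, which is at most $|V|-1$). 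With such a choice of $\varepsilon$, the matrix $K$ is a positive definite M-matrix, so $\Sigma=K^{-1}$ is a valid covariance matrix and the corresponding centered Gaussian $P$ is $\mtp$.

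Finally I would check that $G(P)=G$. Since $P$ is Gaussian with strictly positive density, its independence model is a graphoid, and $u\cip v\cd V\setminus\{u,v\}$ holds if and only if the $(u,v)$ entry of the concentration matrix $K$ vanishes; hence $uv\notin E(P)$ exactly when $k_{uv}=0$, which by our construction happens exactly when $uv\notin E$. Therefore $E(P)=E$ and $P$ is an $\mtp$ Gaussian distribution with concentration graph $G$.

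I do not expect a serious obstacle here: the only quantitative point is choosing $\varepsilon$ small enough to guarantee positive definiteness, and any crude bound on the spectral radius of the adjacency matrix (such as the maximum degree) does the job. One should just be slightly careful to note that the off-diagonal entries being strictly negative on edges is what pins down $E(P)$ to be exactly $E$ rather than a subgraph, and that this is automatic from the construction since $-\varepsilon\neq 0$.
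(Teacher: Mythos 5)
Your construction $K=I-\varepsilon A$ with $\varepsilon$ small enough for positive definiteness is exactly the paper's proof; you merely make the choice of $\varepsilon$ quantitative via the spectral radius and spell out the final identification of $G(P)$ with $G$. Correct and essentially identical in approach.
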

\begin{proof}
Let $A$ be the adjacency matrix of $G$, that is, $A_{ij}=1$ if and only if $(i,j)$ is an edge of $G$. Because $G$ is undirected, A is symmetric. Since the set of positive definite matrices is open and the identity matrix is positive definite, it follows that $K=I-\epsilon A$ is positive definite if $\epsilon>0$ is sufficiently small. By construction, $K$  is an M-matrix and its non-zero elements correspond to the edges of the graph $G$.	
\end{proof}

\section{Special instances of total positivity}
\label{sec:factorization}

We conclude this paper with a section on how to construct $\mtp$ distributions from a collection of smaller $\mtp$ distributions, a brief discussion of conditions for the $\mtp$ property of discrete distributions in terms of log-linear interaction parameters, and characterizing conditional Gaussian distributions which are $\mtp$.

\subsection{Singleton separators}

Let $A,B\subset V$. We then say that two random variables $X_A$ and $X_B$ with distributions $P_A$ and $P_B$ are \emph{consistent} if the distribution of $X_{A\cap B}$ is the same under $P_A$ as under $P_B$. Then one can define a new distribution denoted by $P_A\star P_B$ and known as the \emph{Markov combination} of $P_A$ with density $f$ and  $P_B$ with density  $g$ (see~\cite{dawid:lauritzen:93}). Its density is denoted by $f\star g$ and given by
$$ (f\star g)(x_{A\cup B})=\frac{f(x_A)g(x_B)}{h(x_{A\cap B})}.$$ Here, $h$ denotes the density of $X_{A\cap B}$, common to $P_A$ and $P_B$. In the following, we show that the Markov combination of two $\mtp$ distributions is again $\mtp$ as long as they are glued together over a 1-dimensional margin.

\begin{prop}\label{prop:alongsingleton} Suppose that $|A\cap B|=1$. Then the Markov combination $P_A\star P_B$ of a consistent pair of distributions $P_A$ and $P_B$  is $\mtp$ if and only if $P_A$ and $P_B$ are both $\mtp$.
\end{prop}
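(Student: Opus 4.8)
The plan is to work directly with the $\mtp$ inequality~\eqref{eq:MTP2} for the combined density $f\star g$ on $\cX_{A\cup B}$, exploiting that $|A\cap B|=1$ means the ``gluing'' coordinate is a single totally ordered variable. Write $A\cap B=\{s\}$, $A'=A\setminus\{s\}$, $B'=B\setminus\{s\}$, so a generic point is $x=(x_{A'},x_s,x_{B'})$. For two points $x,y\in\cX_{A\cup B}$ the lattice operations act coordinatewise, so $(x\wedge y)$ and $(x\vee y)$ split along the three blocks $A',\{s\},B'$ in the obvious way. The key observation is that the $h$-denominators factor cleanly: writing $a=x_s$, $b=y_s$, we have $h(x_{A\cap B})h(y_{A\cap B})=h(a)h(b)$ and $h((x\wedge y)_{s})h((x\vee y)_s)=h(a\wedge b)h(a\vee b)$, and since $h$ is a univariate density it is automatically $\tp$ (any univariate function trivially satisfies the one-variable $\mtp$ condition, indeed with equality up to the sign convention), so $h(a)h(b)\le h(a\wedge b)h(a\vee b)$ — actually here we need the reverse-type control, so the cleanest route is to assume strict positivity of $h$ on the relevant set and then the factor $h(a\wedge b)h(a\vee b)/(h(a)h(b))$ can be isolated and compared.

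Concretely, for the ``if'' direction I would write
$$
(f\star g)(x\wedge y)\,(f\star g)(x\vee y)
=\frac{f((x\wedge y)_A)\,f((x\vee y)_A)}{h(a\wedge b)\,h(a\vee b)}\cdot g((x\wedge y)_B)\,g((x\vee y)_B),
$$
and since $(x\wedge y)_A$, $(x\vee y)_A$ are exactly $x_A\wedge y_A$, $x_A\vee y_A$ (the meet/join of $x_A,y_A$ in $\cX_A$), the $\mtp$ property of $f$ gives $f(x_A\wedge y_A)f(x_A\vee y_A)\ge f(x_A)f(y_A)$, and likewise for $g$ on the $B$-block. Multiplying these and dividing by $h(a)h(b)\,/\,(h(a\wedge b)h(a\vee b))$ we would get $(f\star g)(x\wedge y)(f\star g)(x\vee y)\ge (f\star g)(x)(f\star g)(y)\cdot \bigl(h(a)h(b)\bigr)/\bigl(h(a\wedge b)h(a\vee b)\bigr)$, so it remains to check that $h(a)h(b)\le h(a\wedge b)h(a\vee b)$. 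If $a=b$ or $a,b$ are comparable this is an identity; in all cases it is the univariate $\tp$ property of $h$, which holds because $h$ is the marginal of an $\mtp$ distribution on one variable and one-variable densities are always $\tp$ (the condition $h(a)h(b)\le h(a\wedge b)h(a\vee b)$ becomes $h(a)h(b)\le h(a)h(b)$ after reordering $a,b$). Here I should be careful to state the mild non-degeneracy/positivity hypothesis needed for the division to be legitimate — consistency already forces $h$ to be a bona fide density, and one restricts attention to the support where $f,g$ (hence $h$) are positive, exactly as in the interval-support discussion of Section~\ref{sec:MTP}.

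For the ``only if'' direction I would use Proposition~\ref{prop:ClosedMC}(ii): if $f\star g$ is $\mtp$ then its marginal on $A$ is $\mtp$, and that marginal is precisely $f$ (a defining property of the Markov combination — integrating out $x_{B'}$ returns $f(x_A)$ since $\int g(x_B)/h(x_s)\,\dd\mu_{B'}(x_{B'})=1$); symmetrically the $B$-marginal is $g$. Hence $f$ and $g$ are both $\mtp$. The main obstacle I anticipate is purely bookkeeping around supports: making sure the denominators $h$ never vanish where they are divided, and that the meet/join of $x_A$ and $y_A$ stays inside $\cX_A$ (which is automatic since $\cX_A$ is a product of totally ordered sets), so that the $\mtp$ inequalities for $f$ and $g$ genuinely apply at the required arguments; the algebra itself is a one-line factorization once the index blocks $A',\{s\},B'$ are set up.
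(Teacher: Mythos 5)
Your argument is correct and is essentially the paper's proof: the ``only if'' direction is marginalization via Proposition~\ref{prop:ClosedMC}(ii), and the ``if'' direction hinges on exactly the observation that a singleton overlap makes the denominator $h$ a function of one coordinate, so that $h(a\wedge b)\,h(a\vee b)=h(a)\,h(b)$ identically and the $\mtp$ inequalities for $f$ and $g$ carry over block by block. The paper merely packages your explicit computation into two cited facts from Karlin and Rinott---that a product of $\mtp$ functions is $\mtp$, and that multiplying by a positive function of a single variable preserves $\mtp$, cf.~\eqref{eq:prodtrans}---which also sidesteps the bookkeeping about where $h$ vanishes.
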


\begin{proof}
Since $P_A$ and $P_B$ are marginal distributions of $P_A\star P_B$ and the $\mtp$ condition is preserved under marginalization, we only need to prove one direction. Assume that $P_A$ and $P_B$ are $\mtp$. The product of $\mtp$ functions is an $\mtp$ function; see, e.g. Proposition 3.3 in \cite{KarlinRinott80} for this basic result. This implies that  $(fg)(x)=f(x_A)g(x_B)$ is $\mtp$. Now, if $A\cap B$ is a singleton, then also $f\star g$ is $\mtp$ because multiplying by functions of a single variable preserves the $\mtp$ property; c.f. (\ref{eq:prodtrans}).
\end{proof}

For example,  Proposition~\ref{prop:alongsingleton} implies that for the fitted model in Example~\ref{ex:laryngeal} we only need to check the $\mtp$ condition in each of the two clique marginals $\{1,2,3\}$ and $\{1,4,5\}$ to verify that the fitted distribution is $\mtp$. Since the fitted distribution is positive, this involves only $6+6=12$ log-odds-ratios, see the discussion of (\ref{eq:3binary}) in Section~\ref{sec:binary}.  In addition, as there are only pairwise interactions in the $\{1,2,3\}$-marginal, conditional log-odds-ratios for any pair of these variables are constant in the third variable and hence we actually only need to check $3+6=9$ such ratios to verify the $\mtp$ property for the model fitted to the laryngeal cancer data; see also Theorem~\ref{prop:ifpsimtp} below.

Unfortunately, the conclusion in Proposition~\ref{prop:alongsingleton} does not hold in general if $|A\cap B|>1$, as we show in the following example.

\begin{ex}\rm\label{ex:simpleonlysingl}
Suppose that $A=\{1,2,3\}$ and $B=\{2,3,4\}$, and let $X=(X_1,X_2,X_3,X_4)\in \{0,1\}^4$. Consider the following distribution:
{\small\begin{align*}
[p_{0000}, p_{0001}, p_{0010}, p_{0011}, p_{0100}, p_{0101}, p_{0110}, p_{0111}]&=[1,2,2,20,2,20,20,400]/Z,\\
[p_{1000}, p_{1001}, p_{1010}, p_{1011}, p_{1100}, p_{1101}, p_{1110}, p_{1111}]&=[2,4,20,200,20,200,400,8000]/Z,
\end{align*}}%
where the normalizing constant $Z=9313$. It is easy to check that for every $i,j,k,l\in\{0,1\}$ the following holds
$$
p_{ijkl}\;=\;\frac{p_{ijk+}p_{+jkl}}{p_{+jk+}}.
$$
Hence, the distribution $P=[p_{ijkl}]$ can be obtained as the Markov combination of two distributions, namely $p_{ijk+}$ over $\{1,2,3\}$ and $p_{+jkl}$ over $\{2,3,4\}$. One can also easily check that both these distributions are $\mtp$. However, since
\begin{eqnarray*}
p_{(1,1,0,1)\wedge (1,0,1,1)}p_{(1,1,0,1)\vee (1,0,1,1)} - p_{(1,1,0,1)}p_{(1,0,1,1)} &=& \\
p_{1001}p_{1111}-p_{1101}p_{1011}&=&-\frac{8000}{9313^2},
\end{eqnarray*}
$P$ is not $\mtp$. \qed
\end{ex}

As a direct consequence of Proposition \ref{prop:alongsingleton} we obtain the following result for \emph{decomposable graphs}, which are graphs where there is no cycle of length more than three such that all its non-neighboring nodes  (on the cycle) are not adjacent (see e.g.~\cite{lau96} for a review).

\begin{cor}\label{th:deconly} Let $G$ be a decomposable graph such that the intersection of any two cliques is either empty or a singleton. Let $P$ be a distribution that is Markov w.r.t.~$G$. Then $P$  is $\mtp$ if and only if the marginal distribution  over each clique is $\mtp$.
\end{cor}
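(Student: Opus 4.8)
The plan is to prove this by induction on the number of cliques of $G$, using Proposition~\ref{prop:alongsingleton} as the inductive step. First I would recall the standard fact about decomposable graphs: if $G$ is decomposable with cliques $C_1,\ldots,C_k$, there is a perfect sequence $C_1,\ldots,C_k$ such that, setting $H_j = C_1\cup\cdots\cup C_j$ and $S_j = C_j\cap H_{j-1}$, each separator $S_j$ is complete in $G$ and is contained in some single clique $C_{i(j)}$ with $i(j)<j$. Under our additional hypothesis that any two cliques meet in at most a singleton, each such $S_j$ is either empty or a single vertex. Moreover, if $P$ is Markov with respect to $G$, then $P$ decomposes as a sequence of Markov combinations along these separators: concretely, the density factorizes as $f = \prod_j f_{C_j} \big/ \prod_j h_{S_j}$, and $P_{H_j} = P_{H_{j-1}} \star P_{C_j}$ with the intersection $H_{j-1}\cap C_j = S_j$.

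The forward direction is immediate: since each clique marginal $P_{C}$ is a marginal of $P$, Proposition~\ref{prop:ClosedMC}(ii) gives that $P$ being $\mtp$ forces every $P_C$ to be $\mtp$. For the converse, suppose every clique marginal is $\mtp$. I would argue by induction on $j$ that $P_{H_j}$ is $\mtp$. The base case $P_{H_1} = P_{C_1}$ holds by hypothesis. For the inductive step, $P_{H_j} = P_{H_{j-1}} \star P_{C_j}$ where $P_{H_{j-1}}$ is $\mtp$ by the inductive hypothesis and $P_{C_j}$ is $\mtp$ by hypothesis; the two are consistent on $X_{S_j}$ (as $P_{S_j}$ is the common marginal), and $|H_{j-1}\cap C_j| = |S_j| \le 1$. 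When $S_j = \varnothing$ the Markov combination is just the product of the two densities, which is $\mtp$ since a product of $\mtp$ functions on disjoint variable sets is $\mtp$ (this is the basic fact cited in the proof of Proposition~\ref{prop:alongsingleton}); when $|S_j|=1$, Proposition~\ref{prop:alongsingleton} applies directly to conclude $P_{H_j}$ is $\mtp$. After $k$ steps we obtain that $P = P_{H_k}$ is $\mtp$.

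The main obstacle — really the only point requiring care — is verifying that the decomposition of a Markov-over-$G$ distribution into iterated Markov combinations along singleton (or empty) separators is legitimate, i.e. that one can choose a perfect elimination ordering of the cliques whose separators respect the "intersection is empty or a singleton" hypothesis. This follows from the junction-tree (clique-tree) characterization of decomposable graphs: the cliques can be arranged in a tree so that every separator (edge of the tree) equals the intersection of the two adjacent cliques, and the separator sets are precisely intersections of pairs of cliques along the tree — hence empty or singletons under our hypothesis. Reading off a perfect sequence from this tree and invoking the factorization theorem for decomposable models (see e.g.~\cite{lau96}) completes the argument. One minor wrinkle worth a sentence: if $G$ is disconnected, the clique tree is a forest and the separators between different components are empty, which is exactly the $S_j=\varnothing$ case handled above, so no special treatment is needed.
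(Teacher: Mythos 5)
Your proposal is correct and follows exactly the route the paper intends: the paper's proof is the single sentence ``The proof follows by induction over the number of cliques,'' and your argument fills in precisely that induction, using a perfect clique sequence with separators of size at most one, Proposition~\ref{prop:alongsingleton} for the singleton-separator step, and the product rule for empty separators. The details you supply (running intersection forcing $|S_j|\le 1$, consistency of the marginals, and the factorization of a globally Markov distribution over a decomposable graph) are the right ones and are handled correctly.
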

\begin{proof}
The proof follows by induction over the number of cliques.
\end{proof}

As we show in the following example, Corollary~\ref{th:deconly} cannot be extended directly to non-decomposable graphs. It does not hold in general that a distribution is $\mtp$ if the margins over all cliques in the graph are $\mtp$ and the cliques intersect in singletons only. However, as we show in Theorem~\ref{prop:ifpsimtp} such a result does hold if all clique potentials are $\mtp$ functions.

\begin{ex}\rm
Consider the following 4-dimensional binary distribution $P=[p_{ijkl}]$ with
$$
p_{ijkl}\;=\;\frac{1}{Z}\,A_{ij}B_{jk}C_{kl}D_{il},
$$
where
$$
Z=243, \qquad A=\begin{bmatrix}
6 & 5\\
4 & 3
\end{bmatrix}\qquad \textrm{and} \qquad B=C=D=\begin{bmatrix}
2 & 1\\
1 & 2
\end{bmatrix}.
$$
This distribution is Markov w.r.t.~the 4-cycle. We now show that the marginal distributions over each edge are $\mtp$. For this, note that a binary 2-dimensional random vector is $\mtp$ if and only if its covariance is non-negative. To see this, observe that in the bivariate case there is only one inequality $p_{00}p_{11}-p_{01}p_{10}\geq 0$. Using the fact that $p_{00}+p_{01}+p_{10}+p_{11}=1$, it is seen that this inequality is equivalent to $p_{11}-p_{1+}p_{+1}\geq 0$; but ${\rm cov}(X_1,X_2)=p_{11}-p_{1+}p_{+1}$.

In this example,
$$
{\rm cov}(X_1,X_2)=\frac{148}{243^2},\quad {\rm cov}(X_2,X_3)=\frac{4812}{243^2},\,
$$
$$
{\rm cov}(X_3,X_4)=\frac{4842}{243^2},\quad {\rm cov}(X_1,X_4)=\frac{4632}{243^2},
$$
and hence all edge-marginals are $\mtp$. However, the full distribution $P$ is not $\mtp$, since
$$
p_{0011}p_{1111}-p_{0111}p_{1011}=-\frac{32}{243^2},
$$
which completes the proof by a similar argument as in Example~\ref{ex:simpleonlysingl}. \qed
\end{ex}

We now show how to overcome these limitations and build $\mtp$ distributions over non-decomposable graphs, namely by using $\mtp$ potentials over the edges instead of $\mtp$ marginal  distributions over the edges.

\begin{thm}\label{prop:ifpsimtp}
A distribution of the form
$$
p(x)=\frac{1}{Z}\prod_{uv\in E} \psi_{uv}(x_u,x_v),
$$
where $\psi_{uv}$ are positive functions and $Z$ is a normalizing constant, is $\mtp$ if and only if each $\psi_{uv }$ is an $\mtp$ function.
\end{thm}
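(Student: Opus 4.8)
The plan is to prove both directions separately. The ``only if'' direction is the substantive one and should proceed through a pointwise comparison of the $\mtp$ inequality for $p$ against the product of the pairwise $\tp$ inequalities for the factors $\psi_{uv}$. The ``if'' direction is almost immediate from results already established: if each $\psi_{uv}$ is $\mtp$ (as a function of two variables, hence trivially also $\mtp$ viewed as a function on $\cX$ depending only on the coordinates $x_u,x_v$), then the product $\prod_{uv\in E}\psi_{uv}$ is $\mtp$ because the product of $\mtp$ functions is $\mtp$ (Proposition~3.3 of~\cite{KarlinRinott80}, already cited in the proof of Proposition~\ref{prop:alongsingleton}); dividing by the constant $Z$ preserves $\mtp$. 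So the real work is the converse.

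For the ``only if'' direction, the key observation is that the $\mtp$ condition for $p$, being multiplicative in the factorization, decouples over edges. Suppose $p$ is $\mtp$. Fix an edge $uv\in E$ and fix values $x_w=y_w$ for all $w\neq u,v$. By Proposition~\ref{prop:ClosedMC}(i), the conditional distribution $p(x_u,x_v\cd x_{V\setminus\{u,v\}}=y_{V\setminus\{u,v\}})$ is $\tp$; but this conditional is proportional to $\psi_{uv}(x_u,x_v)\cdot\prod_{ab\in E,\,ab\neq uv}\psi_{ab}(x_a,x_b)$ evaluated at the fixed coordinates, and every factor other than $\psi_{uv}$ becomes either a constant (if $\{a,b\}\cap\{u,v\}=\varnothing$) or a function of $x_u$ alone or $x_v$ alone (if exactly one of $a,b$ lies in $\{u,v\}$). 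Thus the conditional density equals $\psi_{uv}(x_u,x_v)\,\alpha(x_u)\,\beta(x_v)\,c$ for positive $\alpha,\beta$ and constant $c$ depending on the fixed coordinates. Since multiplying a $\tp$ function of two variables by positive functions of the individual variables preserves the $\tp$/log-supermodularity property (this is again the transformation rule~(\ref{eq:prodtrans}), or one checks it directly: the cross-ratio inequality $g(x)g(x')\le g(x\wedge x')g(x\vee x')$ for $g=\psi_{uv}\alpha\beta c$ reduces to the same inequality for $\psi_{uv}$), we conclude that $\psi_{uv}$ is $\tp$. As $uv$ was arbitrary, every $\psi_{uv}$ is $\tp$, i.e.\ $\mtp$ as a function of two variables.

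The main obstacle, if any, is purely bookkeeping: one must be careful that when $u$ or $v$ has disconnected behaviour, the ``function of $x_u$ alone'' collects contributions from \emph{all} edges incident to $u$ other than $uv$, and likewise for $v$; but since each such edge contributes a factor depending on $x_u$ (or $x_v$) and fixed coordinates, the product is still a single positive function of $x_u$ (resp.\ $x_v$), so the reduction goes through. One should also note the trivial edge cases where some $\psi_{uv}$ is forced to be constant in one argument, which causes the $\tp$ inequality to hold with equality; this poses no difficulty since we have assumed all $\psi_{uv}>0$. Finally, one may remark that this theorem generalizes Corollary~\ref{th:deconly}: in the decomposable, singleton-intersection case the clique marginals \emph{are} the (normalized) clique potentials, so the two statements coincide there, whereas for non-decomposable graphs the potential formulation is genuinely stronger, as the preceding $4$-cycle example shows.
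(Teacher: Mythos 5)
Your proof is correct and its core --- the cancellation of every factor other than $\psi_{uv}$ once $x$ and $y$ are taken to agree off $\{u,v\}$ --- is exactly the paper's argument, which runs both directions at once through the pairwise-$\tp$ reduction of Proposition~\ref{KarlinRinott80} for strictly positive densities. Your only (harmless) deviation is packaging: you handle the ``if'' direction separately via closure of $\mtp$ under products and route the ``only if'' direction through conditional distributions rather than directly through the pairwise reduction, but both rest on the same cited facts from Karlin and Rinott.
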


\begin{proof}
Since the distribution $p$ is strictly positive, by Proposition~\ref{KarlinRinott80} $p$ satisfies $\mtp$ if and only if it does so for $x,y\in \cX$ that differ in two coordinates, say with indices $u,v$. Write $E_u$ for the set of edges that contain $u$ but not $v$ and  $E_v$ for the set of edges that contain $v$ but not $u$. First, consider the case where $uv\in E$. Then we have that $p(x\wedge y)p(x\vee y)-p(x)p(y)\geq 0$ if and only if
\begin{multline*}
\psi_{uv}((x\wedge y)_{uv})\,\psi_{uv}((x\vee y)_{uv})\prod_{st\in E_u\cup E_v}\psi_{st}((x\wedge y)_{st})\,\psi_{st}((x\vee y)_{st})\geq \\
\psi_{uv}(x_{uv})\,\psi_{uv}(y_{uv})\prod_{st\in E_u\cup E_v}\psi_{st}(x_{st})\,\psi_{st}(y_{st}).
\end{multline*}
All other terms cancel out because of the assumption that $x_w = y_w$ for all $w\in V\setminus\{u,v\}$. Now note that for $st\in E_u\cup E_v$ we have $\{x_{st},y_{st}\}=\{(x\wedge y)_{st},(x\vee y)_{st}\}$ and so the above inequality holds if and only if
$$
\psi_{uv}((x\wedge y)_{uv})\psi_{uv}((x\vee y)_{uv})\geq \psi_{uv}(x_{uv})\psi_{uv}(y_{uv}),
$$
that is, if and only if $\psi_{uv}$ is $\mtp$.

Next consider the case where $uv\notin E$. By the same argument one concludes that in this case the above inequalities are in fact equalities, which completes the proof.
\end{proof}

As a final remark note that Theorem~\ref{prop:ifpsimtp}
 can directly be extended to distributions of the form
$$
p(x)=\frac{1}{Z}\prod_{C\in \mathcal{C}}\psi_C(x_C),
$$
where $\mathcal C$ is a family of subsets of $V$ such that for any two $C,C'\in \mathcal C$ we have $|C\cap C'|\in\{0,1\}$.

\subsection{Log-linear interactions}\label{sec:loglinear}

We next give a short discussion of interaction representations for discrete $\mtp$ distributions, as they typically are used in log-linear models for contingency tables. Suppose that $X=(X_{v})_{v\in V}$ is a random vector with values in $\cX=\prod_{v\in V}\cX_{v}$ where each $\cX_v$ is finite.
Let $\mathcal{D}$ denote the set of subsets of $V$. Any function $h:\cX\to \R^n$ of $\cX$ can be expanded as
\begin{equation}\label{eq:loglinear}
h(x)\;\;=\;\; \sum_{D\in \mathcal D}\theta_{D}(x),
\end{equation}
where $\theta_D$ are functions on $\mathcal X$ that only depend on $x$ through $x_D$, i.e.\ $\theta_D$ satisfy that $\theta_{D}(x)=\theta_{D}(x_{D})$.
In the case where $h(x)=\log p(x)$ where $p$ is a positive probability distributions over $\cX$, the functions
 $\theta_{D}(x)$ are known as \emph{interactions} among variables in $D$ and we shall also use this expression for a general function $h$.

Without loss of generality we may assume that $\min \cX_v=0$ for all $v\in V$ and to assure that the representation is unique, we may require that $\theta_{D}(x)=0$ whenever $x_{d}=0$ for some $d\in D$. With this convention, the sum in (\ref{eq:loglinear}) can be rewritten so it only extends over such  $D\in \mathcal D$ which are contained in the \emph{support} $S(x)$ of $x$ where $d\in S(x) \iff x_d\neq 0$. In the binary case, when $d_{v}=1$, this allows us to use a simpler notation, namely  $\theta_{D}(\mathbf 1_{D}):=\theta_{D}$ for all $D\in \mathcal D$.

For any such interaction expansion and a fixed pair $u,w\in V$, we define a function $\gamma_{uw}$ on $ \cX$ by
$$
\gamma_{uw}(x)=
\sum_{D:\{u,w\}\subseteq D\subseteq S(x)} \theta_{D}(x).
$$
Observe that then $\gamma_{uw}(x)=0$ unless $u,w\in S(x)$ and thus in particular whenever $|S(x)|\leq 1$; further,  $\gamma_{uw}$ is a linear combination of interaction terms.

Let $\cZ\subseteq \cX$ be a subset of $\cX$ which is closed under $\wedge$ and $\vee$. A function $g:\cZ\to \R$ is \emph{supermodular} if
$$
g(x\wedge y)+g(x\vee y)\;\;\geq \;\;g(x)+g(y)\quad\mbox{for all }x,y\in \cZ.
$$ Thus $g$ is supermodular on $\cX$ if and only if $\exp(g)$ is $\mtp$ on $\cX$.  A function $g$ is \emph{modular} if  both of $g$ and $-g$ are supermodular.

Denote by $\cX^{A}$ the set of all $x\in \cX$ with $S(x)=A$. Clearly, $\cX^A$ is closed under $\wedge$ and $\vee$.
Then we obtain the following result.

\begin{thm}\label{thm:fuwmtp}Let $P$ be a strictly positive distribution of $X$. Then $P$ is $\mtp$ if and only if  for all $A\subseteq V$ with $|A|\geq 2$ and any given $u,w\in V$ the function $\gamma_{uw}$ is  non-negative,  non-decreasing, and supermodular over  $\cX^{A}$.  \end{thm}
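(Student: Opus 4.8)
The plan is to reduce the $\mtp$ condition, via Proposition~\ref{KarlinRinott80}, to the pairwise total positivity condition on $\log p$, and then to decompose that condition over the supports $S(x)$. Since $P$ is strictly positive, $p$ is $\mtp$ if and only if, for every pair $x,y\in\cX$ differing only in two coordinates $u,w$, one has $\log p(x\wedge y)+\log p(x\vee y)\ge \log p(x)+\log p(y)$. Fixing all coordinates outside $\{u,w\}$ at some value $z$, write $A=S(z)\cup\{u,w\}$ for the relevant support set (more precisely, for each choice of the "background" configuration, the four points $x,y,x\wedge y,x\vee y$ all lie in $\cX^{B}$ for the appropriate $B\subseteq A$). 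Expanding $\log p = \sum_D \theta_D$ and using the normalization $\theta_D(x)=0$ whenever some coordinate in $D$ vanishes, all terms $\theta_D$ with $D\not\supseteq\{u,w\}$ cancel in the four-term combination (they depend on at most one of $x_u,x_w$, hence are modular in $(x_u,x_w)$ and contribute zero), leaving exactly the increments of $\gamma_{uw}$.

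The key computation is therefore: for fixed background $z$, the quantity $\log p(x\wedge y)+\log p(x\vee y)-\log p(x)-\log p(y)$ equals $\gamma_{uw}(\cdot\wedge\cdot)+\gamma_{uw}(\cdot\vee\cdot)-\gamma_{uw}(x)-\gamma_{uw}(y)$, where each $\gamma_{uw}$ is evaluated at the point whose $\{u,w\}$-coordinates are as indicated and whose remaining coordinates are those of $z$. Now one works on a single two-dimensional slice $\cX_u\times\cX_w$ with the background frozen. There are two cases. If both $x_u>z_u\ge 0$-style entries are nonzero — i.e. we are comparing two configurations both having $u,w\in S$ — supermodularity of $\gamma_{uw}$ over $\cX^{A}$ is exactly the required inequality. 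The boundary cases, where one or more of the four points has $x_u=0$ or $x_w=0$ (so $S$ shrinks), are where non-negativity and monotonicity of $\gamma_{uw}$ come in: on such points $\gamma_{uw}$ vanishes, so the inequality reduces to statements like $\gamma_{uw}(x\vee y)\ge 0$ (when $x,y,x\wedge y$ all have a zero among the $u,w$-coordinates but $x\vee y$ does not) or a monotonicity comparison $\gamma_{uw}(x\vee y)\ge\gamma_{uw}(x)$ (when $x\wedge y$ hits the boundary but the other three do not). Conversely, to get necessity, one specializes: taking $x,y$ to range over $\cX^{A}$ with matching background recovers supermodularity on each $\cX^A$; taking comparisons that cross the boundary of a support set recovers non-negativity and monotonicity.

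I would organize the proof as: (1) invoke Proposition~\ref{KarlinRinott80} to pass to the two-coordinate criterion on $\log p$; (2) perform the cancellation of all $\theta_D$ with $\{u,w\}\not\subseteq D$, identifying the four-term expression with an increment of $\gamma_{uw}$ on a frozen-background slice; (3) enumerate the finitely many "support patterns" of the quadruple $(x,y,x\wedge y,x\vee y)$ according to which of $x_u,x_w,y_u,y_w$ are zero, and check in each pattern that the inequality is implied by (and, for necessity, implies) one of {supermodularity on $\cX^A$, non-negativity, non-decreasingness}; (4) assemble both directions. The main obstacle is step (3): keeping the bookkeeping of support sets clean, since $\gamma_{uw}$ is defined via the support $S(x)$ and so its value jumps as coordinates cross zero — one must be careful that "supermodular over $\cX^A$" only constrains points with exactly support $A$, while the four points in an $\mtp$ comparison can have different supports, and it is precisely the interplay of monotonicity (moving to a larger support) and non-negativity (comparing with the all-relevant-coordinates-zero baseline) that bridges these. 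A secondary subtlety is verifying that a non-decreasing, non-negative, supermodular family $\{\gamma_{uw}\}$ on the various $\cX^A$ can always be realized by a single coherent interaction expansion, but this is immediate since the $\gamma_{uw}$ are by definition built from the $\theta_D$ of the given $p$.
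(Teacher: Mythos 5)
Your proposal follows the paper's proof essentially verbatim: both reduce via Proposition~\ref{KarlinRinott80} to the two-coordinate log-supermodularity criterion for pairs $x,y$ differing only in $u,w$, cancel all interaction terms $\theta_D$ with $\{u,w\}\not\subseteq D$ so that the four-term increment becomes an increment of $\gamma_{uw}$, and then split into the same three support patterns that you enumerate in step (3) — both of the ``lower'' $u,w$-coordinates zero (giving non-negativity), exactly one zero (giving monotonicity), neither zero (giving supermodularity on $\cX^A$). The differences are purely presentational (the paper normalizes $x_u<y_u$, $y_w<x_w$ up front to make the bookkeeping of the quadruple's supports explicit), so the content is the same.
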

\begin{proof}By Proposition~\ref{KarlinRinott80} $p$ is $\mtp$ if and only if
\begin{equation}\label{eq:logsuper}
\log p(x\wedge y)+\log p(x\vee y)-\log p(x)-\log p(y)\geq 0
\end{equation}
for all $x,y\in \mathcal X$ that differ only in two entries. Let $u,w\in V$ and take $x,y\in \mathcal X$ satisfying $x_{v}=y_{v}$ for all  $v\in V\setminus\{u,w\}$. Without loss of generality we can assume $x_{u}<y_{u}$ and $y_{w}<x_{w}$ for otherwise the inequality is trivially satisfied. Using the expansion (\ref{eq:loglinear}), the inequality (\ref{eq:logsuper}) becomes 
\begin{equation}\label{eq:sumthetas}
\sum_{D\in \mathcal D}\big(\theta_{D}((x\wedge y)_{D})+\theta_{D}((x\vee y)_{D})-\theta_{D}(x_{D})-\theta_{D}(y_{D})\big)\;\;\geq\;\;0.
\end{equation}

For every $D\subseteq V\setminus \{w\}$ we have $(x\wedge y)_{D}=x_{D}$ and $(x\vee y)_{D}=y_{D}$. Similarly,  for every $D\subseteq V\setminus \{u\}$ we have $(x\wedge y)_{D}=y_{D}$ and $(x\vee y)_{D}=x_{D}$,  and thus, in both cases the corresponding summands in (\ref{eq:sumthetas}) are zero. It follows that  (\ref{eq:sumthetas}) is equivalent to
\begin{equation}\label{eq:sumthetas2}
\sum_{D:\{u,w\}\subseteq D\subseteq A}\big(\theta_{D}((x\wedge y)_{D})+\theta_{D}((x\vee y)_{D})-\theta_{D}(x_{D})-\theta_{D}(y_{D})\big)\;\;\geq\;\;0,
\end{equation}
where $A\subseteq V$ is the support of $x\vee y$ (if $D$ is not contained in $A$ all terms $\theta_{D}$ are zero by our convention).

We now show that the fact that (\ref{eq:sumthetas2}) must hold for all $u,w\in V$ and $x,y\in \mathcal X$ as above is equivalent to the fact that all $\gamma_{uw}$ satisfy the conditions of the theorem.

Consider three possible cases:
\begin{enumerate}[(a)]\item $S(x)=A\setminus \{u\}$, $S(y)=A\setminus \{w\}$, \item either $S(x)=A\setminus \{u\}$, $S(y)=A$ or  $S(x)=A$, $S(y)=A\setminus \{w\}$; \item  $S(x)=S(y)=A$. \end{enumerate} In other words: in case (a) we have $x_{u}=y_{w}=0$; in case (b) either $x_{u}=0$, $y_{w}>0$ or $x_{u}>0$, $y_{w}=0$; and in case (c) we have $x_{u},y_{w}>0$.

 In case (a) we have $\theta_{D}((x\wedge y)_{D})=\theta_{D}(x_{D})=\theta_{D}(y_{D})=0$ for every $D$ containing $\{u,w\}$ so (\ref{eq:sumthetas2}) becomes
$$\sum_{D:\{u,w\}\subseteq D\subseteq A}\theta_{D}((x\vee y)_{D})\geq 0.$$ By choosing different pairs $x,y$, this can be equivalently rewritten as
$$
\sum_{D:\{u,w\}\subseteq D\subseteq S(x)}\theta_{D}(x_{D})\geq 0\quad\mbox{for all }x\in\mathcal X
$$
and the sum on the left is precisely $\gamma_{uw}(x)$.

In case (b), if $S(x)=A\setminus \{u\}$, $S(y)=A$, (\ref{eq:sumthetas2}) becomes
$$\sum_{D:\{u,w\}\subseteq D\subseteq A}(\theta_{D}((x\vee y)_{D})-\theta_{D}(y_{D}))\geq 0,$$ where $A=S(x\vee y)=S(y)$, which is equivalent to $\gamma_{uw}$ being nondecreasing on $\cX^{A}$.

Finally, in case (c), all $x\vee y$, $x\wedge y$, $x$, $y$ have the same support. Thus $\gamma_{uw}$ must be supermodular over each $\cX^{A}$.
\end{proof}

As a special case we recover the characterization of binary $\mtp$ distributions in \cite{forcina2000}.

\begin{cor}\label{cor:forcina}
Let $P$ be a binary distribution with $$\log p(x) = \sum_{D: D\subseteq S(x)}\theta_{D}$$ using the convention that $\theta_{D} = \theta_{D}(\mathbf 1_{D})$. Then $P$ is $\mtp$ if and only if for all $A$ with $|A|\geq 2$ and all $\{u,w\}\subseteq V$ we have
$$
\sum_{D:\{u,w\}\subseteq D\subseteq A}\theta_{D}\;\;\geq \;\;0.
$$
\end{cor}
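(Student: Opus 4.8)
The plan is to derive Corollary~\ref{cor:forcina} as a direct specialization of Theorem~\ref{thm:fuwmtp} to the binary case, where the apparatus of non-negativity, non-decreasingness, and supermodularity of $\gamma_{uw}$ collapses into a single family of inequalities on the scalar interaction coefficients $\theta_D$. First I would recall that in the binary setting $\cX_v=\{0,1\}$ with $\min\cX_v=0$, so the support $S(x)$ determines $x$ completely: for each $A\subseteq V$ the set $\cX^A$ consists of the single point $\mathbf 1_A$. Under the convention $\theta_D=\theta_D(\mathbf 1_D)$, the function $\gamma_{uw}$ evaluated at the point with support $A$ is exactly $\gamma_{uw}(\mathbf 1_A)=\sum_{D:\{u,w\}\subseteq D\subseteq A}\theta_D$ when $\{u,w\}\subseteq A$, and $0$ otherwise.

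Next I would go through the three conditions in Theorem~\ref{thm:fuwmtp} one at a time and show that in the binary case they all reduce to the stated inequality. Non-negativity of $\gamma_{uw}$ over $\cX^A$ is literally the requested inequality $\sum_{D:\{u,w\}\subseteq D\subseteq A}\theta_D\geq 0$ for each $A$ with $\{u,w\}\subseteq A$ (and $|A|\geq 2$ is automatic once $A$ contains the two distinct points $u,w$). Supermodularity over $\cX^A$ is vacuous: since $\cX^A$ is a single point, the supermodularity inequality reads $g(\mathbf 1_A)+g(\mathbf 1_A)\geq g(\mathbf 1_A)+g(\mathbf 1_A)$, which holds trivially. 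Monotonicity is the only point requiring a word of care: it concerns comparing $\gamma_{uw}$ on $\cX^{A}$ with $\gamma_{uw}$ on $\cX^{A\setminus\{u\}}$ (or $\cX^{A\setminus\{w\}}$), but by the observation that $\gamma_{uw}$ vanishes identically unless both $u$ and $w$ lie in the support, the value on $\cX^{A\setminus\{u\}}$ is $0$, so the monotonicity requirement $\gamma_{uw}(\mathbf 1_A)\geq \gamma_{uw}(\mathbf 1_{A\setminus\{u\}})=0$ is again just the non-negativity inequality and contributes nothing new. Hence the full content of Theorem~\ref{thm:fuwmtp} in the binary case is precisely the family of inequalities $\sum_{D:\{u,w\}\subseteq D\subseteq A}\theta_D\geq 0$ ranging over all $\{u,w\}$ and all $A\supseteq\{u,w\}$.

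The only mild subtlety — and what I would flag as the main thing to get right rather than a genuine obstacle — is bookkeeping about which ranges of $A$ and $\{u,w\}$ actually need to be quantified, and checking that the degenerate cases (singleton or empty support, $A$ not containing $\{u,w\}$) are handled by the convention $\theta_D(x)=0$ when some coordinate of $x_D$ is zero, so that no spurious constraints appear and none are missed. Once that is pinned down, the corollary follows immediately by invoking Theorem~\ref{thm:fuwmtp}, and I would write the proof as: ``In the binary case $\cX^A=\{\mathbf 1_A\}$, so supermodularity over $\cX^A$ is automatic, and non-decreasingness reduces to non-negativity since $\gamma_{uw}$ vanishes on $\cX^{A\setminus\{u\}}$ and $\cX^{A\setminus\{w\}}$; thus Theorem~\ref{thm:fuwmtp} reduces to the stated condition.''
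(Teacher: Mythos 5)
Your proposal is correct and follows the same route as the paper: specialize Theorem~\ref{thm:fuwmtp} to the binary case, where each $\cX^{A}$ is the singleton $\{\mathbf 1_A\}$, so supermodularity and monotonicity over $\cX^{A}$ impose nothing and only the non-negativity of $\gamma_{uw}(\mathbf 1_A)=\sum_{D:\{u,w\}\subseteq D\subseteq A}\theta_D$ survives. The paper's own proof is exactly this one-line observation; your extra bookkeeping about the degenerate cases is harmless and consistent with it.
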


\begin{proof}
In the binary case each $\cX^{A}$ has only one element and so the only constraint from Theorem~\ref{thm:fuwmtp} is the non-negativity constraint.
\end{proof}
\begin{ex}\rm Let $X=(X_1=1_A,X_2=1_B,X_3=1_C)$ be the vector of binary indicator functions of events $A$, $B$, $C$. Reichenbach~\cite[p.\ 190]{reichenbach:56} (using a different notation) says that \emph{an event $B$ is causally between $A$ and $C$}  if $P(C\cd B\wedge A)=P(C\cd B)$ and  further
\[1>P(C\cd B)>P(C\cd A)>P(C)>0,\]\[1>P(A\cd B)>P(A\cd C)>P(A)>0.\] Equivalently, as defined
in \citep{chvatal:wu:12}, $B$ is causally between $A$ and $C$ if the following hold:
\begin{eqnarray}P(A\wedge C) &> &P(A)P(C)\label{eq:poscorr}\\ P(A\cd B)&>& P(A\cd C)\label{eq:alargest}\\P(C\cd B) &>& P(C\cd A)\label{eq:clargest}\\
P(A\wedge C\cd B)& =&P(A\cd B)P(C\cd B),\label{eq:indep}\\
P(\neg A\wedge B)>0,&&P(\neg C\wedge B)>0.\label{eq:positivity}\end{eqnarray}
In general, causal betweenness does not imply $\mtp$;  if  we let $p_{101}=0$, $p_{000}=4/10$, and $p_{ijk}=1/10$ for the remaining six possibilities, $B$ is causally between $A$ and $C$, but $X$ is not $\mtp$ since $0=p_{101}p_{000}<p_{100}p_{001}$.

However,   \emph{if $P(X=x)>0$ for all $x$ and $B$ is causally between $A$ and $C$, then $P$ is $\mtp$.}
To see this, we expand $P$ in log-linear interaction parameters to get
\begin{equation}\label{eq:indeplogl}1_A\cip 1_C\cd (1_B=1)\iff \theta_{AC}+\theta_{ABC}=0.\end{equation}
Further, a simple but somewhat tedious calculation using (\ref{eq:indeplogl}) yields that (\ref{eq:alargest}) and (\ref{eq:clargest}) hold if and only if
\[\theta_{AB}> \theta_{AC},\quad \theta_{BC}> \theta_{AC},\] which in combination with (\ref{eq:indeplogl}) gives that (\ref{eq:alargest})--(\ref{eq:indep}) hold if and only if
\begin{equation}\label{eq:loglinbetween}\theta_{AB}+\theta_{ABC}> 0,\quad \theta_{BC}+\theta_{ABC}> 0,\quad \theta_{AC}+\theta_{ABC}= 0.\end{equation}
Thus, Corollary~\ref{cor:forcina} ensures that $P$ is $\mtp$.

Conversely, if $P(X=x)>0$ for all $x$ and $P$ is $\mtp$, the condition $1_A\cip 1_C\cd (1_B=1)$ implies ``weak causal betweenness", i.e.\ $P$ satisfies the  inequalities (\ref{eq:poscorr})--(\ref{eq:clargest}) with $>$ replaced by $\geq$; this is true because the weak form of (\ref{eq:alargest}) and (\ref{eq:clargest}) follows from the weak form of (\ref{eq:loglinbetween}), and (\ref{eq:poscorr}) in its weak form expresses that $\cov(X_1,X_3)\geq 0$, which is also a consequence of $\mtp$.

Finally, if $P(X=x)>0$ for all $x$, $P$ is $\mtp$, and the independence graph of $P$ is $1\mbox{---}2\mbox{---}3$, then $B$ is causally between $A$ and $C$, as then $P$  is faithful by Theorem~\ref{th:Faith}, which ensures that inequalities are strict.
\qed
\end{ex}

\subsection{Conditional Gaussian distributions}\label{sec:cgdist}
In this section, we study  \emph{CG-distributions} satisfying the $\mtp$ property. The density of a CG-distribution is given by specifying a strictly positive distribution $p(i)$ over the discrete variables for $i\in \cX_\Delta$. Then the joint density $f(x)=f(i,y)$ is determined by specifying $f(y\cd i)$ to be the density of a Gaussian distribution $\mathcal N_{\Gamma}(\xi(i),\Sigma(i))$, where $\xi(i)\in \R^{\Gamma}$ is the mean vector and $\Sigma(i)$ is the covariance  matrix.  CG-distributions can also be represented by the set of canonical characteristics $(g,h,K)$ where
$$
\log f(x)= \log f(y,i)=g(i)+h(i)^{T}y-\frac{1}{2}y^{T} K(i)y;
$$
see~\cite{lau96}. Here $K(i)=\Sigma^{-1}(i)$ is the conditional concentration matrix.
We shall say that a function $u(i)$ is \emph{additive} if it has the form \[u(i)=\sum_{\delta\in\Delta}\alpha_\delta(i_\delta).\]
Before we characterize CG-distribution with the $\mtp$ property, we need a small lemma.
\begin{lem}\label{lem:modular}A function $u: \cX_\Delta\to \R$ is additive if and only if it is modular.
\end{lem}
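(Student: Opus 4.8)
The plan is to prove both directions of the equivalence separately, relying on the characterization of supermodularity via second differences.

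\textbf{Plan.} First I would recall that modularity of $u:\cX_\Delta\to\R$ means $u(i\wedge j)+u(i\vee j)=u(i)+u(j)$ for all $i,j\in\cX_\Delta$. The forward direction ($\Rightarrow$) is immediate: if $u(i)=\sum_{\delta\in\Delta}\alpha_\delta(i_\delta)$, then for each coordinate $\delta$ we have $\alpha_\delta((i\wedge j)_\delta)+\alpha_\delta((i\vee j)_\delta)=\alpha_\delta(\min(i_\delta,j_\delta))+\alpha_\delta(\max(i_\delta,j_\delta))=\alpha_\delta(i_\delta)+\alpha_\delta(j_\delta)$, and summing over $\delta$ gives modularity. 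So the content is in the reverse direction.

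For the reverse direction ($\Leftarrow$), assume $u$ is modular. The key observation is that modularity is equivalent to vanishing of all mixed second differences: for any two distinct coordinates $\delta\neq\varepsilon$, any values $a<a'$ in $\cX_\delta$, $b<b'$ in $\cX_\varepsilon$, and any fixed values $i_{\Delta\setminus\{\delta,\varepsilon\}}$ of the remaining coordinates, applying the modularity identity to the pair $i,j$ that differ exactly in coordinates $\delta,\varepsilon$ (with $i_\delta=a', i_\varepsilon=b$, $j_\delta=a, j_\varepsilon=b'$) yields
\[
u(\ldots,a,\ldots,b,\ldots)+u(\ldots,a',\ldots,b',\ldots) = u(\ldots,a',\ldots,b,\ldots)+u(\ldots,a,\ldots,b',\ldots),
\]
i.e.\ the discrete mixed partial difference in the $(\delta,\varepsilon)$ directions is zero. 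Since this holds for all pairs of coordinates and all choices of the other coordinates, a standard inclusion–exclusion / M\"obius argument on the Boolean lattice of subsets of $\Delta$ shows that in the interaction expansion $u(i)=\sum_{D\subseteq\Delta}\theta_D(i_D)$ (of the type in~(\ref{eq:loglinear}), normalized so $\theta_D$ vanishes when some coordinate in $D$ is at its minimum), every term $\theta_D$ with $|D|\geq 2$ is identically zero, because such a term is a signed sum of values of $u$ that telescopes precisely into an iterated mixed difference. What remains is $u(i)=\theta_\varnothing+\sum_{\delta\in\Delta}\theta_{\{\delta\}}(i_\delta)$, which is additive after absorbing the constant $\theta_\varnothing$ into any one of the $\alpha_\delta$.

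\textbf{Main obstacle.} The only delicate point is making the inclusion–exclusion argument clean: one should argue by induction on $|D|$, showing that if all $\theta_{D'}$ with $2\le|D'|<|D|$ vanish, then the two-coordinate modularity identity, applied along coordinates $\delta,\varepsilon\in D$ at the top levels, forces $\theta_D\equiv 0$; alternatively one can express $\theta_D$ directly via the M\"obius formula $\theta_D(i_D)=\sum_{D'\subseteq D}(-1)^{|D\setminus D'|}u(i_{D'},0_{\Delta\setminus D'})$ and recognize this as an iterated difference operator $\Delta_\delta\Delta_\varepsilon(\cdots)$, which annihilates $u$ by modularity whenever $|D|\ge 2$. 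I expect this bookkeeping to be the bulk of the proof, but it is routine; no genuinely hard step is involved.
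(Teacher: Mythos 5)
Your proposal is correct and follows essentially the same route as the paper: the reverse direction is proved by expanding $u$ in interaction terms, expressing $\theta_D$ via M\"obius inversion as $\sum_{D'\subseteq D}(-1)^{|D\setminus D'|}u(i_{D'},0_{\Delta\setminus D'})$, and pairing the summands into mixed second differences along two coordinates of $D$, which vanish by modularity whenever $|D|\ge 2$. The only cosmetic difference is that the paper writes this grouping explicitly as quadruples $w_{A\cup\{u,v\}}-w_{A\cup\{u\}}-w_{A\cup\{v\}}+w_A$ rather than as an iterated difference operator, and it does not need (nor claim) the converse equivalence between modularity and vanishing pairwise mixed differences that you mention in passing.
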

\begin{proof} If $u$ is additive, then it is clearly modular. To show the converse, we make a log-linear expansion of $u$ as in (\ref{eq:loglinear})
\[u(i)=\sum_{D\in \mathcal{D}}\eta_D(i).\] We shall show that if $u$ is modular, then $\eta_D(i)=0$ whenever $|D|\geq 2$.
If for $C\subseteq V$ we let
\[w_C(i)=u(i_C,0_{V\setminus C}),\] it follows from the M\"obius inversion lemma, also known as inclusion--exclusion ( e.g.\ p.\ 239 of \cite{lau96}) that
\[\eta_D(i)=\sum_{A:A\subseteq D}(-1)^{|D\setminus A|} w_A(i).\]
If $|D|\geq 2$ we can for distinct $u,v\in D$ rewrite this as
\begin{eqnarray*}
\lefteqn{ \eta_D(i) = }\\& &\sum_{A: A\subseteq D\setminus\{u,v\}}(-1)^{|D\setminus A|}
\left\{w_{A\cup \{u,v\}}(i)-w_{A\cup \{u\}}(i)-w_{A\cup \{v\}}(i)+w_{A}(i)\right\}.
\end{eqnarray*}
If $u$ is modular, all terms inside the curly brackets are zero and hence $u$ is additive. \end{proof}

\begin{prop}\label{prop:cgcan}A CG-distribution $P$ with canonical characteristics $(g,h,K)$ is $\mtp$ if and only if
\begin{itemize}
\item [(i)] $g(i)$ is supermodular;
\item [(ii)] $h(i)$ is additive and non-decreasing;
\item [(iii)] $K(i)=K$ for all $i$ where $K$ is an $M$-matrix.
\end{itemize}
\end{prop}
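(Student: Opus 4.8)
The plan is to show that the $\mtp$ property of the density $f(x)=f(i,y)$ is equivalent, via Proposition~\ref{KarlinRinott80}, to the pairwise $\tp$ condition in each pair of coordinates with the remaining ones fixed, and then to translate this into conditions on $(g,h,K)$ by examining the three types of coordinate pairs separately: two discrete coordinates, one discrete and one continuous coordinate, and two continuous coordinates. Since the state space of a CG-distribution is a product of finite sets and open intervals and the density is strictly positive, Proposition~\ref{KarlinRinott80} applies and reduces the problem to pairwise checks. I would then write the log-density as $\log f(i,y)=g(i)+h(i)^{T}y-\tfrac12 y^{T}K(i)y$ and, as in the proof of Theorem~\ref{thm:fuwmtp}, rewrite the $\mtp$ inequality for $x,x'$ differing in two coordinates as a supermodularity condition on $\log f$ restricted to the corresponding two-dimensional sublattice.

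The key steps in order would be: (1) \emph{Two continuous coordinates $u,v\in\Gamma$ with $i$ and $y_{\Gamma\setminus\{u,v\}}$ fixed.} Here $\log f$ is, up to an affine function of $(y_u,y_v)$, equal to $-\tfrac12 y^{T}K(i)y$; a twice-differentiable function of two real variables is supermodular on a rectangle iff its mixed partial is non-negative, so the condition becomes $-k_{uv}(i)\ge 0$, i.e.\ $k_{uv}(i)\le 0$, for every $i$. Combined with positive-definiteness of $K(i)$ (which forces $k_{vv}(i)>0$), this says $K(i)$ is an M-matrix for each $i$. (2) \emph{One discrete coordinate $\delta\in\Delta$ and one continuous coordinate $v\in\Gamma$, with $i_{\Delta\setminus\{\delta\}}$ and $y_{\Gamma\setminus\{v\}}$ fixed.} For $i_\delta<i'_\delta$ the $\mtp$ inequality reads $\log f(i',y')+\log f(i,y)\le \log f(i',y)+\log f(i,y')$ where $y,y'$ differ only in coordinate $v$; this is precisely the statement that $\psi(y_v):=\log f(i',y_v,\cdot)-\log f(i,y_v,\cdot)$ is non-decreasing in $y_v$. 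Writing this difference out, the quadratic terms contribute $-\tfrac12 y_v^{2}(k_{vv}(i')-k_{vv}(i))$ and the cross terms $-y_v\sum_{w\ne v}y_w(k_{vw}(i')-k_{vw}(i))$; for $\psi$ to be non-decreasing on the whole real line for all fixed values of the other $y_w$'s, the coefficient of $y_v^2$ must vanish and that of $y_v$ must be a non-decreasing constant, forcing $K(i')=K(i)$ (so $K(i)\equiv K$ independent of $i$, sharpening step (1)) and $h_v(i')\ge h_v(i)$, i.e.\ $h$ non-decreasing in each discrete argument. (3) \emph{Two discrete coordinates $\delta,\epsilon\in\Delta$, with $i_{\Delta\setminus\{\delta,\epsilon\}}$ and all of $y$ fixed.} With $K$ now known to be constant, the quadratic term $-\tfrac12 y^{T}Ky$ cancels from the $\mtp$ inequality and we are left with $g(i\wedge i')+g(i\vee i') + (h(i\wedge i')+h(i\vee i')-h(i)-h(i'))^{T}y \ge g(i)+g(i')$ for all $y\in\R^{\Gamma}$, which holds for every $y$ iff the linear-in-$y$ part is identically zero, i.e.\ $h$ is modular, and the constant part gives $g$ supermodular. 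By Lemma~\ref{lem:modular}, $h$ modular plus $h$ non-decreasing is the same as $h$ additive and non-decreasing, yielding (ii), and supermodularity of $g$ is (i).

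For the converse direction, I would simply observe that conditions (i)--(iii) make each of the pairwise $\tp$ inequalities from cases (1)--(3) hold (supermodularity of $g$ handles discrete-discrete pairs once $K$ is constant and $h$ additive; additivity and monotonicity of $h$ together with $K$ constant handle discrete-continuous pairs; the M-matrix property handles continuous-continuous pairs), and then invoke Proposition~\ref{KarlinRinott80} again to conclude $\mtp$.

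The main obstacle I anticipate is being careful in step (2): one must argue that $\psi(y_v)$ being non-decreasing \emph{for all choices of the fixed coordinates $y_w$, $w\in\Gamma\setminus\{v\}$} is what forces $K(i')=K(i)$, rather than merely forcing some weaker inequality between entries of $K(i)$ and $K(i')$. Since the coefficient of $y_v$ in the derivative of $\psi$ is an affine function of the remaining $y_w$'s that ranges over all of $\R$ unless $k_{vw}(i')=k_{vw}(i)$ for all $w\ne v$, and the coefficient of $y_v^2$ must be zero to avoid $\psi'$ changing sign, the conclusion $K(i')=K(i)$ does follow; but this argument needs to be stated cleanly. A secondary bookkeeping point is handling the normalizing constant $g(i)$ correctly — it already incorporates $\log p(i)$ and the Gaussian normalization $\tfrac12\log\det K(i) - \tfrac12\xi(i)^{T}K(i)\xi(i)$ plus a constant — but since we work directly with the canonical characteristics $(g,h,K)$ and do not need to re-expand $g$, this causes no difficulty.
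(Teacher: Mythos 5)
Your proposal is correct and follows essentially the same route as the paper's proof: reduce to pairwise $\tp$ checks via Proposition~\ref{KarlinRinott80}, treat the three types of coordinate pairs (continuous--continuous giving the M-matrix condition, discrete--continuous forcing $K(i)$ constant and $h$ non-decreasing, discrete--discrete forcing $h$ modular and $g$ supermodular), and finish with Lemma~\ref{lem:modular}. The only cosmetic differences are that you derive the continuous--continuous case from the mixed-partial criterion for supermodularity where the paper cites the Gaussian M-matrix characterization, and you argue the discrete--continuous case by matching polynomial coefficients in $y_v$ where the paper takes a limit to obtain a gradient inequality; both yield the same conclusions.
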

\begin{proof}
By Proposition~\ref{KarlinRinott80}, a CG distribution is $\mtp$ if and only if it satisfies
$$
f(y\wedge z,i\wedge j)f(y\vee z,i\vee j)\geq f(y,i) f(z,j)
$$
for cases where $(y,i)$ and $(z,j)$ differ on two coordinates. Suppose first that $i=j$ and $y,z$ differ on two coordinates. Then we equivalently need to check whether
$$
f(y\wedge z\cd i)f(y\vee z\cd i)\geq f(y\cd i) f(z\cd i).
$$
Since $f(y\cd i)$ is the density of a Gaussian distribution, this inequality holds for every $y,z\in \R^{\Gamma}$ and $i$ if and only if each $K(i)$ is an $M$-matrix.
\medskip

If $i,j$ and $y,z$ both differ on one coordinate then without loss of generality we can assume $i<j$ and $y>z$ so that $i=i\wedge j$ and $z=y\wedge z$. In this case we need to show that
\begin{equation}\label{eq:aux111}
\log f(z,i)+\log f(y, j)\geq \log f(y,i)+\log f(z,j).
\end{equation}
Write $y=z+t e_{k}$ for some $t>0$, where $e_{k}$ is a unit vector in $\R^{\Gamma}$. Then equivalently
$$\frac{1}{t}(\log f(z+te_{k}, j)-\log f(z,j))\geq \frac{1}{t}(\log f(z+te_{k},i)-\log f(z,i)).
$$
Since this holds for every $t>0$, we can take the limit $t\to 0$, which implies that necessarily
$$
\nabla_{z} \log f(z,j) \geq \nabla_{z} \log f(z,i) \quad\mbox{for all }z\in \R^{\Gamma}, i<j\in \Delta.
$$
Since $\nabla_{y} \log f(y,i)= h(i)-K(i)y$, this
is equivalent to
$$
h(j)-h(i)-(K(j)-K(i))z\geq 0.
$$
The function on the left-hand side is linear in $z$ and thus this holds for every $z$ if and only if $K(j)=K(i)$ for every $i,j$ and $h(i)$ is non-decreasing in $i$.

If $y=z$ and $i,j$ differ on two coordinates, using all the conditions that have been already proven to be necessary we need to check that
\begin{equation}\label{eq:modularityrelation}
(g(i\wedge j)+g(i\vee j)-g(i)-g(j))+(h(i\wedge j)+h(i\vee j)-h(i)-h(j))^{T}z\geq 0.
\end{equation}
This can hold for every $z$ only if $h$ is modular, i.e.\ \begin{equation}\label{eq:modular}h(i\wedge j)+h(i\vee j)-h(i)-h(j)=0 \mbox{ for all $i,j$.}\end{equation}
Now if (\ref{eq:modular}) holds, (\ref{eq:modularityrelation}) holds if and only if $g(i)$ is super-modular. 
By Lemma~\ref{lem:modular}, $h$ is additive, which concludes the proof.\end{proof}

Proposition~\ref{prop:cgcan} gives a simple condition for CG distributions to be $\mtp$ in terms of their canonical characteristics. This also implies that the \emph{moment characteristics} $(p,\xi,\Sigma)$ have simple properties.

\begin{prop}\label{prop:cgmom}If a CG-distribution is $\mtp$, its moment characteristics $(p,\xi,\Sigma)$ satisfy
\begin{itemize}
\item [(i)] $p(i)$ is $\mtp$;
\item [(ii)] $\xi(i)$ is additive and non-decreasing;
\item [(iii)] $\Sigma(i)=\Sigma$ for all $i$ and all elements of $\Sigma$ are non-negative.
\end{itemize}
\end{prop}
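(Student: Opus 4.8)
The plan is to deduce this from Proposition~\ref{prop:cgcan} by translating each of the three conditions on the canonical characteristics $(g,h,K)$ into the corresponding condition on the moment characteristics $(p,\xi,\Sigma)$. Recall the standard conversion formulas between the two parametrizations of a CG-distribution (see \cite{lau96}):
$$
K(i)=\Sigma(i)^{-1},\quad h(i)=K(i)\xi(i),\quad g(i)=\log p(i)+\tfrac12\log\det K(i)-\tfrac12\xi(i)^{T}K(i)\xi(i)-\tfrac{|\Gamma|}{2}\log(2\pi).
$$
So I would assume $P$ is $\mtp$ and invoke Proposition~\ref{prop:cgcan} to get that $g$ is supermodular, $h$ is additive and non-decreasing, and $K(i)\equiv K$ is an $M$-matrix.

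The easy part is (iii): since $K(i)=K$ is constant, $\Sigma(i)=K^{-1}=\Sigma$ is constant as well, and non-negativity of the entries of $\Sigma$ follows because the inverse of an $M$-matrix has non-negative entries (this is one of the standard equivalent characterizations of $M$-matrices; alternatively, it follows from the fact already recorded in Section~\ref{sec:gaussian} that a Gaussian $\mtp$ distribution has non-negative covariance matrix, applied to the conditional $\mathcal N_\Gamma(\xi(i),\Sigma)$). For (ii): with $K$ constant, $\xi(i)=K^{-1}h(i)=\Sigma h(i)$, so $\xi$ is an invertible linear image of $h$; since $h$ is additive (i.e.\ modular by Lemma~\ref{lem:modular}) and a linear map preserves modularity, $\xi$ is modular, hence additive again by Lemma~\ref{lem:modular}. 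Monotonicity of $\xi$ then follows because $h$ is non-decreasing in each discrete coordinate and $\Sigma=K^{-1}\geq 0$ entrywise, so $\xi(j)-\xi(i)=\Sigma\,(h(j)-h(i))\geq 0$ whenever $i\leq j$.

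For (i), I need $p(i)$ to be $\mtp$, equivalently that $\log p$ is supermodular on $\cX_\Delta$. From the formula for $g$, and using that $K$ is constant so that $\tfrac12\log\det K(i)$ is a constant (modular) term, we have
$$
\log p(i)=g(i)+\tfrac12\xi(i)^{T}K\,\xi(i)+\text{const}.
$$
Since $g$ is supermodular, it suffices to show that $i\mapsto \tfrac12\xi(i)^{T}K\,\xi(i)$ is supermodular. Write $\xi(i)=\sum_{\delta}\alpha_\delta(i_\delta)$ using additivity of $\xi$; expanding the quadratic form gives $\tfrac12\sum_{\delta,\delta'}\alpha_\delta(i_\delta)^{T}K\,\alpha_{\delta'}(i_{\delta'})$. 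The diagonal terms $\delta=\delta'$ depend on a single coordinate $i_\delta$ and so are modular; the issue is the cross terms. Here I would use $\mtp$ once more, but on a cleverly chosen two-coordinate margin: by Proposition~\ref{prop:ClosedMC}(ii) every bivariate discrete margin $(X_\delta,X_{\delta'})$ is $\mtp$, and I expect this, combined with $K\geq 0$ entrywise on off-diagonal blocks being wrong in sign, to force each cross term $\alpha_\delta(\cdot)^{T}K\,\alpha_{\delta'}(\cdot)$ to itself be supermodular as a function of $(i_\delta,i_{\delta'})$. The main obstacle is precisely this step: controlling the sign/supermodularity of the cross terms in the quadratic form, since $K$ being an $M$-matrix makes its off-diagonal entries nonpositive, and one must check this interacts correctly with the monotonicity directions of the $\alpha_\delta$. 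I would handle it by reducing to two discrete variables at a time (legitimate since supermodularity is a pairwise condition on coordinates), applying Proposition~\ref{prop:cgcan} to that $\mtp$ margin to learn its $g$ is supermodular, and back-substituting to isolate the desired inequality on the cross term.
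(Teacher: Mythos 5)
Parts (ii) and (iii) of your argument are correct and essentially coincide with the paper's proof: (iii) is immediate from Proposition~\ref{prop:cgcan}, and (ii) follows from $\xi(i)=\Sigma h(i)$ together with entrywise non-negativity of $\Sigma$. The issue is part (i). The paper disposes of it in one line: $p(i)$ is the marginal distribution of the discrete coordinates $X_\Delta$, and marginals of $\mtp$ distributions are $\mtp$ by Proposition~\ref{prop:ClosedMC}(ii). You instead try to reconstruct supermodularity of $\log p$ from the conversion formula $\log p(i)=g(i)+\tfrac12\xi(i)^{T}K\xi(i)+\mathrm{const}$, and you leave the decisive step --- supermodularity of the cross terms of the quadratic form --- as an acknowledged obstacle with only a sketch of how you would attack it. As written, that sketch does not work: marginalizing a CG-distribution over some of its \emph{discrete} variables does not in general yield a CG-distribution (the conditional law of $X_\Gamma$ becomes a mixture of Gaussians), so you cannot apply Proposition~\ref{prop:cgcan} to the margin $(X_\delta,X_{\delta'},X_\Gamma)$ to ``learn that its $g$ is supermodular.'' So part (i) of the proposal has a genuine gap.

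That said, the obstacle you flagged has a direct resolution that you nearly had in hand: since $\Sigma K\Sigma=\Sigma$, one has $\xi(i)^{T}K\xi(i)=h(i)^{T}\Sigma h(i)$, and writing $h(i)=\sum_{\delta}\alpha_\delta(i_\delta)$ the second difference of the cross term in coordinates $\delta,\delta'$ is $\bigl(\Delta\alpha_\delta\bigr)^{T}\Sigma\bigl(\Delta\alpha_{\delta'}\bigr)$, which is non-negative because the increments $\Delta\alpha_\delta$ are entrywise non-negative ($h$ non-decreasing) and $\Sigma$ is entrywise non-negative (inverse of an M-matrix). Hence $\tfrac12\xi^{T}K\xi$ is supermodular and $\log p$ is a sum of supermodular functions. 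The sign worry you raised about the off-diagonal entries of $K$ evaporates once the form is expressed through $\Sigma$ rather than $K$. So your route can be completed, but it is considerably longer than the paper's marginalization argument, and in the form submitted the key inequality is asserted rather than proved.
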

\begin{proof} If the CG distribution is $\mtp$, (iii) follows directly from Proposition~\ref{prop:cgcan}. The condition (i) follows since marginals of $\mtp$ distributions are $\mtp$  and (ii) follows  from (ii) of Proposition~\ref{prop:cgcan} since $\xi(i)=\Sigma h(i)$ and $\Sigma$ has only non-negative elements.
\end{proof}
Thus, $\mtp$ CG-distributions are in particular \emph{homogeneous} --- $\Sigma(i)$ constant in $i$ --- and \emph{mean-additive} \cite{edwards,lau96}.
Note that the converse of Proposition~\ref{prop:cgmom} is not true since $\xi(i)$ can be non-increasing and $h=K\xi(i)$ decreasing, even when $K$ is  an M-matrix.

Finally we make expansions of $g(i)$ as in Section~\ref{sec:loglinear}:
\[g(i)=\sum_{D\in \mathcal{D}}\lambda_D(i),\qquad
\gamma^g_{uw}(i)=\sum_{D:\{u,w\}\subseteq D\subseteq S(i)} \lambda_{D}(i)\]
and recall that $\mathcal{I}^A=\{i: S(i)=A\}$.
We then have the following alternative formulation of Proposition~\ref{prop:cgcan}:
\begin{cor}\label{cor:cg}A CG-distribution is $\mtp$ if and only if
\begin{itemize}
\item [(i)] For all $A\subseteq V$ with $|A|\geq 2$ and all $u,w\in V$, the functions $\gamma^g_{uw}(i)$ are supermodular and non-decreasing over each $\mathcal{I}^A$;
\item [(ii)] The function $h(i)$ is additive \[h(i)=\sum_{\delta\in\Delta}\alpha_\delta(i_\delta)\] with non-decreasing components $\alpha_\delta(i_\delta)_v$;
\item [(iii)] There exists an $M$-matrix $K$ such that $K(i)=K$ for all $i$.
\end{itemize}

\end{cor}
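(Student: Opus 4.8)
The plan is to obtain Corollary~\ref{cor:cg} as a restatement of Proposition~\ref{prop:cgcan} in which the condition ``$g$ is supermodular'' is expanded in terms of interaction parameters, exactly as Theorem~\ref{thm:fuwmtp} does for a strictly positive discrete distribution. Observe first that item (iii) is literally item (iii) of Proposition~\ref{prop:cgcan}, and that item (ii) is nothing but the definition of ``$h$ additive'' written out coordinate by coordinate: since the contributions of distinct discrete variables to an additive $h(i)=\sum_{\delta\in\Delta}\alpha_\delta(i_\delta)$ do not interact, such an $h$ is non-decreasing in $i$ if and only if each component of each $\alpha_\delta$ is non-decreasing, which is (ii). (No new use of Lemma~\ref{lem:modular} is needed here; it was already invoked inside the proof of Proposition~\ref{prop:cgcan} to pass from ``$h$ modular'' to ``$h$ additive''.) Thus the real content of the corollary is the equivalence between item (i) of Proposition~\ref{prop:cgcan} and item (i) above.

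To establish that equivalence I would apply Theorem~\ref{thm:fuwmtp} to the canonical characteristic $g$. The only technical wrinkle is that $g$ is not, a priori, a probability distribution, whereas Theorem~\ref{thm:fuwmtp} is phrased for one; so I would first pass to $\tilde p(i)=\exp(g(i))/Z'$ with $Z'=\sum_{i\in\cX_\Delta}\exp(g(i))$. This $\tilde p$ is a strictly positive distribution on the finite set $\cX_\Delta$, and $g$ is supermodular on $\cX_\Delta$ if and only if $\log\tilde p$ is, because supermodularity is unchanged by adding a constant. Moreover the interaction expansions of $g$ and of $\log\tilde p$ differ only in the empty-set term $\lambda_\varnothing$, so they produce the \emph{same} functions $\gamma^g_{uw}$ (these are built solely from interaction terms indexed by sets $D\supseteq\{u,w\}$, hence of size at least two). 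Applying Theorem~\ref{thm:fuwmtp} to $\tilde p$ therefore shows that $g$ is supermodular precisely when, for every $A\subseteq\Delta$ with $|A|\geq 2$ and every pair $u,w$, the function $\gamma^g_{uw}$ is non-negative, non-decreasing and supermodular over $\cX_\Delta^A=\mathcal I^A$; this is item (i). (When $u$ or $w$ lies in $\Gamma$, or when $A\not\subseteq\Delta$, the corresponding constraint is vacuous, since $\gamma^g_{uw}$ then vanishes identically and $\mathcal I^A$ is empty, so including such cases in (i) is harmless.)

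Combining the three items with Proposition~\ref{prop:cgcan} yields the corollary. I do not expect a genuine obstacle: the statement is essentially a translation of Proposition~\ref{prop:cgcan} into the interaction language of Section~\ref{sec:loglinear}, and the only step requiring any care is the normalisation of $\exp(g)$ so that Theorem~\ref{thm:fuwmtp} becomes literally applicable.
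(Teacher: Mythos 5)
Your proof is correct and follows essentially the same route as the paper, which simply observes that the proof of Theorem~\ref{thm:fuwmtp} never uses that $\log p$ is the logarithm of a probability distribution and hence applies verbatim to the expansion of $g$; your normalisation $\tilde p=\exp(g)/Z'$, which shifts only the $\lambda_\varnothing$ term and leaves every $\gamma^g_{uw}$ unchanged, is just a clean device for making that reduction literal rather than ``by inspection of the proof.'' Note only that, as your argument correctly yields, condition (i) should also require $\gamma^g_{uw}\geq 0$ on each $\mathcal{I}^A$, exactly as in Theorem~\ref{thm:fuwmtp}.
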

\begin{proof}
The proof of Theorem~\ref{thm:fuwmtp} does not use that $\log p$ is the logarithm of a probability distribution; hence the corresponding conclusions also apply to the expansion of $g$.
\end{proof}
Note that if $i$ is binary, condition (i) of the Corollary  simplifies as in Corollary~\ref{cor:forcina} to the condition that for all $A\subseteq V$ and all $u\neq w\in A$ we have
$$
\sum_{D:\{u,w\}\subseteq D\subseteq A}\lambda_{D}\;\;\geq \;\;0.
$$

\section{Discussion}
\label{sec:discussion}
In this paper, we showed that $\mtp$ distributions enjoy many important properties related to conditional independence; in particular, an independence model generated by an $\mtp$ distribution is an upward-stable and singleton-transitive compositional semigraphoid which is faithful to its concentration graph if it has coordinate-wise connected support.

We illustrated with several examples that $\mtp$ models are useful for data analysis. The $\mtp$ constraint seems restrictive when no conditional independences are taken into account. However, the picture changes and the $\mtp$ constraint becomes less restrictive when imposing conditional independence constraints in the form of Markov properties.

An important property of $\mtp$ models, which is of practical relevance, is that the positive conditional dependence of two variables given all remaining variables implies a positive dependence given any subset of the remaining variables. This is a highly desirable feature, especially when results of follow-up empirical studies are to be compared with an earlier comprehensive study, and the studies coincide only in a subset of core variables.

More generally, in any $\mtp$ concentration graph model, dependence reversals cannot occur. This undesirable, worrying feature has been described and studied under different names depending on the types of the involved variables, for instance as \emph{near multicollinearity} for continuous variables, as the \emph{Yule-Simpson paradox} for discrete variables, or as the effects of \emph{highly unbalanced experimental designs} with explanatory discrete variables for continuous responses. It is a remarkable feature of $\mtp$ distributions that such dependence reversals are absent. 

These observations suggest that it would be desirable to develop further methods for hypothesis testing and estimation under the $\mtp$ constraint as done for the binary case in \cite{forcina2000}. Our results may also be applied not to the joint distribution of all variables, but only to the joint distribution of a subset of the variables, given a fixed set of level combinations of the remaining variables. This is particularly interesting in empirical studies, where a set of possible regressors and background variables is manipulated to make the studied groups of individuals as comparable as possible; for instance by selecting equal numbers of persons for fixed level combinations of some features, by proportional allocation of patients to treatments, by matching or by stratified  sampling. In such situations, not much can be inferred from the study results about the conditional distribution of the manipulated variables given the responses. However, the $\mtp$ property of the joint conditional distribution of the responses given the manipulated variables could be essential.

\section*{Acknowledgements} This article represents research initiated at the American Institute of Mathematics workshop ``Positivity, Graphical Models, and the Modeling of Complex Multivariate Dependencies" in October 2014. The authors thank the AIM and NSF for their support and are indebted to other participants of the workshop for useful and constructive discussions. We are especially grateful to H\'el\`ene Massam for her constructive remarks and encouragement. Thanks are also due to Heiko Becher for permission to use his data in
Example~\ref{ex:laryngeal}, and to Antonio Forcina and two anonymous referees for their helpful comments.

\clearpage
\bibliographystyle{imsart-number}
\bibliography{bib_mtp2}

\begin{thebibliography}{50}

\bibitem{ahlswede1978inequality}
\begin{barticle}[author]
\bauthor{\bsnm{Ahlswede},~\bfnm{Rudolf}\binits{R.}} \AND
  \bauthor{\bsnm{Daykin},~\bfnm{David~E}\binits{D.~E.}}
(\byear{1978}).
\btitle{An inequality for the weights of two families of sets, their unions and
  intersections}.
\bjournal{Probability Theory and Related Fields}
\bvolume{43}
\bpages{183--185}.
\end{barticle}
\endbibitem

\bibitem{ARSZ}
\begin{barticle}[author]
\bauthor{\bsnm{Allman},~\bfnm{Elizabeth~S.}\binits{E.~S.}},
  \bauthor{\bsnm{Rhodes},~\bfnm{John~A.}\binits{J.~A.}},
  \bauthor{\bsnm{Sturmfels},~\bfnm{Bernd}\binits{B.}} \AND
  \bauthor{\bsnm{Zwiernik},~\bfnm{Piotr}\binits{P.}}
(\byear{2015}).
\btitle{Tensors of nonnegative rank two}.
\bjournal{Linear Algebra Appl.}
\bvolume{473}
\bpages{37--53}.
\bdoi{10.1016/j.laa.2013.10.046}
\bmrnumber{3338324}
\end{barticle}
\endbibitem

\bibitem{bartolucci2002recursive}
\begin{barticle}[author]
\bauthor{\bsnm{Bartolucci},~\bfnm{Francesco}\binits{F.}} \AND
  \bauthor{\bsnm{Besag},~\bfnm{Julian}\binits{J.}}
(\byear{2002}).
\btitle{A recursive algorithm for {M}arkov random fields}.
\bjournal{Biometrika}
\bvolume{89}
\bpages{724--730}.
\end{barticle}
\endbibitem

\bibitem{forcina2000}
\begin{barticle}[author]
\bauthor{\bsnm{Bartolucci},~\bfnm{Francesco}\binits{F.}} \AND
  \bauthor{\bsnm{Forcina},~\bfnm{Antonio}\binits{A.}}
(\byear{2000}).
\btitle{A likelihood ratio test for {$\rm MTP\sb 2$} within binary variables}.
\bjournal{Ann. Statist.}
\bvolume{28}
\bpages{1206--1218}.
\bdoi{10.1214/aos/1015956713}
\bmrnumber{1811325 (2003b:62121)}
\end{barticle}
\endbibitem

\bibitem{B}
\begin{barticle}[author]
\bauthor{\bsnm{B{\o}lviken},~\bfnm{Erik}\binits{E.}}
(\byear{1982}).
\btitle{Probability Inequalities for the Multivariate Normal with Non-negative
  Partial Correlations}.
\bjournal{Scand.\ J.\ Statist.}
\bvolume{9}
\bpages{49--58}.
\end{barticle}
\endbibitem

\bibitem{chvatal:wu:12}
\begin{barticle}[author]
\bauthor{\bsnm{Chv{\'a}tal},~\bfnm{Va{\v{s}}ek}\binits{V.}} \AND
  \bauthor{\bsnm{Wu},~\bfnm{Baoyindureng}\binits{B.}}
(\byear{2011}).
\btitle{On {R}eichenbach's Causal Betweenness}.
\bjournal{Erkenntnis}
\bvolume{76}
\bpages{41--48}.
\bdoi{10.1007/s10670-011-9321-z}
\end{barticle}
\endbibitem

\bibitem{colangelo2005some}
\begin{barticle}[author]
\bauthor{\bsnm{Colangelo},~\bfnm{Antonio}\binits{A.}},
  \bauthor{\bsnm{Scarsini},~\bfnm{Marco}\binits{M.}} \AND
  \bauthor{\bsnm{Shaked},~\bfnm{Moshe}\binits{M.}}
(\byear{2005}).
\btitle{Some notions of multivariate positive dependence}.
\bjournal{Insurance Math.\ Econom.}
\bvolume{37}
\bpages{13--26}.
\end{barticle}
\endbibitem

\bibitem{daw79}
\begin{barticle}[author]
\bauthor{\bsnm{Dawid},~\bfnm{A.~P.}\binits{A.~P.}}
(\byear{1979}).
\btitle{Conditional independence in statistical theory (with discussion)}.
\bjournal{J. Roy. Stat. Soc. Ser. B}
\bvolume{41}
\bpages{1--31}.
\end{barticle}
\endbibitem

\bibitem{dawid:80}
\begin{barticle}[author]
\bauthor{\bsnm{Dawid},~\bfnm{A.~P.}\binits{A.~P.}}
(\byear{1980}).
\btitle{Conditional Independence for Statistical Operations}.
\bjournal{Annals of Statistics}
\bvolume{8}
\bpages{598--617}.
\end{barticle}
\endbibitem

\bibitem{dawid:lauritzen:93}
\begin{barticle}[author]
\bauthor{\bsnm{Dawid},~\bfnm{A.~Philip}\binits{A.~P.}} \AND
  \bauthor{\bsnm{Lauritzen},~\bfnm{Steffen~Lilholt}\binits{S.~L.}}
(\byear{1993}).
\btitle{Hyper {M}arkov Laws in the Statistical Analysis of Decomposable
  Graphical Models}.
\bjournal{Ann. Statist.}
\bvolume{21}
\bpages{1272--1317}.
\end{barticle}
\endbibitem

\bibitem{djolonga2015scalable}
\begin{barticle}[author]
\bauthor{\bsnm{Djolonga},~\bfnm{Josip}\binits{J.}} \AND
  \bauthor{\bsnm{Krause},~\bfnm{Andreas}\binits{A.}}
(\byear{2015}).
\btitle{Scalable Variational Inference in Log-supermodular Models}.
\bjournal{arXiv preprint arXiv:1502.06531}.
\end{barticle}
\endbibitem

\bibitem{edwards}
\begin{bbook}[author]
\bauthor{\bsnm{Edwards},~\bfnm{David}\binits{D.}}
(\byear{2000}).
\btitle{Introduction to {G}raphical {M}odelling},
\bedition{second} ed.
\bseries{Springer Texts in Statistics}.
\bpublisher{Springer-Verlag, New York}.
\bdoi{10.1007/978-1-4612-0493-0}
\bmrnumber{1880319}
\end{bbook}
\endbibitem

\bibitem{esary1967association}
\begin{barticle}[author]
\bauthor{\bsnm{Esary},~\bfnm{James~D}\binits{J.~D.}},
  \bauthor{\bsnm{Proschan},~\bfnm{Frank}\binits{F.}} \AND
  \bauthor{\bsnm{Walkup},~\bfnm{David~W}\binits{D.~W.}}
(\byear{1967}).
\btitle{Association of random variables, with applications}.
\bjournal{Ann. Math. Stat.}
\bvolume{38}
\bpages{1466--1474}.
\end{barticle}
\endbibitem

\bibitem{fortuin1971correlation}
\begin{barticle}[author]
\bauthor{\bsnm{Fortuin},~\bfnm{Cees~M}\binits{C.~M.}},
  \bauthor{\bsnm{Kasteleyn},~\bfnm{Pieter~W}\binits{P.~W.}} \AND
  \bauthor{\bsnm{Ginibre},~\bfnm{Jean}\binits{J.}}
(\byear{1971}).
\btitle{Correlation inequalities on some partially ordered sets}.
\bjournal{Comm. Math. Phys.}
\bvolume{22}
\bpages{89--103}.
\end{barticle}
\endbibitem

\bibitem{GMsymms}
\begin{barticle}[author]
\bauthor{\bsnm{H{\o}jsgaard},~\bfnm{S{\o}ren}\binits{S.}} \AND
  \bauthor{\bsnm{Lauritzen},~\bfnm{Steffen~L.}\binits{S.~L.}}
(\byear{2008}).
\btitle{Graphical {G}aussian models with edge and vertex symmetries}.
\bjournal{J. R. Stat. Soc. Ser. B}
\bvolume{70}
\bpages{1005--1027}.
\bdoi{10.1111/j.1467-9868.2008.00666.x}
\bmrnumber{2530327}
\end{barticle}
\endbibitem

\bibitem{holland1986}
\begin{barticle}[author]
\bauthor{\bsnm{Holland},~\bfnm{Paul~W.}\binits{P.~W.}} \AND
  \bauthor{\bsnm{Rosenbaum},~\bfnm{Paul~R.}\binits{P.~R.}}
(\byear{1986}).
\btitle{Conditional Association and Unidimensionality in Monotone Latent
  Variable Models}.
\bjournal{Ann. Statist.}
\bvolume{14}
\bpages{1523--1543}.
\bdoi{10.1214/aos/1176350174}
\end{barticle}
\endbibitem

\bibitem{joag-dev1983}
\begin{barticle}[author]
\bauthor{\bsnm{Joag-Dev},~\bfnm{Kumar}\binits{K.}}
(\byear{1983}).
\btitle{Independence Via Uncorrelatedness Under Certain Dependence Structures}.
\bjournal{Ann. Probab.}
\bvolume{11}
\bpages{1037--1041}.
\bdoi{10.1214/aop/1176993452}
\end{barticle}
\endbibitem

\bibitem{Joe20062177}
\begin{barticle}[author]
\bauthor{\bsnm{Joe},~\bfnm{Harry}\binits{H.}}
(\byear{2006}).
\btitle{Generating random correlation matrices based on partial correlations}.
\bjournal{J. Multiv. Anal.}
\bvolume{97}
\bpages{2177 - 2189}.
\bdoi{http://dx.doi.org/10.1016/j.jmva.2005.05.010}
\end{barticle}
\endbibitem

\bibitem{jones:west:92}
\begin{barticle}[author]
\bauthor{\bsnm{Jones},~\bfnm{Beatrix}\binits{B.}} \AND
  \bauthor{\bsnm{West},~\bfnm{Mike}\binits{M.}}
(\byear{2005}).
\btitle{Covariance Decomposition in Undirected {G}aussian Graphical Models}.
\bjournal{Biometrika}
\bvolume{92}
\bpages{779-786}.
\bdoi{10.1093/biomet/92.4.779}
\end{barticle}
\endbibitem

\bibitem{KarlinRinott80}
\begin{barticle}[author]
\bauthor{\bsnm{Karlin},~\bfnm{Samuel}\binits{S.}} \AND
  \bauthor{\bsnm{Rinott},~\bfnm{Yosef}\binits{Y.}}
(\byear{1980}).
\btitle{Classes of Orderings of Measures and Related Correlation Inequalities.
  {I}. {M}ultivariate Totally Positive Distributions}.
\bjournal{J. Multiv. Anal.}
\bvolume{10}
\bpages{467--498}.
\bdoi{10.1016/0047-259X(80)90065-2}
\bmrnumber{599685 (83j:60020a)}
\end{barticle}
\endbibitem

\bibitem{karlinGaussian}
\begin{barticle}[author]
\bauthor{\bsnm{Karlin},~\bfnm{Samuel}\binits{S.}} \AND
  \bauthor{\bsnm{Rinott},~\bfnm{Yosef}\binits{Y.}}
(\byear{1983}).
\btitle{M-Matrices as covariance matrices of multinormal distributions}.
\bjournal{Linear Algebra Appl.}
\bvolume{52}
\bpages{419 - 438}.
\bdoi{http://dx.doi.org/10.1016/0024-3795(83)80027-5}
\end{barticle}
\endbibitem

\bibitem{lau96}
\begin{bbook}[author]
\bauthor{\bsnm{Lauritzen},~\bfnm{S.~L.}\binits{S.~L.}}
(\byear{1996}).
\btitle{Graphical Models}.
\bpublisher{Clarendon Press}, \baddress{Oxford, United Kingdom}.
\end{bbook}
\endbibitem

\bibitem{lebowitz1972bounds}
\begin{barticle}[author]
\bauthor{\bsnm{Lebowitz},~\bfnm{Joel~L}\binits{J.~L.}}
(\byear{1972}).
\btitle{Bounds on the correlations and analyticity properties of ferromagnetic
  {I}sing spin systems}.
\bjournal{Comm. Math. Phys.}
\bvolume{28}
\bpages{313--321}.
\end{barticle}
\endbibitem

\bibitem{lehmann1966}
\begin{barticle}[author]
\bauthor{\bsnm{Lehmann},~\bfnm{E.~L.}\binits{E.~L.}}
(\byear{1966}).
\btitle{Some Concepts of Dependence}.
\bjournal{Ann. Math. Statist.}
\bvolume{37}
\bpages{1137--1153}.
\bdoi{10.1214/aoms/1177699260}
\end{barticle}
\endbibitem

\bibitem{mardia1979}
\begin{bbook}[author]
\bauthor{\bsnm{Mardia},~\bfnm{Kantilal~Varichand}\binits{K.~V.}},
  \bauthor{\bsnm{Kent},~\bfnm{John~T.}\binits{J.~T.}} \AND
  \bauthor{\bsnm{Bibby},~\bfnm{John~M.}\binits{J.~M.}}
(\byear{1979}).
\btitle{Multivariate {A}nalysis}.
\bpublisher{Academic Press [Harcourt Brace Jovanovich, Publishers], London-New
  York-Toronto, Ont.}
\bnote{Probability and Mathematical Statistics: A Series of Monographs and
  Textbooks}.
\bmrnumber{560319 (81h:62003)}
\end{bbook}
\endbibitem

\bibitem{sanmartin2005}
\begin{barticle}[author]
\bauthor{\bsnm{Mart{\'\i}n},~\bfnm{Ernesto~San}\binits{E.~S.}},
  \bauthor{\bsnm{Mouchart},~\bfnm{Michel}\binits{M.}} \AND
  \bauthor{\bsnm{Rolin},~\bfnm{Jean-Marie}\binits{J.-M.}}
(\byear{2005}).
\btitle{Ignorable Common Information, Null Sets and Basu's First Theorem}.
\bjournal{Sankhy\=a: The Indian Journal of Statistics}
\bvolume{67}
\bpages{674-698}.
\end{barticle}
\endbibitem

\bibitem{muller2005archimedean}
\begin{barticle}[author]
\bauthor{\bsnm{M{\"u}ller},~\bfnm{Alfred}\binits{A.}} \AND
  \bauthor{\bsnm{Scarsini},~\bfnm{Marco}\binits{M.}}
(\byear{2005}).
\btitle{Archimedean copulae and positive dependence}.
\bjournal{Journal of Multivariate Analysis}
\bvolume{93}
\bpages{434--445}.
\end{barticle}
\endbibitem

\bibitem{newman1983general}
\begin{barticle}[author]
\bauthor{\bsnm{Newman},~\bfnm{Charles~M}\binits{C.~M.}}
(\byear{1983}).
\btitle{A general central limit theorem for {FKG} systems}.
\bjournal{Comm. Math. Phys.}
\bvolume{91}
\bpages{75--80}.
\end{barticle}
\endbibitem

\bibitem{Newman1984}
\begin{barticle}[author]
\bauthor{\bsnm{Newman},~\bfnm{Charles~M.}\binits{C.~M.}}
(\byear{1984}).
\btitle{Asymptotic Independence and Limit Theorems for Positively and
  Negatively Dependent Random Variables}.
\bjournal{Lecture Notes-Monograph Series}
\bvolume{5}
\bpages{127-140}.
\end{barticle}
\endbibitem

\bibitem{O}
\begin{barticle}[author]
\bauthor{\bsnm{Ostrowski},~\bfnm{Alexander}\binits{A.}}
(\byear{1937}).
\btitle{{\"U}ber die {D}eterminanten mit {\"u}berwiegender {H}auptdiagonale}.
\bjournal{Commentarii Mathematici Helvetici}
\bvolume{10}
\bpages{69--96}.
\end{barticle}
\endbibitem

\bibitem{pea88}
\begin{bbook}[author]
\bauthor{\bsnm{Pearl},~\bfnm{J.}\binits{J.}}
(\byear{1988}).
\btitle{Probabilistic Reasoning in Intelligent Systems: Networks of Plausible
  Inference}.
\bpublisher{Morgan Kaufmann Publishers}, \baddress{San Mateo, CA, USA}.
\end{bbook}
\endbibitem

\bibitem{Pearl:87}
\begin{bincollection}[author]
\bauthor{\bsnm{Pearl},~\bfnm{Judea}\binits{J.}} \AND
  \bauthor{\bsnm{Paz},~\bfnm{Azaria}\binits{A.}}
(\byear{1987}).
\btitle{{Graphoids: A Graph Based Logic for Reasoning about Relevancy
  Relations}}.
In \bbooktitle{Advances in Artificial Intelligence},
(\beditor{\bfnm{B.~D.}\binits{B.~D.}~\bsnm{Boulay}},
  \beditor{\bfnm{D.}\binits{D.}~\bsnm{Hogg}} \AND
  \beditor{\bfnm{L.}\binits{L.}~\bsnm{Steel}}, eds.)
\bvolume{II}
\bpages{357--363}.
\bpublisher{North-Holland}, \baddress{Amsterdam}.
\end{bincollection}
\endbibitem

\bibitem{peters:15}
\begin{barticle}[author]
\bauthor{\bsnm{Peters},~\bfnm{Jonas}\binits{J.}}
(\byear{2015}).
\btitle{On the Intersection Property of Conditional Independence and its
  Application to Causal Discovery}.
\bjournal{J. Causal Inference}
\bvolume{3}
\bpages{97--108}.
\end{barticle}
\endbibitem

\bibitem{propp1996exact}
\begin{barticle}[author]
\bauthor{\bsnm{Propp},~\bfnm{James~Gary}\binits{J.~G.}} \AND
  \bauthor{\bsnm{Wilson},~\bfnm{David~Bruce}\binits{D.~B.}}
(\byear{1996}).
\btitle{Exact sampling with coupled {M}arkov chains and applications to
  statistical mechanics}.
\bjournal{Random Structures Algorithms}
\bvolume{9}
\bpages{223--252}.
\end{barticle}
\endbibitem

\bibitem{reichenbach:56}
\begin{bbook}[author]
\bauthor{\bsnm{Reichenbach},~\bfnm{Hans}\binits{H.}}
(\byear{1956}).
\btitle{The Direction of Time}.
\bpublisher{University of California Press}, \baddress{Berkeley, CA.}
\end{bbook}
\endbibitem

\bibitem{RAS}
\begin{barticle}[author]
\bauthor{\bsnm{Royston},~\bfnm{Patrick}\binits{P.}},
  \bauthor{\bsnm{Altman},~\bfnm{Douglas~G}\binits{D.~G.}} \AND
  \bauthor{\bsnm{Sauerbrei},~\bfnm{Willi}\binits{W.}}
(\byear{2006}).
\btitle{Dichotomizing continuous predictors in multiple regression: a bad
  idea}.
\bjournal{Stat Med}
\bvolume{25}
\bpages{127--141}.
\end{barticle}
\endbibitem

\bibitem{sadl14}
\begin{barticle}[author]
\bauthor{\bsnm{Sadeghi},~\bfnm{Kayvan}\binits{K.}} \AND
  \bauthor{\bsnm{Lauritzen},~\bfnm{Steffen~L.}\binits{S.~L.}}
(\byear{2014}).
\btitle{Markov properties for mixed graphs}.
\bjournal{Bernoulli}
\bvolume{20}
\bpages{676--696}.
\end{barticle}
\endbibitem

\bibitem{sarkar}
\begin{btechreport}[author]
\bauthor{\bsnm{Sarkar},~\bfnm{T.~K.}\binits{T.~K.}}
(\byear{1969}).
\btitle{Some lower bounds of reliability}
\btype{{Tech.~Report, No.~124}},
\bpublisher{Department of Operations Research and Department of Statistics,
  Stanford University}.
\end{btechreport}
\endbibitem

\bibitem{slawski2015estimation}
\begin{barticle}[author]
\bauthor{\bsnm{Slawski},~\bfnm{Martin}\binits{M.}} \AND
  \bauthor{\bsnm{Hein},~\bfnm{Matthias}\binits{M.}}
(\byear{2015}).
\btitle{Estimation of positive definite {M}-matrices and structure learning for
  attractive {G}aussian {M}arkov random field}.
\bjournal{Linear Algebra Appl.}
\bvolume{473}
\bpages{145--179}.
\end{barticle}
\endbibitem

\bibitem{Steel20091141}
\begin{barticle}[author]
\bauthor{\bsnm{Steel},~\bfnm{Mike}\binits{M.}} \AND
  \bauthor{\bsnm{Faller},~\bfnm{Be{\'a}ta}\binits{B.}}
(\byear{2009}).
\btitle{Markovian log-supermodularity, and its applications in phylogenetics}.
\bjournal{Appl.\ Math.\ Lett.}
\bvolume{22}
\bpages{1141 - 1144}.
\bdoi{http://dx.doi.org/10.1016/j.aml.2008.10.005}
\end{barticle}
\endbibitem

\bibitem{stu05}
\begin{bbook}[author]
\bauthor{\bsnm{Studen{\'y}},~\bfnm{M.}\binits{M.}}
(\byear{2005}).
\btitle{Probabilistic Conditional Independence Structures}.
\bpublisher{Springer-Verlag}, \baddress{London, United Kingdom}.
\end{bbook}
\endbibitem

\bibitem{SZP}
\begin{bbook}[author]
\bauthor{\bsnm{\v{S}tembera},~\bfnm{Z.}\binits{Z.}},
  \bauthor{\bsnm{Znamen\'{a}\v{c}ek},~\bfnm{K.}\binits{K.}} \AND
  \bauthor{\bsnm{Pol\'{a}\v{c}ek},~\bfnm{K.}\binits{K.}}
(\byear{2012}).
\btitle{High risk pregnancy and child}.
\bpublisher{Springer Science \& Business Media}.
\end{bbook}
\endbibitem

\bibitem{W12}
\begin{barticle}[author]
\bauthor{\bsnm{Wermuth},~\bfnm{Nanny}\binits{N.}}
(\byear{2012}).
\btitle{Traceable regressions}.
\bjournal{Int.\ Stat.\ Rev.}
\bvolume{80}
\bpages{415--438}.
\end{barticle}
\endbibitem

\bibitem{W15}
\begin{bincollection}[author]
\bauthor{\bsnm{Wermuth},~\bfnm{Nanny}\binits{N.}}
(\byear{2015}).
\btitle{Graphical {M}arkov Models, unifying results and their interpretation.}
In \bbooktitle{Wiley Statsref: Statistics Reference Online; also on
  arXiv:1505.02456}.
\bdoi{10.1002/9781118445112.stat06429}
\end{bincollection}
\endbibitem

\bibitem{WerCox98}
\begin{barticle}[author]
\bauthor{\bsnm{Wermuth},~\bfnm{Nanny}\binits{N.}} \AND
  \bauthor{\bsnm{Cox},~\bfnm{David~R.}\binits{D.~R.}}
(\byear{1998}).
\btitle{On Association Models Defined over Independence Graphs}.
\bjournal{Bernoulli}
\bvolume{4}
\bpages{477-495}.
\end{barticle}
\endbibitem

\bibitem{wermuth2014star}
\begin{barticle}[author]
\bauthor{\bsnm{Wermuth},~\bfnm{Nanny}\binits{N.}} \AND
  \bauthor{\bsnm{Marchetti},~\bfnm{Giovanni~M.}\binits{G.~M.}}
(\byear{2014}).
\btitle{Star graphs induce tetrad correlations: for {G}aussian as well as for
  binary variables}.
\bjournal{Electron.\ J.\ Stat.}
\bvolume{8}
\bpages{253--273}.
\end{barticle}
\endbibitem

\bibitem{whittaker1990}
\begin{bbook}[author]
\bauthor{\bsnm{Whittaker},~\bfnm{Joe}\binits{J.}}
(\byear{1990}).
\btitle{Graphical Models in Applied Multivariate Statistics}.
\bseries{Wiley Series in Probability and Mathematical Statistics}.
\bpublisher{John Wiley \& Sons, Ltd., Chichester}.
\bmrnumber{1112133 (93f:62002)}
\end{bbook}
\endbibitem

\bibitem{XMG}
\begin{barticle}[author]
\bauthor{\bsnm{Xie},~\bfnm{XC}\binits{X.}},
  \bauthor{\bsnm{Ma},~\bfnm{ZM}\binits{Z.}} \AND
  \bauthor{\bsnm{Geng},~\bfnm{Zhi}\binits{Z.}}
(\byear{2008}).
\btitle{Some association measures and their collapsibility}.
\bjournal{Statist.\ Sinica}
\bvolume{18}
\bpages{1165--1183}.
\end{barticle}
\endbibitem

\bibitem{Zatonski}
\begin{barticle}[author]
\bauthor{\bsnm{Zato\'{n}ski},~\bfnm{Witold}\binits{W.}},
  \bauthor{\bsnm{Becher},~\bfnm{Heiko}\binits{H.}},
  \bauthor{\bsnm{Lissowska},~\bfnm{Jolanta}\binits{J.}} \AND
  \bauthor{\bsnm{Wahrendorf},~\bfnm{J{\"u}rgen}\binits{J.}}
(\byear{1991}).
\btitle{Tobacco, alcohol, and diet in the etiology of laryngeal cancer: a
  population-based case-control study}.
\bjournal{Cancer Causes \& Control}
\bvolume{2}
\bpages{3--10}.
\end{barticle}
\endbibitem

\bibitem{LTbook}
\begin{bbook}[author]
\bauthor{\bsnm{Zwiernik},~\bfnm{Piotr}\binits{P.}}
(\byear{2015}).
\btitle{{Semialgebraic Statistics and Latent Tree Models}}.
\bseries{Monographs on Statistics and Applied Probability}
\bvolume{146}.
\bpublisher{Chapman \& Hall}.
\end{bbook}
\endbibitem

\end{thebibliography}

\end{document}